\providecommand{\customgenericname}{}
\newcommand{\newcustomtheorem}[2]{%
	\newenvironment{#1}[1]
	{%
		\renewcommand\customgenericname{#2}%
		\renewcommand\theinnercustomgeneric{##1}%
		\innercustomgeneric
	}
	{\endinnercustomgeneric}
}
\theoremstyle{plain}
\newtheorem{theorem}{Theorem}[section]
\newtheorem{lemma}[theorem]{Lemma}
\newtheorem{corollary}[theorem]{Corollary}
\numberwithin{equation}{section}
\newtheorem{definition}[theorem]{Definition}
\newtheorem{proposition}[theorem]{Proposition}
\newtheorem{remark}[theorem]{Remark}
\theoremstyle{definition}
\newtheorem{example}[theorem]{Example}
\theoremstyle{remark}
\newcommand{\CC}{\mathbb{C}}
\newcommand{\QQ}{\mathbb{Q}}
\newcommand{\RR}{\mathbb{R}}
\newcommand{\ZZ}{\mathbb{Z}}
\newcommand{\NN}{\mathbb{N}}
\newcommand{\WG}{\mbox{WGr}}
\newcommand{\G}{\mbox{Gr}}
\newcommand{\WW}{\mathbb{W}}
\begin{document}

\title[Cohomologies of weighted Grassmann orbifolds]{Integral generalized equivariant cohomologies of weighted Grassmann orbifolds}

\author[K Brahma]{Koushik Brahma}
\address{Department of Mathematics, Indian Institute of Technology Madras, India}
\email{koushikbrahma95@gmail.com}

\author[S. Sarkar]{Soumen Sarkar}
\address{Department of Mathematics, Indian Institute of Technology Madras, India}
\email{soumen@iitm.ac.in}

\subjclass[2020]{14M15, 57R18, 55N91, 19L47, 57R85}

\keywords{Weighted Grassmann orbifold, $q$-cell structure, divisive weighted Grassmann orbifold, equivariant cohomology ring, equivariant $K$-theory ring, equivariant cobordism ring, weighted structure constant.}

\date{\today}
\dedicatory{}

\abstract  
We introduce a new definition of weighted Grassmann orbifolds. We study their several invariant $q$-cell structures and the orbifold singularities on these $q$-cells. We discuss when the integral cohomology of a weighted Grassmann orbifold has no $p$-torsion. We compute the equivariant $K$-theory ring of weighted Grassmann orbifolds with rational coefficients. We introduce divisive weighted Grassmann orbifolds and show that they have invariant cell structures. We calculate the equivariant cohomology ring, equivariant $K$-theory ring and equivariant cobordism ring of a divisive weighted Grassmann orbifold with integer coefficients. We discuss how to compute the weighted structure constants for the integral equivariant cohomology ring of a divisive weighted Grassmann orbifold.
\endabstract

\maketitle


\section{Introduction}
We consider the $n$-dimensional complex vector space $\CC^n$ and a positive integer $d$ satisfying $1\leq d < n$. Then the set of all $d$-dimensional vector subspaces of $\CC^n$ is called a (complex) Grassmann manifold and denoted by $\G(d, n)$. In particular, the space $\G (1, n)$ is called the $(n-1)$-dimensional complex projective space. The space $\G(d, n)$ has a manifold structure of dimension ${d(n-d)}$, see \cite[Chapter 1]{Muk}. This is a projective variety via the Pl\"ucker embedding. The natural $(\CC^*)^n$-action on $\CC^n$ induces a $(\CC^*)^n$-action on $\G(d, n)$. Grassmann manifolds are central objects of study in algebraic geometry, algebraic topology and differential geometry. Several interesting topological and geometrical properties of Grassmann manifolds can be found in \cite{Lak, KnTa, JiPe}. 

The orbifold version of a complex projective space was introduced in \cite{Ka} and was called a twisted projective space. Orbifolds, a generalization of manifolds, were introduced by Satake \cite{Sat2,Sa} with the name $V$-manifolds. Later, Thurston \cite{Thu} used the terminology orbifolds instead. In the past two decades, several development have been appeared to study orbifolds arising in algebraic geometry, differential geometry and string topology. Some cohomology theories such as the de Rham cohomology \cite[Chapter 2]{ALR}, the singular cohomology \cite{Hat}, the Dolbeault cohomology \cite{Bai}, Chen-Ruan cohomology ring \cite{CR} and orbifold $K$-theory \cite[Chapter 3]{ALR} for a class of orbifolds were studied either with rational, real or, complex coefficients. One can construct a CW-complex structure on an effective orbifold following \cite{Gor}. However, in general, the computation of the singular integral cohomology of an orbifold is considerably difficult.

Let $G$ be a topological group and $X$ a $G$-space. Then the equivariant map $X \longrightarrow \{pt\}$ induces a graded $\mathcal{E}_{G}^*(pt)$-algebra structure on $\mathcal{E}_{G}^*(X)$. The readers are referred to \cite{May} for
the definitions and several results on the $G$-equivariant generalized cohomology theory $\mathcal{E}_{G}^*$. If $\mathcal{E}_{G}^*=H_G^*$, then it is known as the equivariant cohomology theory defined by $$H_{G}^*(X):=H^*(EG\times_GX).$$ The ring $H_G^{*}(X)$ is called the Borel equivariant cohomology of $X$. If  $\mathcal{E}_{G}^*=K_G^*$, then it is known as the equivariant $K$-theory. If $X$ is compact then $K_G^0(X)$ is the equivalence classes of $G$-equivariant complex vector bundles on $X$ \cite{Segal}. If $X$ is a point with trivial action, then $K_G^*(pt)$ is isomorphic to $R(G)[z,z^{-1}]$ where $R(G)$ is complex representation ring of $G$ and $z$ is the Bott element of cohomological dimension $-2$. The $G$-equivariant ring $MU_G^*(X)$ is known as equivariant complex cobordism ring see \cite{TD}. Sinha \cite{Sinha} and Hanke \cite{Han} have shown several development on $MU_G^*$. However, many interesting questions on $MU_G^*(X)$ remain undetermined. For example, $MU_G^*(pt)$ is not completely known for non-trivial groups $G$.

 Corti and Reid \cite{Core} introduced the weighted projective analogs of Grassmann manifolds and called them weighted Grassmannians. Then Abe and Matsumura \cite{AbMa1} defined them explicitly and studied the equivariant cohomology ring of weighted Grassmannians with rational coefficients. The weighted Grassmannians are projective varieties with orbifold singularities. The simplest weighted Grassmannians are the weighted projective spaces. Kawasaki \cite{Ka} proved that the integral cohomology of weighted projective spaces have no torsion and is concentrated in even degrees. The equivariant cohomology ring of a weighted projective space has been studied in \cite{BFR} in terms of piecewise polynomials. The equivariant $K$-theory and equivariant cobordism rings of divisive weighted projective spaces have been discussed in \cite{HHRW} in terms of piecewise Laurent polynomials and piecewise cobordism forms respectively.

  Inspired by the above works, we introduce a different definition of weighted Grassmann orbifolds and study their several topological properties such as torsion in the integral cohomology, equivariant cohomology ring, equivariant $K$-theory ring and equivariant cobordism ring with integer coefficients. We note that \cite{Core, AbMa1}  used the name `weighted Grassmannians'. However, keeping other naming in mind like Milnor manifolds and Seifert manifolds, we prefer to use Grassmann manifolds and weighted Grassmann orbifolds.

The paper is organized as follows. In Section \ref{sec_weighted_grasm}, analogously to the definition of Grassmann manifold discussed in \cite{Muk}, we introduce another definition of a weighted Grassmann orbifold $\WG (d,n)$ for $d < n$, a `weight vector' $W:=(w_1, \ldots, w_n) \in (\ZZ_{\geq 0})^n$ and $a \in \ZZ_{\geq 1}$. Interestingly, this definition is equivalent to the previous one appeared in \cite{AbMa1}.  We recall the definition of Schubert symbols for $d<n$ and discuss how to get a total ordering on the Schubert symbols. Using this total order we show that there is a `weighted Pl\"{u}cker embedding' from a weighted Grassmann orbifold to a weighted projective space 
see Lemma \ref{lem_pul_emb}. We describe a $q$-cell structure of $\WG (d,n)$ in Proposition \ref{lem_q_cell_grasm}.
Then we discuss a $(\CC^*)^n$-invariant filtration 
 \begin{equation*}
 	\{pt\}=X_0 \subset X_1 \subset X_2 \subset \dots \subset X_m=\WG(d,n)
 \end{equation*} 
 of $\WG (d,n)$  using the $q$-cell decomposition, where $m:={n \choose d}-1$. Here, we consider $q$-cell structure in the sense of \cite[Section 4]{PS}. We note that one may get different $q$-cell structures depending on the choice of the total orderings on the set of all Schubert symbols for $d < n$. Accordingly, one may obtain different $(\CC^*)^n$-invariant filtration of $\WG (d,n)$.

In Section \ref{sec_int_cohom_grassmann}, first we recall that there is an equivariant homeomorphism from $\mathbb{W} P(rc_0, rc_1, \ldots, rc_m)$ to $\mathbb{W} P(c_0, c_1, \ldots, c_m)$ for any $1 \leq r\in \NN$. Using this technique, we show how the orbifold singularity on a $q$-cell of some subcomplexes of $\WG (d,n)$ can be reduced, see Lemma \ref{thm_red_loc_gp}. Consequently, we get a new $q$-cell structure of these subcomplexes including $\WG (d,n)$ possibly with less singularity on each $q$-cell, see Theorem \ref{prop_cell_after_red}. We show in Theorem \ref{lem_euiv_sn} that two weighted Grassmann orbifolds are weakly equivariantly homeomorphic if their weight vectors differ by a permutation $\sigma\in S_n$. We define `admissible permutation' $\sigma\in S_n$ for a prime $p$ and $\WG(d,n)$, see Definition \ref{def_adm_per}. The following result says when $H^{*}({\rm WGr} (d,n);\mathbb{Z})$ has no $p$-torsion. 

\begin{customthm}{A}[Theorem \ref{p torsion in permuted Grassmann}]
If there exists an admissible permutation ${\sigma\in S_n}$ for a prime $p$ and ${\rm WGr}(d,n)$ then $H^{odd}({\rm WGr} (d,n);\mathbb{Z}_p)$ is trivial and $H^{*}({\rm WGr} (d,n);\mathbb{Z})$ has no $p$-torsion.
\end{customthm}

We introduce `divisive'  weighted Grassmann orbifolds. We note that this definition coincides with the concept of divisive weighted projective space of \cite{HHRW} when $1=d<n$. We prove the following.

 \begin{customthm}{B}[Theorem \ref{thm:tinv_cell}]
 	If ${\rm WGr}(d,n)$ is a divisive weighted Grassmann orbifold then it has a $(\CC^*)^n$-invariant cell structure with only even dimensional cells. Moreover, the $(\CC^*)^n$-action on these cells can be described explicitly.
 \end{customthm}

 This result implies that the integral cohomology of a divisive weighted Grassmann orbifold has no torsion and is concentrated in even degrees. We discuss a class of non-trivial  examples of divisive weighted Grassmann orbifolds. We remark that the weighted Grassmann orbifold in Example \ref{ex:no_torsion2} is not divisive. However, its integral cohomology has no torsion.

In Section \ref{sec_kth_cobth}, we show that the $(\CC^*)^n$-invariant stratification $$\{pt\}= X_0 \subset X_1 \subset \cdots \subset X_m = \WG(d,n)$$ has the following property. The quotient $X_i/X_{i-1}$ is homeomorphic to the Thom space of an orbifold $(\CC^*)^n$-bundle $$\xi^i \colon \CC^{\ell(\lambda^i)}/G_i \to \{pt\}$$ for some $\ell(\lambda^i)\in(\ZZ_{\geq 1})$ and finite groups $G_i$ for $i=1, \ldots, m$, see Proposition \ref{thm:q-cw_srtucture}. We compute the equivariant $K$-theory ring of any weighted Grassmann orbifolds with rational coefficients, see Theorem \ref{prop_GKM-description}. If $\WG(d,n)$ is divisive  then $G_i$ is trivial for $i=1, \ldots, m$. Then considering $T^n:=(S^1)^n\subset (\CC^*)^n$, the following result describes the integral equivariant cohomology of certain weighted Grassmann orbifolds.
\begin{customthm}{C}[Theorem \ref{prop_GKM-description_Z}]
	Let ${\rm WGr} (d,n)$ be a divisive weighted Grassmann orbifold for $d <n$. Then the generalized $T^n$-equivariant cohomology with integer coefficient $\mathcal{E}^\ast_{T^n}({\rm WGr}(d,n); \ZZ)$ can be given by
	\begin{equation*}
		 \Big \{(f_i) \in \bigoplus_{i=0}^{m} \mathcal{E}^*_{T^n}(pt; \ZZ) ~ \big{|} ~ e_{T^n}(\xi^{ij})~
		\mbox{divides} ~ f_i - f_j ~\mbox{for }~ j < i~\mbox{and}~  |\lambda^j\cap \lambda^i|=d-1\Big\}
	\end{equation*}
for $\mathcal{E}^\ast_{T^n}= H^\ast_{T^n}, K^*_{T^n}$ and $MU^*_{T^n}$.
	\end{customthm}

The computation of $e_{T^n}(\xi^{ij})$ is discussed in \eqref{eq_eu_cl}. We compute the  equivariant cohomology ring of some weighted Grassmann orbifold with integer coefficients which are not divisive, see Theorem \ref{th_eq_coh_non_div}. For $m\geq 2$, corresponding to each pair of positive integers $(n,d)$ such that $d<n$ and $m+1={n \choose d}$ we have a $T^n$-action on $\WW P(c_0,c_1,\dots,c_m)$. For each pair $(n,d)$, we discuss the generalized $T^n$-equivariant cohomology of a divisive $\WW P(c_0,c_1,\dots,c_m)$ with integer coefficients, see Theorem \ref{thm_eq_coh_pr_sp}.

In Section \ref{sec:schub_calc}, we show that there exist equivariant Schubert classes $\{w\widetilde{S}_{\lambda^i}\}_{i=0}^m$ which form a basis for the integral $T^n$-equivariant cohomology of a divisive weighted Grassmann orbifold, see Proposition \ref{prop_schub_bas}. We study some properties of weighted structure constant, see Lemma \ref{lem_st_con}. Then we show the following multiplication rule.

\begin{customprop}{D}[Proposition \ref{prop_pieri_rull}][Weighted Pieri rule]
	$$w\widetilde{S}_{\lambda^1}~w\widetilde{S}_{\lambda^j}=({w\widetilde{S}_{\lambda^1}}|_{\lambda^j})~w\widetilde{S}_{\lambda^j}+\sum_{\lambda^i\to\lambda^j}\dfrac{c_0}{c_j}~w\widetilde{S}_{\lambda^i}.$$
\end{customprop}
 \noindent Moreover, we deduce a recurrence relation which helps to compute the weighted structure constants $\{wc_{ij}^k\}$ corresponding to this Schubert basis $\{w\widetilde{S}_{\lambda^i}\}_{i=0}^m$ with integral coefficients.

 \begin{customprop}{E}[ Proposition \ref{prop_pieri_rull2}]
 	For any three Schubert symbols $\lambda^i,\lambda^j$ and $\lambda^k$, we have the following recurrence relation. $$({w\widetilde{S}_{\lambda^1}}|_{\lambda^k}-{w\widetilde{S}_{\lambda^1}}|_{\lambda^i})~wc_{ij}^{k}=(\sum _{\lambda^s\to\lambda^i}\dfrac{c_0}{c_i}~wc_{sj}^{k}-\sum _{\lambda^k\to \lambda^t}\dfrac{c_0}{c_t}~wc_{ij}^{t}).$$
 \end{customprop}
 

\section{Invariant $q$-cell structure on weighted Grassmann orbifolds}\label{sec_weighted_grasm}
In this section, we introduce another definition of weighted Grassmann orbifold $\WG (d,n)$ where $d<n$. We recall the definition of a Schubert symbol for ${d<n}$ and discuss some (total) ordering on the set of Schubert symbols. We show that there is an equivariant embedding from a weighted Grassmann orbifold to a weighted projective space. We show that our definition of weighted Grassmann orbifold is equivalent to the previous one appeared in \cite{AbMa1}. We study the orbifold and {$q$-cell} structures of weighted Grassmann orbifolds generalizing the manifolds counter part discussed in \cite{MiSt}.

A Schubert symbol $\lambda$ for $d < n$ is a sequence of $d$ integers $(\lambda_1,\lambda_2,\dots, \lambda_d)$ such that $1\leq \lambda_1 < \lambda_2 < \cdots < \lambda_d\leq n$. The length $\ell(\lambda)$ of a Schubert symbol $\lambda :=(\lambda_1, \lambda_2, \dots , \lambda_d)$ is defined by $\ell(\lambda):=(\lambda_1-1)+(\lambda_2-2)+ \cdots + (\lambda_d-d)$. There are ${n \choose d}$ many Schubert symbols for $d < n$. One can define a partial order `$\preceq$' on the Schubert symbols for $d<n$ by
\begin{equation}\label{eq_bru_ord}
 \lambda \preceq \mu \text{ if } \lambda_i\leq \mu_i \text{ for all } i=1,2,\dots,d.
\end{equation}
  Then the set of all Schubert symbols for $d<n$ form a poset with respect to this partial order `$\preceq$'.

Let $M_d(n,d)$ be the set of all complex $n \times d$ matrix of rank $d$ and $\mbox{GL}(d,\CC)$ the set of all non-singular complex matrix of order $d$.  
We denote a matrix  $A \in M_d(n,d)$ as follows $$A=
\begin{pmatrix}
a_{11} & a_{12} & \cdots & a_{1d} \\
a_{21} & a_{22} & \cdots & a_{2d} \\
\vdots & \vdots & \vdots   & \vdots  \\
a_{n1} & a_{n2} & \cdots & a_{nd}  
\end{pmatrix}= \begin{pmatrix}
{\bf a}_1  \\
{\bf a}_2  \\
\vdots    \\
{\bf a}_n 
\end{pmatrix}$$ 
where ${\bf a}_i \in \CC^d$ for $i=1, \ldots, n$.

\begin{definition}\label{def_weighted_gras}
Let $W :=(w_1,w_2, \dots ,w_n) \in (\mathbb{Z}_{\geq 0})^{n}$ and $a \in\mathbb{Z}_{\geq 1}$. We define an equivalence relation $\sim_w$ on $M_d(n,d)$ by 
$$\begin{pmatrix}
	\bf{a}_1  \\
	\bf{a}_2  \\
	\vdots    \\
	\bf{a}_n  
\end{pmatrix} \sim_w \begin{pmatrix}
 t^{w_1} \bf{a}_1 \\
 t^{w_2}  \bf{a}_2\\
\vdots     \\
t^{w_n} \bf{a}_n 
\end{pmatrix}T $$
for $T \in {\rm GL}(d,\mathbb{C})$ and $t \in \mathbb{C}^{*}$ such that $t^a=\det(T) \in \mathbb{C}^{*}$. We denote the identification space by $${\rm WGr} (d,n):=\dfrac{M_d(n,d)}{\sim_w}. $$
The quotient map 
\begin{equation}\label{eq_piw}
\pi_w \colon M_d(n,d)\to {\rm WGr} (d,n)
\end{equation}
 is defined by $\pi_w(A)=[A]_{\sim_w}$. The topology on  ${\rm WGr}(d,n)$ is given by the quotient topology via the map $\pi_w$.   
\end{definition}

\begin{remark}
   If $W=(0,0, \dots ,0)$ and $a=1$ then ${\rm WGr} (d,n)$ is the Grassmann manifold ${\rm Gr}(d,n)$. We denote the corresponding quotient map by 
   \begin{equation}\label{eq_map_pi}
   \pi \colon M_d(n,d)\to {\rm Gr}(d,n).
   \end{equation}
   The space ${\rm Gr}(d,n)$ is a $d(n-d)$-dimensional smooth manifold and  represents the set of all $d$-dimensional vector subspaces in $\CC^n$. Several basic properties such as manifold and a cell structure of ${\rm Gr}(d, n)$ can be found in \cite{MiSt}. 
\end{remark}

\begin{remark}\label{rmk_wgt_pr_sp}
	If $d=1$ then $M_d(n,d)=M_1(n,1)=\CC^n-\{0\}$ and ${\rm GL}(1,\CC)=\CC^*$. The corresponding $\sim_w$ is given by 
	$$(z_1,z_2,\dots,z_n)\sim_w(t^{a+w_1}z_1,t^{a+w_2}z_2,\dots,t^{a+w_n}z_n).$$ The quotient space $\frac{M_1(n,1)}{\sim_w}$ is called the weighted projective space with weights $(a+w_1, a+w_2, \dots,a+w_n)$ and denoted by $\WW P(c_0, c_1, \dots,c_{n-1})$ where $c_i=a+w_{i+1}$ for $i\in \{0,1,\dots,n-1\}$. For the weighted projective space, we denote $\sim_w$ by $\sim_c$ when $c=(c_0,c_1,\dots,c_{n-1})$. This identification $\sim_c$ is called a weighted $\CC^{*}$-action on $\CC^n-\{0\}$ with weights $(c_0, c_1, \dots,c_{n-1})$. In addition, if $W=(0,0, \dots ,0)$ and $a=1$ then $c_0=1=c_1=\dots=c_{n-1}$ and $\WW P(c_0, c_1, \dots,c_{n-1})$ is $\CC P^{n-1}={\rm Gr} (1,n)$. 
\end{remark}

\begin{definition}\label{def:dict_order}
Let $\lambda=(\lambda_1,\lambda_2,\dots,\lambda_d)$ and $\mu=(\mu_1,\mu_2,\dots,\mu_d)$ be two Schubert symbols for $d < n$. We say that $\lambda < \mu$ if $\ell(\lambda) < \ell(\mu)$, otherwise we use the dictionary order if $\ell(\lambda)=\ell(\mu)$. 
\end{definition}
This gives a total order on the set of all Schubert symbols. Note that the total order `$<$' in Definition \ref{def:dict_order} preserves the partial order `$\preceq$' in \eqref{eq_bru_ord}. That is, for two Schubert symbols $\lambda$ and $\mu$, $\lambda\preceq \mu \implies \lambda \leq \mu$, but the converse may not be true in general. Observe that there may exist several other total orders on the set of all Schubert symbols which preserve the partial order `$\preceq$'. For example, the dictionary order also gives a total order on the Schubert symbols. By a total order on the set of all Schubert symbols for $d<n$, we mean one of these total orders on it.  For $m := {n \choose d} -1$, let
\begin{equation}\label{eq_total_ord}
\lambda^0<\lambda^1<\lambda^2< \dots <\lambda^m
\end{equation}
 be a total order on the Schubert symbols for $d <n$.


For $W=(w_1,w_2, \dots ,w_n)\in(\mathbb{Z}_{\geq 0})^{n}$, $a \in\mathbb{Z}_{\geq 1}$ and $i\in \{0,1,\dots,m\}$, let  
\begin{equation}\label{eq_wli}
c_i:=a+\sum_{j=1}^{d} w_{\lambda_j^i}
\end{equation}
 where $\lambda^i = (\lambda_1^i,\lambda_2^i,\dots,\lambda_d^i)$ is the $i$-th Schubert symbol given in \eqref{eq_total_ord}. Then $c_i \geq 1$ for any $i \in \{0, \ldots, m\}$. Therefore, one can define the weighted projective space $\WW P(c_0, c_1, \dots, c_m)$ from Remark \ref{rmk_wgt_pr_sp}. We denote the associated orbit map $\mathbb{C}^{m+1}-\{0\} \to \WW P(c_0, c_1, \dots, c_m)  $ by $\pi'_c$ which can be written as
 \begin{equation}\label{eq_pic}
 	\pi'_c(z_0,z_1, \ldots, z_m) = [z_0:z_1: \cdots : z_m]_{\sim_c}.
 \end{equation} 
 Note that when $c_0=c_1= \cdots =c_m=1$, then the corresponding orbit map is denoted by $$\pi' \colon \mathbb{C}^{m+1}-\{0\} \to \CC P^m.$$

Let $(t_1,t_2,\dots,t_n) \in (\CC^{*})^n$ and $A=({\bf a}_1,{\bf a}_2,\dots ,{\bf a}_n)^{tr} \in M_d(n,d)$. Then $(\CC^{*})^n$  acts on $M_d(n,d)$ defined by  
\begin{equation}\label{T_act_Gsm}
	(t_1, \ldots, t_n)({\bf a}_1,{\bf a}_2,\dots ,{\bf a}_n)^{tr}:=(t_1{\bf a}_1,t_2{\bf a}_2, \dots ,t_n{\bf a}_n)^{tr}.
\end{equation}
This induces a natural $(\CC^{*})^n$-action on $\WG (d,n)$ such that the orbit map $\pi_w$ of \eqref{eq_piw} is $(\CC^{*})^n$-equivariant.

We remark that the standard ordered basis $\{e_1, e_2, \dots, e_n\}$ of $\mathbb{C}^n$ induces an ordered basis $\{e_{\lambda^0}, e_{\lambda^1}, \dots, e_{\lambda^m}\}$ of $\Lambda^d(\mathbb{C}^n)$. Therefore, we can identify $\Lambda^d(\mathbb{C}^n)$ with $\mathbb{C}^{m+1}(=\CC \{e_{\lambda^0}, e_{\lambda^1}, \dots, e_{\lambda^m}\})$.
The standard action of $(\CC^{*})^n$ on $\mathbb{C}^n$ induces an action of $(\CC^{*})^n$ on $\CC^{m+1}-\{0\}$ which is defined by
\begin{equation}\label{T_act}
	(t_1,t_2,\dots ,t_n)(\sum_{i=0}^{m} a_{i}e_{\lambda^i})=\sum_{i=0}^{m} a_{i} t_{\lambda^i} e_{\lambda^i}
\end{equation}
where $t_{\lambda}=t_{\lambda_1}\cdots t_{\lambda_{d}}$ and $e_{\lambda}=e_{\lambda_1} \wedge \cdots \wedge e_{\lambda_{d}}$ for the Schubert symbol $\lambda =(\lambda_1,\lambda_2,\dots,\lambda_d)$.
This induces a $(\CC^{*})^n$-action on the weighted projective space $\WW P(c_0, c_1, \ldots, c_m)$ such that the orbit map $\pi^{'}_c$ in \eqref{eq_pic} is $(\CC^{*})^n$-equivariant.

 For each Schubert symbol $\lambda =(\lambda_1,\lambda_2,\dots,\lambda_d)$, let $A_\lambda$ be the matrix with row vectors $ {\bf a}_{\lambda_1}, {\bf a}_{\lambda_2}, \dots, {\bf a}_{\lambda_d}$. Consider the map $P\colon M_d(n,d)\to \CC^{m+1}$  defined by
\begin{equation}\label{embedding of matrix}
	P(A) = {\bf v}_1 \wedge {\bf v}_2 \wedge \dots \wedge { \bf v}_d =\sum_{i=0}^m \det(A_{\lambda^i}) e_{\lambda^i},
\end{equation}
where ${\bf v}_1,{\bf v}_2, \ldots, {\bf v}_d \in \CC^n$ are the columns of $A$. Observe that $ P(A)\neq 0$ because $A\in M_d(n,d)$ has rank $d$.

\begin{lemma}\label{lem_pul_emb}
	The map in \eqref{embedding of matrix} induces a weighted Pl\"{u}cker embedding $$Pl_w \colon {\rm WGr}(d,n) \rightarrow \WW P(c_0, c_1, c_2,\dots, c_m).$$
\end{lemma}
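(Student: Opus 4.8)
The plan is to realise $Pl_w$ as the map induced on orbit spaces by $P$, via the commutative square $\pi'_c\circ P = Pl_w\circ\pi_w$, and then to verify in turn that it is well defined, injective, continuous, and a homeomorphism onto its image.

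First I would record how $P$ transforms under $\sim_w$. If $A' = D_tAT$ with $D_t=\mathrm{diag}(t^{w_1},\dots,t^{w_n})$, $T\in{\rm GL}(d,\CC)$ and $t^a=\det(T)$, then extracting the rows $\lambda_1^i,\dots,\lambda_d^i$ and using multilinearity of the determinant gives
\[
\det\big((A')_{\lambda^i}\big)=t^{\,w_{\lambda_1^i}+\cdots+w_{\lambda_d^i}}\det(T)\det(A_{\lambda^i})=t^{\,c_i}\det(A_{\lambda^i}),
\]
where the last equality uses $t^a=\det(T)$ together with the definition of $c_i$ in \eqref{eq_wli}. Thus $P(A')=t\cdot_c P(A)$, where $\cdot_c$ denotes the weighted $\CC^*$-action with weights $(c_0,\dots,c_m)$ of Remark \ref{rmk_wgt_pr_sp}. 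This single identity is the heart of the statement: it is exactly where the target weights $c_i$ are forced by $W$ and $a$. It shows that $\pi'_c\circ P$ is constant on $\sim_w$-classes, so the universal property of the quotient $\pi_w$ yields a continuous map $Pl_w$ with $\pi'_c\circ P=Pl_w\circ\pi_w$.

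Next I would prove injectivity. Suppose $Pl_w([A]_{\sim_w})=Pl_w([B]_{\sim_w})$, so that $\det(B_{\lambda^i})=t^{c_i}\det(A_{\lambda^i})$ for all $i$ and some $t\in\CC^*$. Picking $s\in\CC^*$ with $s^d=t^a$ and setting $A^*=D_tA(sI)$, the pair $(t,sI)$ satisfies $\det(sI)=s^d=t^a$, so $A^*\sim_w A$, and the identity above gives $\det(A^*_{\lambda^i})=\det(B_{\lambda^i})$ for every $i$, i.e. $P(A^*)=P(B)$. At this point I would invoke the classical theory of the unweighted Pl\"ucker embedding from \cite{MiSt}: two matrices of rank $d$ with identical Pl\"ucker vectors have the same column span, hence $B=A^*T$ for some $T\in{\rm GL}(d,\CC)$, and comparing $P(B)=\det(T)P(A^*)=P(A^*)\neq 0$ forces $\det(T)=1$. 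Since $1^a=1=\det(T)$, the pair $(1,T)$ witnesses $A^*\sim_w B$, whence $A\sim_w B$ and $Pl_w$ is injective.

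Finally I would upgrade the continuous bijection onto its image to an embedding. As $\WW P(c_0,\dots,c_m)$ is compact Hausdorff, it suffices to show that ${\rm WGr}(d,n)$ is compact, for then the continuous injection $Pl_w$ is automatically a homeomorphism onto a (closed) subspace. Compactness I would obtain by proving that every $\sim_w$-orbit meets the compact complex Stiefel manifold $V_d(\CC^n)$ of matrices with orthonormal columns: given $A$, I would seek $t$ and $T$ making the columns of $D_tAT$ orthonormal, which by the Cauchy--Binet formula reduces to solving $\sum_{i=0}^{m} r^{2c_i}\,|\det(A_{\lambda^i})|^2 = 1$ for $r=|t|>0$; since all $c_i\geq 1$ and $A$ has rank $d$, the left side is continuous and strictly increasing from $0$ to $\infty$, so a unique $r$ exists, and the residual phase is absorbed into a unitary factor of $T$. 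Then $\pi_w(V_d(\CC^n))={\rm WGr}(d,n)$ is compact. I expect this last, purely topological, step to require the most care, whereas the algebraic core---well-definedness and injectivity---follows cleanly from the minor identity and the classical Pl\"ucker theorem.
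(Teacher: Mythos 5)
Your proposal is correct, and while the first step (the minor identity $\det((D_tAT)_{\lambda^i})=t^{c_i}\det(A_{\lambda^i})$ and the descent to quotients) coincides with the paper's, the other two steps take genuinely different routes. For injectivity, the paper normalizes so that $A_{\lambda^i}=B_{\lambda^i}=\mathrm{I}_d$ in a chart and then reconstructs the relation $B=DAT$ entry by entry, reading off each matrix entry $a_{kl}$ as the minor $\det(A_{\lambda^j})$ for the Schubert symbol obtained by a single replacement; you instead rescale $A$ within its $\sim_w$-class to $A^*=D_tA(sI)$ with $s^d=t^a$ so that $P(A^*)=P(B)$ \emph{exactly} in $\Lambda^d\CC^n$, and then quote the classical fact that equal Pl\"ucker vectors force equal column spans, whence $B=A^*T$ with $\det T=1$. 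Your version is shorter and cleanly isolates where the classical theory is used; the paper's is self-contained. For the topological embedding, the paper chases open sets through the commutative diagram with the unweighted Pl\"ucker embedding and the orbit maps $\pi,\pi'$, whereas you prove ${\rm WGr}(d,n)$ is compact by showing each $\sim_w$-orbit meets the Stiefel manifold (the Cauchy--Binet reduction to solving $\sum_i r^{2c_i}|\det(A_{\lambda^i})|^2=1$ is correct: the equation is exactly $|\det T|=r^a$ for the Gram--Schmidt factor, the left side is strictly increasing in $r$ since all $c_i\geq 1$ and some minor is nonzero, and the leftover phase is indeed absorbed by a unitary) and then invoke that a continuous injection from a compact space to the Hausdorff space $\WW P(c_0,\dots,c_m)$ is a homeomorphism onto its image. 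The compactness route buys a cleaner and less delicate topological conclusion (and records compactness of ${\rm WGr}(d,n)$ as a byproduct), at the cost of the extra normalization lemma; the paper's open-set argument avoids compactness but is harder to make airtight.
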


\begin{proof}
From \eqref{embedding of matrix} we have 
$$P(D A T) = \sum_{i=0}^{m} \det((DAT)_{\lambda^i}) e_{\lambda^i}= \sum_{i=0}^m t^{c_i}\det(A_{\lambda^i}) e_{\lambda^i},$$
 where $T \in \mbox{GL}(d,\mathbb{C})$, $D =\mbox{diag}(t^{w_1}, t^{w_2}, \ldots,  t^{w_n})$ is the diagonal matrix for $t \in \mathbb{C}^{*}$ such that $t^a=\det(T)$ and $c_i$ is defined in \eqref{eq_wli} for $i=0,1,2,\dots,m$. Therefore, this induces a map 
$$Pl_w \colon \WG (d,n) \rightarrow \WW P(c_0, c_1, \ldots, c_m)$$ 
defined by
\begin{equation}\label{weighted Pluecker map}
Pl_w([A]_{\sim_w}) = [\det(A_{\lambda^0}): \det(A_{\lambda^1}): \dots: \det(A_{\lambda^m})]_{\sim_c}.
\end{equation} 
This map satisfies the following commutative diagram 
\[ \begin{tikzcd}
	M_d(n,d) \arrow{r}{P} \arrow{d}{\pi_w} & \mathbb{C}^{m+1}-\{0\} \arrow{d}{\pi_c^{'}} \\%
		\WG(d,n) \arrow{r}{Pl_w} & \WW P(c_0,c_1,\dots,c_m).
\end{tikzcd}
\]
Thus the map $Pl_w$ is continuous, since $\pi_{w}$ and $\pi^{'}_c$ are quotient maps.

 Let $[A]_{\sim_w} \in \WG(d,n)$ for some $A\in M_d(n,d)$. So, there exists a Schubert symbol $\lambda^i$ such that $\det(A_{\lambda^i})\neq 0$. Without loss of generality, we can assume that $A_{\lambda^i}=\mbox{I}_{d}$, where $\mbox{I}_{d}$ is the identity matrix of order $d$. If $A_{\lambda^i}\neq \text{I}_d$ then one can calculate $s\in \CC^{*}$ such that $s^{c_i}=1/\det(A_{\lambda^i})$. Now we consider the matrices  $D=\mbox{diag}(s^{w_1},s^{w_2}, \dots,s^{w_n})$ and $T=(D_{\lambda^i}A_{\lambda^i})^{-1}$. Then $\det(T)=s^a$ and $(DAT)_{\lambda^i}=\text{I}_d$. Note $[DAT]_{\sim_w}=[A]_{\sim_w} \in \WG(d,n)$.

We prove that $Pl_w$ is injective. Let $[A]_{\sim_w}$ and $[B]_{\sim_w} \in \WG(d,n)$ such that $Pl_w([A]_{\sim_w})=Pl_w([B]_{\sim_w})$ for some  $A,B\in M_d(n,d)$.  Now
  \begin{equation}\label{eq_pl_inj}
  	Pl_w([A]_{\sim_w})=Pl_w([B]_{\sim_w})\implies \det(A_{\lambda^j})=t^{c_j}\det(B_{\lambda^j})
    \end{equation}
for some $t\in \CC^{*}$ and for all $j\in \{0,1,\dots,m\}$. Since $A\in M_d(n,d)$ there exists a Schubert symbol $\lambda^i= (\lambda_1^i,\dots,\lambda_d^i)$ such that $\det(A_{\lambda^i})\neq 0$.  Then using \eqref{eq_pl_inj},  $ \det(B_{\lambda^i})\neq 0 $. So we can assume $A_{\lambda^i}=B_{\lambda^i}=\text{I}_d$. Then $t^{c_i}=1$. Consider the matrices $D=\mbox{diag}(t^{w_1},t^{w_2}, \dots,t^{w_n})$ and $T=\mbox{diag}(t^{-w_{\lambda^i_1}},\dots,t^{-w_{\lambda^i_d}}) $.
Thus, we have $B_{\lambda^i}=(DAT)_{\lambda^i}.$

Let $a_{kl}$ and $b_{kl}$ be the $(k,l)$-th entries of the matrices $A$ and $B$ respectively for $k\notin(\lambda_1^i,\dots,\lambda_d^i)$ and $1\leq l\leq d$.
 For a fixed $l$, let $\lambda^j$ be the Schubert symbol obtained by replacing $\lambda_l^i$ by $k$ in $\lambda^i$ and then ordering the later set. Then $\det(A_{\lambda^j})=a_{kl}$ and $\det(B_{\lambda^j})=b_{kl}$.  Thus using \eqref{eq_pl_inj}, we get $$b_{kl}=t^{c_j}a_{kl}\implies b_{kl}=t^{c_j-c_i}a_{kl} \implies b_{kl}={t^{w_k-w_{\lambda_l^i}}}a_{kl}.$$
 The above condition holds for all $1\leq k\leq n$ and $1\leq l\leq d$. 
 This gives $ B=DAT $. Then we have $[A]_{\sim_w} = [B]_{\sim_w}$. Hence, $Pl_w$ is an injective map.

Observe that, if $W=(0,0, \dots ,0)$ and $a=1$ then the map $Pl_w$ is the usual Pl\"{u}cker map  
\begin{equation*}
	Pl \colon \G(d,n) \rightarrow \mathbb{C}P^{m}.
\end{equation*} 
It is well known that $Pl$ is an embedding. Moreover, we have the following commutative diagrams.
\begin{equation}\label{eq_diag1}
	\begin{tikzcd}
		\WG(d,n) \arrow{r}{Pl_w} & \WW P(c_0,c_1,\dots,c_m) \\%
		M_d(n,d) \arrow{r}{P} \arrow{u}{\pi_w} \arrow{d}{\pi}& \mathbb{C}^{m+1}-\{0\} \arrow{u}{\pi_c^{'}} \arrow{d}{\pi^{'}} \\%
		\G(d,n) \arrow{r}{Pl}& \mathbb{C}P^{m}.
	\end{tikzcd}
\end{equation}

Let $U$ be an open subset of $\WG(d,n)$. Then  $\pi_{w}^{-1}(U)$ is an open subset of $M_d(n,d)$. Since the map $\pi$ in \eqref{eq_map_pi} is an orbit map so $\pi(\pi_{w}^{-1}(U))$ is an open subset of $\G(d,n)$. Thus $Pl(\pi(\pi_{w}^{-1}(U)))=\pi' (P(\pi_{w}^{-1}(U)))$
is an open subset of $Pl(\G(d,n))$. Then $P(\pi_{w}^{-1}(U))$ is an open subset of $P(M_d(n,d))$. Therefore, $Pl_w(U)=\pi_c'(P(\pi_{w}^{-1}(U)))$ is an open subset of $Pl_w(\WG(d,n))$. Thus $Pl_w$ is an embedding.    
\end{proof}

Note that the actions of $(\CC^{*})^n$ on $\WG (d,n)$ and $\WW P(c_0, c_1, \ldots, c_m)$ implies that the weighted Pl\"{u}cker embedding $Pl_w$ in \eqref{weighted Pluecker map} is $(\CC^{*})^n$-equivariant, and $Pl_w(\WG (d,n))$ is a $(\CC^{*})^n$-invariant subset of $\WW P(c_0, c_1, \ldots, c_m)$. Thus all the maps in the diagram \eqref{eq_diag1} are $(\CC^*)^n$-equivariant.

Now we show that Definition \ref{def_weighted_gras} is equivalent to the definition of a weighted Grassmannian studied in \cite{AbMa1}. The algebraic torus $(\mathbb{C}^{*})^{n+1}$ acts on $\Lambda^d(\mathbb{C}^n)$ by
\begin{equation*}
(t_1, t_2, \dots, t_n, t) \sum_{i=0}^{m} a_{\lambda^i}e_{\lambda^i}=\sum_{i=0}^{m} t \cdot t_{\lambda^i} a_{\lambda^i} e_{\lambda^i}
\end{equation*} 
where $ t_{\lambda} =t_{\lambda_1} \cdots t_{\lambda_{d}}$ for  $\lambda=(\lambda_1, \ldots, \lambda_d)$. Consider the subgroup $WD$ of $(\mathbb{C}^{*})^{n+1}$ defined by 
\begin{equation*}
WD:=\{(t^{w_1},t^{w_2},\dots,t^{w_n},t^a)~|~t\in \mathbb{C}^{*}\}.
\end{equation*}
 Then the restricted action of $WD$ on $\Lambda^d(\mathbb{C}^n)-\{0\}$ is given by  $$(t^{w_1}, t^{w_2}, \dots, t^{w_n}, t^a)\sum_{i=0}^{m} a_{\lambda^i}e_{\lambda^i} = \sum_{i=0}^{m} t^{c_i}a_{\lambda^i}e_{\lambda^i}.$$ Observe that this action of $WD$ is same as the weighted $\mathbb{C}^{*}$-action in Remark \ref{rmk_wgt_pr_sp}. Then we have $\dfrac{\Lambda^d(\mathbb{C}^n)-\{0\}}{WD} =\WW P (c_0, \dots, c_m)$ and by the commutativity of the diagram \eqref{eq_diag1} we have
 \begin{equation*}
 	  Pl_w(\WG(d,n))=\dfrac{P(M_d(n,d))}{WD}. 
\end{equation*}
Therefore the topologies on $\WG(d,n)$ and $\dfrac{P(M_d(n,d))}{WD}$ are equivalent. Abe and Matsumura \cite{AbMa1} called the quotient $\dfrac{P(M_d(n,d))}{WD}$ a weighted Grassmannian and showed that it has an orbifold structure. We call $\WG(d,n)$ a weighted Grassmann orbifold associated to the pair $(W,a)$.

Next, we recall the Schubert cell decomposition of $\G(d, n)$ following \cite{MiSt}. For $k\leq n$, we identify  $$\mathbb{C}^k=\{(z_1,z_2, ...,z_k,0,...,0) \in \CC^n\}.$$ For the Schubert symbol $\lambda=(\lambda_1,\lambda_2,\dots,\lambda_d)$, the Schubert cell $E(\lambda)$ is defined by 
\begin{align*}
E(\lambda) :=\{X\in \G (d,n) ~|~  \text{dim}(X\cap \mathbb{C}^{\lambda_j})=j,~\text{dim}(X\cap \mathbb{C}^{\lambda_j-1})=j-1;  \forall~ j\in [d]\},
\end{align*}
where $[d]:=\{1,2,\dots,d\}$. We have the following homeomorphism from \cite[Chapter-6]{MiSt}.
\begin{equation}\label{mat_rep}
E(\lambda) \cong \Big{\{}\begin{bmatrix} 
	* & * & \dots &* \\
		\vdots & \vdots &   & \vdots \\
	* &   *  & \dots & * \\
	1 & 0 & \dots & 0 \\
	0 & * & \dots & * \\
	\vdots & \vdots &   & \vdots \\ 
	0 & * & \dots & * \\
	0 & 1 & \dots & 0 \\
	0 & 0 & \dots & * \\
	\vdots & \vdots &   & \vdots \\
	0 & 0 & \dots & * \\
	0 & 0 & \dots & 1 \\
	0 & 0 & \dots & 0 \\
	\vdots & \vdots &   & \vdots \\
		0 & 0 & \dots & 0 \\	
\end{bmatrix} \big{|}~ *\in \CC \text{ and }  e_j \text{ is the } \lambda_j\text{-th row for } j\in [d]\Big{\}}.
\end{equation}
 Note that $j$-th column in the matrices in \eqref{mat_rep} has $\lambda_j$-th entry $1$ and all subsequent entries of this column are zero for $j\in [d]$. Then $E(\lambda)$ is an open cell of dimension $\ell(\lambda)=(\lambda_1-1)+(\lambda_2-2)+\dots +(\lambda_d-d)$.

\begin{proposition}\label{lem_q_cell_grasm}
There is a $q$-cell structure on ${\rm WGr}(d,n)$ for $0 < d <n$.
\end{proposition}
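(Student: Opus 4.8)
The plan is to pull back the Schubert cell decomposition of $\G(d,n)$ recorded in \eqref{mat_rep} along the orbit map $\pi_w$ and to exhibit each resulting stratum as a quotient of a complex vector space by a finite cyclic group, which is precisely an open $q$-cell in the sense of \cite[Section 4]{PS}.

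First I would check that the stratification of $M_d(n,d)$ by the preimages $\pi^{-1}(E(\lambda^i))$ of the Schubert cells is invariant under $\sim_w$. The key point is that for any diagonal matrix $D=\mbox{diag}(t^{w_1},\dots,t^{w_n})$ one has $D(\CC^k)=\CC^k$ for every $k$, so $D$ preserves each subspace $\CC^{\lambda_j}$ of the standard flag; since right multiplication by $T\in\mbox{GL}(d,\CC)$ does not change the column span, $\pi(DAT)=D\cdot\pi(A)$ lies in the same Schubert cell as $\pi(A)$. Hence each $\pi^{-1}(E(\lambda^i))$ is $\sim_w$-saturated, and setting $\mathcal E(\lambda^i):=\pi_w(\pi^{-1}(E(\lambda^i)))$ gives a disjoint decomposition $\WG(d,n)=\bigsqcup_{i=0}^m \mathcal E(\lambda^i)$.

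Next I would identify each stratum. Writing $\widetilde E(\lambda^i)\cong\CC^{\ell(\lambda^i)}$ for the set of echelon matrices of \eqref{mat_rep} (those with $A_{\lambda^i}=\mbox{I}_d$ and the prescribed zero pattern), I would show that the composite $\sigma\colon\widetilde E(\lambda^i)\hookrightarrow \pi^{-1}(E(\lambda^i))\xrightarrow{\pi_w}\mathcal E(\lambda^i)$ is surjective with fibres the orbits of a finite cyclic group. Both facts follow from the normalization computation already used in the proof of Lemma \ref{lem_pul_emb}: given $A=A_0T$ with $A_0\in\widetilde E(\lambda^i)$ and $T\in\mbox{GL}(d,\CC)$, one solves $t^{c_i}=(\det T)^{-1}$ to move $A$ back onto $\widetilde E(\lambda^i)$, proving surjectivity; and if $A_0,B_0\in\widetilde E(\lambda^i)$ satisfy $B_0=DA_0T$ with $t^a=\det T$, then comparing the $\lambda^i$-blocks forces $T=\mbox{diag}(t^{-w_{\lambda^i_1}},\dots,t^{-w_{\lambda^i_d}})$ and hence $t^{c_i}=1$, with $(B_0)_{kl}=t^{w_k-w_{\lambda^i_l}}(A_0)_{kl}$ on the free entries. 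Thus $G_i:=\ZZ/c_i$ acts linearly on $\widetilde E(\lambda^i)\cong\CC^{\ell(\lambda^i)}$ by scaling the coordinate in position $(k,l)$ by $t^{w_k-w_{\lambda^i_l}}$, and $\mathcal E(\lambda^i)\cong\CC^{\ell(\lambda^i)}/G_i$ is an open $q$-cell of real dimension $2\ell(\lambda^i)$.

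Finally I would organize these cells into a $q$-CW structure. Using a total order \eqref{eq_total_ord} refining the partial order \eqref{eq_bru_ord}, the closure of $\mathcal E(\lambda^i)$ is the image of the Schubert variety $\overline{E(\lambda^i)}=\bigcup_{\lambda^j\preceq\lambda^i}E(\lambda^j)$, so the filtration $X_i:=\bigsqcup_{j\le i}\mathcal E(\lambda^j)$ is increasing with $X_i\setminus X_{i-1}=\mathcal E(\lambda^i)$. The characteristic maps are obtained by pushing the standard characteristic map $D^{2\ell(\lambda^i)}\to\overline{E(\lambda^i)}$ of the Grassmannian \cite{MiSt} up to $M_d(n,d)$ and then down through $\pi_w$; by the previous step this factors through the quotient by $G_i$ and restricts to the homeomorphism of the open cell, while the boundary lands in $X_{i-1}$. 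I expect the main obstacle to be exactly this last step: verifying that these attaching maps descend consistently through the finite quotients and satisfy the closure-finiteness and pushout axioms of a $q$-CW complex in the sense of \cite[Section 4]{PS}, whereas the identification of the individual strata with $\CC^{\ell(\lambda^i)}/G_i$ is a direct consequence of the computation in Lemma \ref{lem_pul_emb}.
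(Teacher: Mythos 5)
Your proposal is correct and arrives at the same decomposition $\WG(d,n)=\bigsqcup_{i=0}^{m}E(\lambda^i)/G(c_i)$, but it identifies the strata by a genuinely different mechanism. The paper routes everything through the weighted Pl\"ucker embedding: it uses $Pl_w(\pi_w(\widetilde E(\lambda^i)))=\pi'_c(P(\widetilde E(\lambda^i)))$, trivializes the principal $\CC^*$-bundle $P(\widetilde E(\lambda^i))\to E(\lambda^i)$ by $P(A)\mapsto(\pi(A),\det(A_{\lambda^i}))$, makes this trivialization equivariant for the weighted action $t.(\pi(A),s)=(t.\pi(A),t^{c_i}s)$, and then passes to the quotient, which leaves the residual $G(c_i)$-action on $E(\lambda^i)$. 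You instead work entirely inside $M_d(n,d)$, normalizing matrix representatives onto the echelon slice by solving $t^{c_i}=1/\det(A_{\lambda^i})$ and computing the residual stabilizer $G(c_i)$ together with its explicit linear action scaling the $(k,l)$-entry by $t^{w_k-w_{\lambda^i_l}}$. Both arguments hinge on the same normalization computation already present in the proof of Lemma \ref{lem_pul_emb}; your version avoids the Pl\"ucker embedding altogether and has the advantage of exhibiting the $G(c_i)$-representation on $\CC^{\ell(\lambda^i)}$ at this stage (the paper only extracts that later, in the proof of Theorem \ref{thm:tinv_cell}), whereas the paper's bundle-theoretic phrasing is what it reuses for the singularity-reduction lemmas of Section \ref{sec_int_cohom_grassmann}. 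Two minor remarks: your $\widetilde E(\lambda^i)$ denotes the echelon slice while the paper uses that symbol for the full preimage $\pi^{-1}(E(\lambda^i))$, so the notation should be reconciled; and the final step you flag as the main obstacle --- verifying the attaching-map and closure axioms of a $q$-CW complex in the sense of \cite[Section 4]{PS} --- is not a gap relative to the paper, since the paper's proof likewise stops at the stratum identification and does not carry out that verification explicitly.
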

\begin{proof}
For each $i\in \{0,1,\dots,m\}$, we define $\widetilde{E}(\lambda^i):=\pi^{-1} (E(\lambda^i))$ where the map $\pi$ is defined in \eqref{eq_map_pi}. The Schubert cell decomposition of $\G(d,n)$ gives that $\G(d,n) =\sqcup_{i=0}^m E(\lambda^i)$. This implies
\begin{equation}\label{cell structure in matrix}
 M_d(n,d)=\sqcup_{i=0}^{m}\widetilde{E}(\lambda^i),
\end{equation}
 since the map $\pi$ is surjective. Note that $$\widetilde{E}(\lambda^i)=\{A\in M_d(n,d)~|~\det(A_{\lambda^i}) \neq 0,\det(A_{\lambda^j})= 0,\text{for } j>i\}.$$ 
 Let $A\in \widetilde{E}(\lambda^i)$ and $A\sim_w B$ for a matrix $B\in M_d(n,d)$. Then $B\in \widetilde{E}(\lambda^i)$.  
Therefore we have the following decomposition of $\WG(d,n)$.
$$\WG (d,n)= \pi_w(\widetilde{E}(\lambda^0)) \sqcup \pi_w(\widetilde{E}(\lambda^1))\sqcup \dots \sqcup \pi_w(\widetilde{E}(\lambda^i)).$$
  
  By the commutativity of the diagram \eqref{eq_diag1}, we get $$Pl_w(\pi_w(\widetilde{E}(\lambda^i)))= \pi^{\prime}_c(P(\widetilde{E}({\lambda^i}))) \text { and } P(\widetilde{E}(\lambda^i)) = (\pi^{'})^{-1}(Pl(E(\lambda^i))).$$  
The map $\pi'$ is a principal $\CC^*$-bundle, and $E(\lambda^i)$ is contractible. So, there is a bundle isomorphism
  $$\phi_{i} \colon P(\widetilde{E}(\lambda^i))\to E(\lambda^i) \times \CC^{*}.$$ This map can be defined by 
  $\phi_{i}(P(A))=(\pi(A),\det(A_{\lambda^i}))$. The inverse map is defined by $(\pi(A),s)\mapsto (s(\det(A_{\lambda^i}))^{-1}P(A)) $.
  
  Let $\pi(A)\in \G(d,n)$ for some $A=({\bf a}_1,{\bf a}_2,\dots ,{\bf a}_n)^{tr}\in M_d(n,d)$ and $t\in \CC^{*}$. There is an action of $\CC^{*}$ on $\G(d,n)$ defined by
  \begin{equation}\label{weighted_T_act_Gsm}
  	t.\pi(A)=t.\pi(({\bf a}_1,{\bf a}_2,\dots ,{\bf a}_n)^{tr}):=\pi((t^{w_1}{\bf a}_1,t^{w_2}{\bf a}_2, \dots ,t^{w_n}{\bf a}_n)^{tr}).
  \end{equation}
If $\pi(A)=\pi(B)$, then $A=BT \iff DA=DBT$ for a diagonal matrix $D$ and $T\in \text{GL}(d,\CC)$. Thus $t.\pi(A)=t.\pi(B)$. 
Then $\phi_{i}$ becomes $\CC^{*}$-equivariant with the following weighted $\CC^{*}$-action on $E(\lambda^i) \times \CC^{*}$ given by $$t.(\pi(A),s)=(t.\pi(A),t^{c_i}s),$$ where $t.\pi(A)$ is defined in \eqref{weighted_T_act_Gsm} and $c_i$ is defined in \eqref{eq_wli}. Let $G(c_i)$ be the group of $c_i$-th roots of unity defined by 
  \begin{equation*}
  G(c_i) :=\{t \in \mathbb{C}^{*} ~|~ t^{c_i} = 1\},
  \end{equation*} 
   for $i=0,1,\dots,m$.
  Then the finite group $G(c_i)$ acts on the second factor of $E(\lambda^i)\times \CC^{*}$ trivially. Thus
  $$\pi^{\prime}_c(P(\widetilde{E}({\lambda^i})))= \frac{P(\widetilde{E}(\lambda^i))}{\mbox{ weighted }\CC^* \mbox{-action }}\cong \frac{E(\lambda^i) \times \CC^{*}}{\mbox{ weighted }\CC^* \mbox{-action }} \cong \frac{E(\lambda^i)}{G(c_i)}. $$ 
Therefore we get a $q$-cell decomposition of $\WG (d,n)$ given by 
 $$Pl_w(\WG (d,n))=\frac{E(\lambda^0)}{G(c_0)}\sqcup\frac{E(\lambda^1)}{G(c_1)}\sqcup \frac{E(\lambda^2)}{G(c_2)} \sqcup \dots \sqcup \frac{E(\lambda^m)}{G(c_m)}.$$
\end{proof}

 For each $k\in \{0,1,2,\dots, m\}$, let $X_k :=\sqcup_{i=0}^{k}\frac{E(\lambda^i)}{G(c_i)} \subset \WG (d, n)$. Here $X_k$ is built inductively by attaching the $q$-cells $\frac{E(\lambda^0)}{G(c_0)},\ldots,\frac{E(\lambda^k)}{G(c_k)}$ so that $X_k$ remains a subset of $\WG(d,n)$. Then we have the following filtration of $q$-CW complexes which are invariant under $(\CC^{*})^{n}$-action on $\WG (d, n)$,
\begin{equation}\label{filtration}
 \{pt\}=X_0 \subset X_1 \subset X_2 \subset \dots \subset X_m=\WG(d,n).
\end{equation} 

We note that the paper \cite{AbMa1} discussed a $q$-cell structure of $\WG (d, n)$. However, our approach is different and helps to study torsions in the integral cohomology of $\WG(d,n)$.  

\begin{remark}\label{rmk_bld_seq_mat}
   For each $k\in \{0,1,2,\dots, m\}$, consider $\widetilde{X}_k \subset M_d(n,d)$ defined by $$\widetilde{X}_k:=\{A\in M_d(n,d)~|~\text{det}(A_{\lambda^j})= 0,\text{ for } j>k\}.$$ Then $\widetilde{X}_k=\sqcup_{i=0}^{k}\widetilde{E}({\lambda^i})\subset M_d(n,d)$ and we have
	$X_k=\frac{\widetilde{X}_k}{\sim_w}=\pi_w(\widetilde{X}_k)$.
\end{remark}

\section{Integral cohomology of certain weighted Grassmann orbifolds}\label{sec_int_cohom_grassmann} 
In this section, we study several $q$-cell structure on a weighted Grassmann orbifold. We show how a permutation on the weight vector affects the weighted Grassmann orbifold. We define admissible permutation $\sigma\in S_n$ for a prime $p$ and $\WG(d,n)$. Then we discuss when $H^{*}({\rm WGr} (d,n);\mathbb{Z})$ has no $p$-torsion. We introduce the concept of divisive weighted Grassmann orbifolds which incorporates the divisive weighted projective spaces of \cite{HHRW}. We show that a divisive weighted Grassmann orbifold has a $(\CC^*)^n$-invariant cell structure. We describe this action on each cell explicitly. As a consequence, we get that the integral cohomology of a divisive weighted Grassmann orbifold has no torsion and is concentrated in even degrees.

The following lemma is well-known, but for our purpose we may need its proof. 
\begin{lemma}
The map $\pi_c' \colon \CC^{m+1} - \{0\} \to \WW P(c_0,c_1,\dots,c_m)$ induces an equivariant homeomorphism  $ \WW  P( rc_0, rc_1, \dots, rc_m) \to \WW P(c_0, c_1, \dots, c_m) $ for any positive integer $r$.
\end{lemma}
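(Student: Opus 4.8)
The plan is to show that the two weighted $\CC^*$-actions defining $\WW P(rc_0,\dots,rc_m)$ and $\WW P(c_0,\dots,c_m)$ partition $\CC^{m+1}-\{0\}$ into \emph{exactly the same} orbits, so that the identity map of $\CC^{m+1}-\{0\}$ descends to the desired homeomorphism. Recall from Remark \ref{rmk_wgt_pr_sp} that $z\sim_c z'$ means $z'_i=s^{c_i}z_i$ for all $i$ and some $s\in\CC^*$, while $z\sim_{rc}z'$ means $z'_i=t^{rc_i}z_i$ for all $i$ and some $t\in\CC^*$. The map in the statement is the one sending $[z]_{\sim_{rc}}\mapsto \pi'_c(z)=[z]_{\sim_c}$, so the whole point is to see that this is well defined and bijective.

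First I would check that $\sim_{rc}$ and $\sim_c$ are the same equivalence relation on $\CC^{m+1}-\{0\}$. If $z\sim_{rc}z'$ via $t$, then setting $s=t^r$ gives $z'_i=(t^r)^{c_i}z_i=s^{c_i}z_i$, so $z\sim_c z'$. Conversely, since the $r$-th power map $\CC^*\to\CC^*$, $t\mapsto t^r$, is surjective (here one uses that $\CC^*$ is divisible), any $s\in\CC^*$ can be written as $s=t^r$; hence $z\sim_c z'$ via $s$ forces $z'_i=t^{rc_i}z_i$, i.e. $z\sim_{rc}z'$. This equality of relations is really the only substantive point of the argument; the surjectivity of the power map is what makes scaling all weights by $r$ immaterial.

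Having established this, the identity map on $\CC^{m+1}-\{0\}$ induces a bijection $f\colon\WW P(rc_0,\dots,rc_m)\to\WW P(c_0,\dots,c_m)$ satisfying $f\circ\pi'_{rc}=\pi'_c$, where $\pi'_{rc}$ and $\pi'_c$ denote the two orbit maps. To see that $f$ is a homeomorphism I would argue directly from the quotient topologies: for any $U\subseteq\WW P(c_0,\dots,c_m)$ one has $(\pi'_{rc})^{-1}(f^{-1}(U))=(\pi'_c)^{-1}(U)$, so $U$ is open if and only if $f^{-1}(U)$ is open; the symmetric computation applied to $f^{-1}$ shows $f$ is also an open map. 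Hence $f$ is a homeomorphism.

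Finally I would verify equivariance with respect to the natural $(\CC^*)^n$-action. The action of \eqref{T_act} on $\CC^{m+1}-\{0\}$ merely rescales coordinates, so it commutes with each of the two weighted $\CC^*$-actions and therefore descends to both weighted projective spaces, with $\pi'_{rc}$ and $\pi'_c$ becoming $(\CC^*)^n$-equivariant. Since $f$ is induced by the identity, which is trivially $(\CC^*)^n$-equivariant, $f$ intertwines the two $(\CC^*)^n$-actions. The only care needed is to note that the induced action formula is literally the same on both quotients, which is immediate since it is read off from the single formula \eqref{T_act} on $\CC^{m+1}-\{0\}$. I do not expect any genuine obstacle here; the entire content lies in the equality $\sim_{rc}\,=\,\sim_c$.
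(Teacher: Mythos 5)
Your proof is correct and follows essentially the same route as the paper: the identity on $\CC^{m+1}-\{0\}$ descends to mutually inverse maps $f$ and $g$, continuity is read off from the quotient topologies, and equivariance is immediate from the action on $\CC^{m+1}-\{0\}$. The one point you make explicit that the paper leaves implicit is the well-definedness of the inverse direction, which indeed rests on the surjectivity of $t\mapsto t^r$ on $\CC^*$, i.e.\ on the two equivalence relations $\sim_{rc}$ and $\sim_c$ coinciding.
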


\begin{proof}
The weighted $\CC^*$-action on $\CC^{m+1}-\{0\}$ for $\WW P(rc_0, rc_1, \dots, rc_m)$ is given by 
$$t(z_0, z_1, \dots, z_m)=(t^{rc_0}z_0, t^{rc_1}z_1, \dots, t^{rc_m}z_m).$$ 
We denote the equivalence class by $[z_0: z_1: \dots: z_m]_{\sim_{rc}}$.

One can define a map $f\colon \WW P(rc_0,rc_1,\dots,rc_m) \to \WW P(c_0,\dots,c_m)$ by $$f([z_0:z_1:\dots:z_m]_{\sim_{rc}})=[z_0:z_1:\dots:z_m]_{\sim_{c}}$$ and a map $g \colon \WW P(c_0,c_1,\dots,c_m) \to \WW P(rc_0,rc_1,\dots,rc_m)$ by $$g([z_0:z_1:\dots:z_m]_{\sim_{c}})=[z_0:z_1:\dots:z_m]_{\sim_{rc}}.$$
Thus the following diagram commutes
\[\xymatrix{
	\mathbb{C}^{m+1}-\{0\} \ar[r]^{\rm{Id}} \ar[d]_{\pi'_{rc}} & \mathbb{C}^{m+1}-\{0\}  \ar[d]^{\pi'_{c}}\\
	\WW P(rc_0,\dots,rc_m) \ar@<-.5ex>[r]_f  & \WW P(c_0,\dots,c_m). \ar@<-.5ex>[l]_g }
\]

Observe that, we have $f\circ g=\text{Id}_{\WW P(c_0,\dots,c_m)}$ and $g\circ f=\text{Id}_{\WW P(rc_0,\dots, rc_m)}$. Thus $f$ is a bijective map with the inverse map $g$.

Let $U$ be an open subset of $\WW P(c_0,\dots,c_m)$ Then $(\pi_c^{'})^{-1}(U)=(\pi_{rc}^{'})^{-1}\circ f^{-1}(U)$. Since $\pi_{c}^{'}$ is a quotient map then $(\pi_{c}^{'})^{-1}(U)$ is an open subset of $\mathbb{C}^{m+1}-\{0\} $. Thus $f^{-1}(U)$ is an open subset of  $\WW P(rc_0,\dots,  rc_m)$ as $\pi_{rc}^{'}$ is a quotient map. Thus $f$ is continuous. By the similar arguments, we can show that $g$ is continuous. Hence $f$ is a homeomorphism and also it is equivariant with respect to the $(\CC^*)^n$-action on $\WW P(c_0,\dots,c_m)$ and $\WW P(rc_0,\dots,rc_m)$ defined after \eqref{T_act}.
\end{proof}

\begin{lemma}\label{lem_subset_condition}
	Let $B$ be a subset of $\mathbb{C}^{m+1}-\{0\}$. Let $B^{'}_{c}:=\pi_c^{'}(B)$ and $B^{'}_{rc}:=\pi_{rc}^{'}(B)$. Then $f|_{B^{'}_{rc}}\colon B^{'}_{rc} \to B_{c}^{'}$ is a homeomorphism.  
\end{lemma}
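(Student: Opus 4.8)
The plan is to deduce everything from the previous lemma, which already establishes that $f\colon \WW P(rc_0,\dots,rc_m) \to \WW P(c_0,\dots,c_m)$ is a homeomorphism of the ambient spaces, together with the commutative square relating the two orbit maps. The only genuinely new content here is to identify the image of the subspace $B'_{rc}$ under $f$ and then to invoke the elementary point-set fact that a homeomorphism restricts to a homeomorphism on any subset.

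First I would record the key consequence of the commutative diagram appearing in the proof of the previous lemma, namely that $f\circ \pi'_{rc} = \pi'_c$ as maps on $\CC^{m+1}-\{0\}$ (the top arrow of that square being the identity). Applying both sides to the subset $B$ then gives
\[
	f(B'_{rc}) = f(\pi'_{rc}(B)) = (f\circ \pi'_{rc})(B) = \pi'_c(B) = B'_c,
\]
so $f$ carries $B'_{rc}$ onto $B'_c$; the map is moreover a bijection between these sets because $f$ is a bijection of the ambient spaces.

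Next I would invoke the standard fact that if $F\colon X\to Y$ is a homeomorphism and $A\subseteq X$, then $F|_A\colon A \to F(A)$ is a homeomorphism when $A$ and $F(A)$ carry their subspace topologies. Taking $X = \WW P(rc_0,\dots,rc_m)$, $Y = \WW P(c_0,\dots,c_m)$, $A = B'_{rc}$, and $F(A)=B'_c$ by the displayed computation, this immediately yields that $f|_{B'_{rc}}\colon B'_{rc}\to B'_c$ is a homeomorphism, completing the argument.

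There is essentially no obstacle here beyond bookkeeping: the substantive work, namely constructing $f$ together with its inverse $g$ and verifying continuity in both directions, was already carried out in the previous lemma. The only point requiring a moment of care is matching the two subspace topologies correctly, and this is automatic once the image equality $f(B'_{rc}) = B'_c$ is established.
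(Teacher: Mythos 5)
Your proposal is correct and follows essentially the same route as the paper: both identify $f(B'_{rc})=B'_c$ from the commutativity $f\circ\pi'_{rc}=\pi'_c$ and then conclude by restricting the homeomorphism $f$ (with inverse $g$) to the subspaces, using that restrictions of continuous maps are continuous. Your phrasing via the general fact that a homeomorphism restricts to a homeomorphism onto the image of any subset is just a cleaner packaging of the same argument.
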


\begin{proof}
  Consider the following commutative diagram
   \[ \begin{tikzcd}
  	B \arrow{r}{\rm{Id}} \arrow{d}{\pi_{rc}^{'}} & B \arrow{d}{\pi_{c}^{'}} \\%
  	B^{'}_{rc} \arrow{r}{f|_{B^{'}_{rc}}}& B^{'}_{c}.
  \end{tikzcd}
  \]
The map $f$ is well defined and one-one. It follows that $f|_{B^{'}_{rc}}$ is also well defined and one-one. Note that $f|_{B^{'}_{rc}} $ is defined by $f|_{B^{'}_{rc}}(\pi_{rc}^{'}(b))=\pi_{c}^{'}(b)$. Therefore, $\pi_{rc}^{'}(b)\in B^{'}_{rc} $ is the inverse image of an element $\pi_{c}^{'}(b) \in B_{c}^{'}$. So $f|_{B^{'}_{rc}}$ is bijective. Also $(f|_{B^{'}_{rc}})^{-1} =g|_{B_{c}^{'}}.$ To conclude, $f|_{B^{'}_c}$ is a homeomorphism, recall that the restriction of a continuous map is also continuous.
\end{proof}

We apply the previous result onto some subsets of $ P(M_d(n,d))\subseteq \CC^{m+1}-\{0\}$ for $m+1={n\choose d}$. For all $k\in \{0,1,\dots,m\}$, recall the space $\widetilde{X}_k$ from Remark \ref{rmk_bld_seq_mat}. Then $$P(\widetilde{X}_k)= \sqcup_{i=0}^{k} P(\widetilde{E}(\lambda^i)) \subseteq P(M_d(n,d))$$ using \eqref{cell structure in matrix}. Also $P(\widetilde{X}_k) \subseteq \CC^{k+1}-\{0\} \subseteq \CC^{m+1}-\{0\}$, for $k\in\{0,1,\ldots,m\}$. 

One can calculate $c_i$ for all $i\in \{0,1,\dots,m\}$ from \eqref{eq_wli} for an weighted Grassmann orbifold $\WG(d,n)$. Let $r_k:=\gcd\{c_0,c_1, \dots, c_k \}$ for all $k \in \{1,2,\dots m\}$ and $G(r_k)$ be the group of $r_k$-th roots of unity. Then $G(r_k)$ is a subgroup of $G(c_i)$ and $G(c_i)/G(r_k) $ is isomorphic to $G(c_i/r_k)$ for $i \in \{0,1,2,\dots,k\}$.

\begin{lemma}\label{thm_red_loc_gp}
The space $\pi_{c}^{'}(P(\widetilde{X}_k))$ is homeomorphic to $\pi_\frac{c}{r_k}^{'}(P(\widetilde{X}_k))$. Moreover, $E(\lambda^k)/G(c_k)$ is homeomorphic to $E(\lambda^k)/G(c_k/r_k)$.   
\end{lemma}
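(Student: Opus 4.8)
The plan is to derive both homeomorphisms from the restriction lemma (Lemma \ref{lem_subset_condition}) applied to suitable subsets of $\CC^{k+1}-\{0\}$, together with the $q$-cell identification established in Proposition \ref{lem_q_cell_grasm}. The governing observation is that $P(\widetilde{X}_k)\subseteq \CC^{k+1}-\{0\}$, so the coordinates $z_{k+1},\dots,z_m$ vanish identically there and only the weights $c_0,\dots,c_k$ are relevant.

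First I would settle the first assertion. Since $r_k=\gcd\{c_0,\dots,c_k\}$ divides each of $c_0,\dots,c_k$, the tuple $(c_0/r_k,\dots,c_k/r_k)$ is a genuine integral weight vector on $\CC^{k+1}-\{0\}$, and $(c_0,\dots,c_k)=r_k\cdot(c_0/r_k,\dots,c_k/r_k)$. Working inside $\CC^{k+1}-\{0\}$ (that is, applying Lemma \ref{lem_subset_condition} with $m$ replaced by $k$, base weight vector $c/r_k$, and scalar $r=r_k$) to the set $B=P(\widetilde{X}_k)$ immediately yields that $\pi'_c(P(\widetilde{X}_k))$ and $\pi'_{c/r_k}(P(\widetilde{X}_k))$ are homeomorphic.

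For the second assertion I would run the same argument on the single cell $B=P(\widetilde{E}(\lambda^k))\subseteq \CC^{k+1}-\{0\}$, obtaining $\pi'_c(P(\widetilde{E}(\lambda^k)))\cong \pi'_{c/r_k}(P(\widetilde{E}(\lambda^k)))$, and then identify each side through the trivial-bundle computation in the proof of Proposition \ref{lem_q_cell_grasm}, which gives $\pi'_c(P(\widetilde{E}(\lambda^k)))\cong E(\lambda^k)/G(c_k)$ and, with the fiber weight $c_k$ replaced by $c_k/r_k$, gives $\pi'_{c/r_k}(P(\widetilde{E}(\lambda^k)))\cong E(\lambda^k)/G(c_k/r_k)$. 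Chaining the three homeomorphisms then finishes the proof.

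The step needing genuine care — and which I regard as the main obstacle — is justifying $\pi'_{c/r_k}(P(\widetilde{E}(\lambda^k)))\cong E(\lambda^k)/G(c_k/r_k)$, since the rescaled vector $c/r_k$ need not itself arise from a pair $(W,a)$, so Proposition \ref{lem_q_cell_grasm} cannot be quoted verbatim. I would circumvent this by describing the $G(c_k)$-action on the cell directly. Normalizing representatives so that $A_{\lambda^k}=\mathrm{I}_d$ as in \eqref{mat_rep}, a free coordinate $a_{pl}$ of $E(\lambda^k)$ is scaled by $t^{w_p-w_{\lambda^k_l}}$ under $t\in G(c_k)$, exactly as in the injectivity computation in the proof of Lemma \ref{lem_pul_emb}. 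Writing $\lambda^j$ for the Schubert symbol obtained from $\lambda^k$ by replacing $\lambda^k_l$ with $p$, one has $w_p-w_{\lambda^k_l}=c_j-c_k$ by \eqref{eq_wli}; since $p<\lambda^k_l$ forces $\ell(\lambda^j)<\ell(\lambda^k)$ and hence $j<k$, the integer $r_k$ divides both $c_j$ and $c_k$, and therefore divides $c_j-c_k$. Consequently the subgroup $G(r_k)\subseteq G(c_k)$ acts trivially on every free coordinate, so the $G(c_k)$-action descends through $G(c_k)/G(r_k)\cong G(c_k/r_k)$ and $E(\lambda^k)/G(c_k)\cong E(\lambda^k)/G(c_k/r_k)$. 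This simultaneously identifies the induced action and supplies the desired homeomorphism, making the second assertion independent of whether $c/r_k$ is realizable as a weighted Grassmann orbifold.
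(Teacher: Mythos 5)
Your proof is correct and follows essentially the same route as the paper: both parts are obtained by applying Lemma \ref{lem_subset_condition} to $B=P(\widetilde{X}_k)$ and to $B=P(\widetilde{E}(\lambda^k))$ with scalar $r=r_k$, which is exactly what the paper does (its entire proof of the second claim is ``similar arguments with $P(\widetilde{X}_k)$ replaced by $P(\widetilde{E}(\lambda^k))$''). The one place you go beyond the paper is the closing computation showing that $G(r_k)$ acts trivially on the free coordinates of $E(\lambda^k)$ --- using that each free entry $a_{pl}$ scales by $t^{w_p-w_{\lambda^k_l}}=t^{c_j-c_k}$ with $\lambda^j\prec\lambda^k$, hence $j<k$ and $r_k\mid c_j-c_k$ --- so that the $G(c_k)$-action genuinely descends to $G(c_k)/G(r_k)\cong G(c_k/r_k)$; this correctly justifies the identification $\pi'_{c/r_k}(P(\widetilde{E}(\lambda^k)))\cong E(\lambda^k)/G(c_k/r_k)$, which the paper leaves implicit even though the rescaled weights $c/r_k$ need not come from a pair $(W,a)$, and it also pins down the induced action that is used later (e.g.\ in Theorem \ref{thm:tinv_cell}).
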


\begin{proof}
	The following diagram is commutative. 
	\[ \begin{tikzcd}
		P(\widetilde{X}_k) \arrow{r}{\rm{Id}} \arrow{d}{\pi_{c}^{'}} & P(\widetilde{X}_k) \arrow{d}{\pi_\frac{c}{r_k}^{'}} \\%
	\pi_{c}^{'}(P(\widetilde{X}_k))	\arrow{r}{f|_{\pi_c^{'}(P(\widetilde{X}_k))}}& \pi_\frac{c}{r_k}^{'}(P(\widetilde{X}_k)).
	\end{tikzcd}
	\]
	By Lemma \ref{lem_subset_condition}, the lower horizontal map is a homeomorphism. The second statement of the Lemma follows by the similar arguments with $P(\widetilde{X}_k)$ is replaced by $P(\widetilde{E}(\lambda^k))$.
\end{proof}

\begin{theorem}\label{prop_cell_after_red}
The collection $\{\frac{E(\lambda^i)}{G(c_i/r_k)}\}_{i=0}^k$ gives a $q$-cell structure of $\pi^{'}_\frac{c}{r_k}(P(\widetilde{X}_k))$ for $k=1,2,\dots,m$. Moreover, $\{E(\lambda^i)/G(c_i/r_i)\}_{i=0}^m$ gives a $q$-cell structure of ${\rm WGr} (d, n)$ where $r_0=c_0$.
\end{theorem}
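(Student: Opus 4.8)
The plan is to prove the two assertions separately: the first by a direct re-application of the bundle computation in Proposition~\ref{lem_q_cell_grasm} to a reduced weight vector, and the second (the ``moreover'' clause) by an induction along the filtration \eqref{filtration} in which the reduction lemmas are applied one cell at a time.

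For the first statement I would reuse verbatim the principal-$\CC^*$-bundle argument from the proof of Proposition~\ref{lem_q_cell_grasm}, but now with the weight vector $c/r_k=(c_0/r_k,\dots,c_k/r_k)$ in place of $c$. Since $r_k=\gcd\{c_0,\dots,c_k\}$ divides each $c_i$ with $i\le k$, these reduced weights are positive integers, so $\WW P(c_0/r_k,\dots,c_k/r_k)$ and the orbit map $\pi'_{c/r_k}$ are defined. The same bundle isomorphism $\phi_i$ then gives $\pi'_{c/r_k}(P(\widetilde{E}(\lambda^i)))\cong E(\lambda^i)/G(c_i/r_k)$ for each $i\le k$. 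Because the decomposition $P(\widetilde{X}_k)=\sqcup_{i=0}^k P(\widetilde{E}(\lambda^i))$ is preserved by $\pi'_{c/r_k}$, distinct cells being distinguished by the largest index at which the Pl\"ucker coordinate is nonzero (a property invariant under the coordinate-rescaling action), the pieces assemble into the asserted $q$-cell structure of $\pi'_{c/r_k}(P(\widetilde{X}_k))$.

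For the ``moreover'' statement I would induct on $k$ along \eqref{filtration}, proving that $X_k$ carries the $q$-cell structure $\{E(\lambda^i)/G(c_i/r_i)\}_{i=0}^k$. The base case $k=0$ is immediate: $r_0=c_0$ forces $G(c_0/r_0)=\{1\}$ and $E(\lambda^0)$ is a point, so the cell is $X_0=\{pt\}$. For the inductive step, Lemma~\ref{thm_red_loc_gp} together with the first statement gives $X_k\cong \pi'_{c/r_k}(P(\widetilde{X}_k))=\sqcup_{i=0}^k E(\lambda^i)/G(c_i/r_k)$, whose top cell is exactly $E(\lambda^k)/G(c_k/r_k)$, matching the reduction $r_k$ prescribed for the $k$-th cell. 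The subcomplex spanned by the cells $0,\dots,k-1$ of this structure is $\pi'_{c/r_k}(P(\widetilde{X}_{k-1}))$; since $r_k\mid r_{k-1}\mid c_i$ for $i\le k-1$, Lemma~\ref{lem_subset_condition} (applied with scaling factor $r_k$) identifies it with $\pi'_c(P(\widetilde{X}_{k-1}))=Pl_w(X_{k-1})\cong X_{k-1}$. By the induction hypothesis $X_{k-1}$ already carries the structure $\{E(\lambda^i)/G(c_i/r_i)\}_{i=0}^{k-1}$, so attaching the top cell $E(\lambda^k)/G(c_k/r_k)$ along the attaching map inherited from \eqref{filtration} yields $\{E(\lambda^i)/G(c_i/r_i)\}_{i=0}^k$ on $X_k$; taking $k=m$ gives $\WG(d,n)$.

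The main obstacle I anticipate is not any single computation but the bookkeeping needed to see that these cell-by-cell reductions glue into one honest $q$-CW structure in the sense of \cite[Section~4]{PS}. The subcomplex $X_{k-1}$ is being described by two different reductions, the uniform $r_k$ coming from the first statement and the mixed $\{r_i\}$ coming from the induction, and one must check that the attaching map of the top cell is the same under both. Here I would lean on the fact that every reduction homeomorphism in Lemmas~\ref{lem_subset_condition} and \ref{thm_red_loc_gp} is induced by the single coordinate-wise identity map $f$ on $\CC^{m+1}-\{0\}$; because $f$ commutes with the inclusions $\widetilde{X}_{k-1}\subset \widetilde{X}_k$ and with every orbit map $\pi'$, the two descriptions of $X_{k-1}$ agree as subspaces and the characteristic map of the $k$-th Schubert cell restricts to the same attaching map in either structure, so the gluing is consistent.
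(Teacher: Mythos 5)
Your proposal is correct and follows essentially the same route as the paper: the first assertion is obtained by rerunning the quotient computation of Proposition~\ref{lem_q_cell_grasm} on the decomposition $P(\widetilde{X}_k)=\sqcup_{i=0}^{k}P(\widetilde{E}(\lambda^i))$ with the reduced weights $c/r_k$, and the second by applying Lemma~\ref{thm_red_loc_gp} successively for $k=1,\dots,m$, which is exactly your induction along the filtration. Your explicit check that the reduction homeomorphisms are all induced by the identity on $\CC^{m+1}-\{0\}$, so the two descriptions of $X_{k-1}$ and the attaching maps agree, is a useful elaboration of a point the paper leaves implicit.
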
	

\begin{proof}
Note that  the sets $P(\widetilde{E}(\lambda^i))$ and $P(M_d(n,d))=\sqcup_{i=0}^{m} P(\widetilde{E}(\lambda^i))$ are invariant under the weighted $\CC^*$-action defined in Remark \ref{rmk_wgt_pr_sp}. Then we have the following commutative diagram. 	
\[	
\begin{tikzcd}
	P(\widetilde{X}_k) \arrow{d}{\pi_\frac{c}{r_k}^{'}} &\subset  & \mathbb{C}^{k+1}-\{0\} \arrow{d}{\pi_\frac{c}{r_k}^{'}} \\%
	\pi_\frac{c}{r_k}^{'}(P(\widetilde{X}_k)) &\subset & \WW P(\frac{c_0}{r_k},\frac{c_1}{r_k},\dots,\frac{c_k}{r_k}).
\end{tikzcd}
\]
Thus the first part follows from
\begin{align*}
 \pi^{'}_\frac{c}{r_k}(P(\widetilde{X}_k))
&=\pi^{'}_\frac{c}{r_k}(\sqcup_{i=0}^{k}P(\widetilde{E}(\lambda^i)))\\ 
&=\sqcup_{i=0}^{k}\pi^{'}_\frac{c}{r_k}(P(\widetilde{E}(\lambda^i)))\\
&=\sqcup_{i=0}^{k}\frac{P(\widetilde{E}(\lambda^i))}{\sim_{c/r_k}} \cong \sqcup_{i=0}^{k}\frac{E(\lambda^i)}{G(c_i/r_k)}.
\end{align*}

The second part follows from $\WG(d,n)\cong \pi_c'(P(\widetilde{X}_m))$ and 
by applying Lemma \ref{thm_red_loc_gp} successively for every $k\in \{1,2,\dots,m\}$. 
\end{proof}

We show that two weighted Grassmann orbifolds are weakly equivariantly  homeomorphic if the associated weight vectors are differed by a permutation $\sigma\in S_n$. Let $X, Y$ be two $G$-spaces. A map $f \colon X \to Y$ is called a weakly equivariant  homeomorphism if $f$ is a homeomorphism and $f(g x) = \eta(g) f(x)$ for some $\eta \in {\rm Aut}(G)$ and for all $(g, x) \in G \times X$. If $\eta$ is identity, then   $f$ is called an equivariant  homeomorphism.

\begin{theorem}\label{lem_euiv_sn}
Let $W=(w_1,w_2,\dots,w_n)\in (\mathbb{Z}_{\geq 0})^n$, $0<a\in \ZZ$ and $\sigma W:=(w_{\sigma_1},w_{\sigma_2},\dots,w_{\sigma_n})$ for some $\sigma\in S_n$. If  ${\rm WGr}(d,n)$ and ${\rm WGr}^{\prime}(d,n)$ are the weighted Grassmann orbifolds associated to $(W, a)$ and $(\sigma W, a)$ respectively, then ${\rm WGr}(d,n)$ is weakly equivariantly homeomorphic to ${\rm WGr}^{\prime}(d,n)$. Moreover, this may induce different $q$-cell structures on ${\rm WGr}(d,n)$ for different $\sigma$.  	
\end{theorem}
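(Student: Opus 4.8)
The plan is to realize the weak equivariance by the obvious row-permutation map on matrices. Writing $A=(\mathbf a_1,\dots,\mathbf a_n)^{tr}\in M_d(n,d)$, I would define $\Phi_\sigma\colon M_d(n,d)\to M_d(n,d)$ to be the map whose output has $\mathbf a_{\sigma_i}$ as its $i$-th row. Since permuting rows preserves rank, $\Phi_\sigma$ is a homeomorphism of $M_d(n,d)$ with inverse $\Phi_{\sigma^{-1}}$. First I would check that $\Phi_\sigma$ carries $\sim_w$-classes to $\sim_{\sigma W}$-classes: if $A'=D_w(t)\,A\,T$ with $D_w(t)=\mathrm{diag}(t^{w_1},\dots,t^{w_n})$, $T\in\mathrm{GL}(d,\CC)$ and $t^a=\det T$, then the $i$-th row of $A'$ is $t^{w_i}\mathbf a_iT$, so the $i$-th row of $\Phi_\sigma(A')$ is $t^{w_{\sigma_i}}\mathbf a_{\sigma_i}T$; that is, $\Phi_\sigma(A')=D_{\sigma W}(t)\,\Phi_\sigma(A)\,T$ with $D_{\sigma W}(t)=\mathrm{diag}(t^{w_{\sigma_1}},\dots,t^{w_{\sigma_n}})$. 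This is precisely the relation $\sim_{\sigma W}$ defining $\WG'(d,n)$.

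Consequently $\Phi_\sigma$ descends to a continuous map $\overline{\Phi}_\sigma\colon\WG(d,n)\to\WG'(d,n)$, continuity being immediate from the quotient topologies exactly as in Lemma \ref{lem_pul_emb}. Running the identical computation for $\sigma^{-1}$ produces a continuous two-sided inverse, so $\overline{\Phi}_\sigma$ is a homeomorphism. For equivariance I would compare $i$-th rows under the action \eqref{T_act_Gsm}: the $i$-th row of $\Phi_\sigma((t_1,\dots,t_n)\cdot A)$ is $t_{\sigma_i}\mathbf a_{\sigma_i}$, which equals the $i$-th row of $(t_{\sigma_1},\dots,t_{\sigma_n})\cdot\Phi_\sigma(A)$. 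Hence $\overline{\Phi}_\sigma(g\cdot x)=\eta(g)\cdot\overline{\Phi}_\sigma(x)$, where $\eta(t_1,\dots,t_n)=(t_{\sigma_1},\dots,t_{\sigma_n})$ is the coordinate-permuting automorphism of $(\CC^*)^n$; this is exactly weak equivariance.

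Finally, for the last sentence I would transport the $q$-cell structure of $\WG'(d,n)$ back along $\overline{\Phi}_\sigma^{-1}$ to obtain a $q$-cell structure on $\WG(d,n)$. The singularity group on the cell indexed by $\lambda^i$ is then governed by $c_i'=a+\sum_{j=1}^d w_{\sigma_{\lambda^i_j}}$, together with its reductions $c_i'/r_k'$ from Theorem \ref{prop_cell_after_red}, instead of by $c_i=a+\sum_j w_{\lambda^i_j}$. Since different $\sigma$ reassign the weight sums among the Schubert symbols, the induced orderings, the gcds $r_k'$, and hence the groups $G(c_i'/r_k')$ can change; I would close with a small explicit example witnessing two genuinely distinct transported structures.

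The argument involves no deep difficulty; the content is entirely bookkeeping. The one point demanding care is fixing the convention (row $i\mapsto$ row $\sigma_i$) once and for all, so that the weight vector transforms to $\sigma W$ rather than $\sigma^{-1}W$ and so that $\eta$ emerges as the correct automorphism, and then tracking that convention consistently through both the equivalence-relation check and the equivariance check.
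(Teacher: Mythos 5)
Your proposal is correct and follows essentially the same route as the paper: the paper's map $\bar f_\sigma(A)=\sigma A=(a_{\sigma_i j})$ is exactly your row-permutation $\Phi_\sigma$, and the induced weakly equivariant homeomorphism, the automorphism $\eta$ permuting the torus coordinates, and the transported $q$-cells with local groups $G(\sigma c_i)$ for $\sigma c_i=a+\sum_j w_{\sigma(\lambda^i_{i_j})}$ all match the paper's argument. The only cosmetic difference is that the paper describes the new cells intrinsically as $\sigma E(\lambda)\subset\mathrm{Gr}(d,n)$ rather than as images under $\overline{\Phi}_\sigma^{-1}$, which amounts to the same thing.
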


\begin{proof}
The matrix $A =(a_{ij}) \in M_d(n,d)$ if and only if $\sigma A = (a_{\sigma_i j}) \in M_d(n,d)$. Thus the natural weakly equivariant homeomorphism $\bar{f}_{\sigma} \colon M_d(n,d) \to M_d(n,d)$ defined by $\bar{f}_{\sigma}(A)=\sigma A$ induces the following commutative diagram.
\begin{equation}\label{eq_per_diag}  
 \begin{tikzcd}
		M_d(n,d) \arrow{r}{\bar{f}_\sigma} \arrow{d}{\pi_w} &  M_d(n,d) \arrow{d}{\pi_{\sigma w}} \\%
		\WG (d,n) \arrow{r}{f_{\sigma}} & {\rm WGr}^{\prime}(d,n).
	\end{tikzcd}
\end{equation}
Here $\pi_w$ is the quotient map defined in Definition \ref{def_weighted_gras}. Thus, \eqref{eq_per_diag} induces a weakly equivariant homeomorphism $f_{\sigma} \colon \WG (d,n) \to {\rm WGr}^{\prime}(d,n)$, where $(\CC^*)^n$-action is differed by the permutation $\sigma$. Note that ${f}_{\sigma}([A]_{\sim w})=[\sigma A]_{\sim w}$.

We discuss the effects of the permutation $\sigma$ on the $q$-cell structure on $\WG(d,n)$. Consider $\mathbb{C}^i=\{(x_1,x_2,\dots,x_n) \in \CC^n ~|~ x_j=0 ~\text{for}~ j>i\}$. For $\sigma \in S_n$, define $$\sigma\mathbb{C}^n :=\{(x_{\sigma_1},x_{\sigma_2},\dots,x_{\sigma_n})\}$$ and  $$\sigma\mathbb{C}^i :=\{(x_{\sigma_1}, x_{\sigma_2}, \dots, x_{\sigma_n}) \in \sigma\mathbb{C}^n ~|~ x_{\sigma_j}=0 ~\text{for}~ \sigma_j > i\}.$$ 
Let $\lambda=(\lambda_1, \ldots, \lambda_d)$ be a Schubert symbol for $d < n$. Then 
\begin{align*}
\sigma E(\lambda)& = \{\sigma Y ~|~ Y \in E(\lambda)\}\\
& =\{X\in \G(d,n)~|~\text{dim}(X\cap \sigma\mathbb{C}^{\lambda_i})=i,~ \text{dim}(X\cap \sigma\mathbb{C}^{\lambda_i-1})=i-1, i \in [d]\}
\end{align*}
where $[d] = \{1,2,\ldots,d\}$. 
 Then $E(\lambda) \cong \sigma E(\lambda)$ and dim$(\sigma E(\lambda))=\ell(\lambda)$.
 
So the permutation of the coordinates in $\CC^n$ determines another cell structure for $\G(d,n)$ given by $\G(d,n)=\sigma\G(d,n)=\sqcup_{i=0}^{m} \sigma E(\lambda^i)$. This cell structure of $\G(d,n)$ induces the following decomposition of $M_d(n,d)$ which is similar to \eqref{cell structure in matrix}. 
\begin{equation*}
	M_d(n,d)=\sqcup_{i=0}^{m}\sigma\widetilde{E}(\lambda^i) ~\mbox{and} ~ P(M_d(n,d))=\sqcup_{i=0}^{m}P(\sigma\widetilde{E}(\lambda^i)).
\end{equation*}
 
Recall that $\lambda^i =(\lambda_1^i, \ldots, \lambda_d^i)$ is a Schubert symbol and $c_i$ is defined in \eqref{eq_wli} for $i=0, \ldots, m$. Then $\sigma\lambda^i :=(\sigma(\lambda_{i_1}^i), \ldots, \sigma(\lambda_{i_d}^i))$, where $i_1,\dots,i_d\in\{1,\dots,d\}$ such that $\sigma(\lambda_{i_1}^i)<\sigma(\lambda_{i_2}^i)<\cdots <\sigma(\lambda_{i_d}^i)$. Let 
\begin{equation}\label{eq_sigma_c_j}
	\sigma c_i:=a+\sum_{j=1}^dw_{\sigma(\lambda_{i_j}^i)}.
\end{equation}
 Now from the commutativity of the diagram \eqref{eq_diag1},  we have the following.
$$\pi_w(\sigma(\widetilde{E}(\lambda^i))) \cong Pl_w(\pi_w(\sigma\widetilde{E}(\lambda^i)))= \frac{P(\sigma\widetilde{E}(\lambda^i))}{\mbox{ weighted }\CC^* \mbox{-action }} \cong \frac{\sigma E(\lambda^i)}{G(\sigma c_i)}. $$ 
Then we get a $q$-cell structure of the weighted Grassmann orbifold $\WG(d,n)$ given by $$\WG(d,n)\cong \frac{\sigma E(\lambda^0)}{G(\sigma c_0)}\sqcup \frac{\sigma E(\lambda^1)}{G(\sigma c_1)} \sqcup \dots \sqcup \frac{\sigma E(\lambda^m)}{G(\sigma c_m)}.$$
\end{proof}

\begin{remark}
	 Applying the permutation $\sigma$ on the rows of the matrices in $E(\lambda)$, we get the matrices of $\sigma E(\lambda)$. That is,
	$$\begin{pmatrix}
		v_1  \\
		v_2  \\
		\vdots    \\
		v_n  
	\end{pmatrix} \in E(\lambda) \iff  \begin{pmatrix}
		v_{\sigma_1}  \\
		v_{\sigma_2}  \\
		\vdots    \\
		v_{\sigma_n}   
	\end{pmatrix} \in \sigma E(\lambda).$$ 
\end{remark}
	
\begin{proposition}\cite[Theorem 1.1]{BNSS}\label{thm_bnss}
	Let $X$ be a $q$-CW complex with no odd dimensional $q$-cells and $p$
	a prime number. Let $\{pt\}=X_0\subseteq X_1\subseteq \dots \subseteq X_s=X$ is a filtration of $X$ such that $X_i$ is obtained by attaching the $q$-cell $\RR^{2k_i} /G_i$ to $X_{i-1}$ for all $i\in \{1,2,\dots,s\}$. If $\gcd\{p,|G_i|\}= 1$ for all $i\in \{1,2,\dots,s\}$,  then $H^{*}(X; \ZZ)$ has no $p$-torsion and $H^{odd}(X; \ZZ_p)$ is trivial.
\end{proposition}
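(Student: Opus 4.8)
The plan is to analyze the filtration one step at a time through the long exact sequence of the pair $(X_i, X_{i-1})$ and to show that, after inverting $|G_i|$, each attaching step contributes cohomology only in even degrees. The first step is to identify the relative cohomology. Since $X_i$ is obtained from $X_{i-1}$ by attaching the single $q$-cell $\RR^{2k_i}/G_i$, the quotient $X_i/X_{i-1}$ is the one-point compactification of this open $q$-cell, which is homeomorphic to $S^{2k_i}/G_i$, where $G_i$ acts on $S^{2k_i}$ as the one-point compactification of its linear action on $\RR^{2k_i}$ (fixing the point at infinity). Hence $H^\ast(X_i, X_{i-1}; M) \cong \tilde H^\ast(S^{2k_i}/G_i; M)$ for any coefficient module $M$.

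Next I would compute $\tilde H^\ast(S^{2k_i}/G_i; M)$ for $M = \ZZ_p$ and $M = \ZZ_{(p)}$ by a transfer argument. Because $\gcd(p, |G_i|) = 1$, the order $|G_i|$ is invertible in both rings, and the standard transfer for a finite group action gives a natural isomorphism $H^\ast(S^{2k_i}/G_i; M) \cong H^\ast(S^{2k_i}; M)^{G_i}$. Since $\tilde H^\ast(S^{2k_i}; M)$ is a free rank-one $M$-module concentrated in degree $2k_i$, its subgroup of $G_i$-invariants is again a free $M$-module (either $M$ or $0$) concentrated in the even degree $2k_i$; crucially, this conclusion does not require the $G_i$-action to preserve orientation. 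Thus each relative group $H^\ast(X_i, X_{i-1}; M)$ is a free $M$-module concentrated in an even degree.

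With this input, both assertions follow by induction on $i$ along the filtration, starting from $X_0 = \{pt\}$. For the $\ZZ_p$ statement, suppose $H^{odd}(X_{i-1}; \ZZ_p) = 0$; in the long exact sequence of $(X_i, X_{i-1})$ the only possibly nonzero connecting map into the relative group lands in degree $2k_i$ and originates from $H^{2k_i - 1}(X_{i-1}; \ZZ_p) = 0$, so it vanishes, the sequence breaks into short exact sequences, and $H^{odd}(X_i; \ZZ_p) = 0$. For the integral statement I would run the same induction with $\ZZ_{(p)}$ coefficients, using that $X$ is finite dimensional with finitely generated cohomology so that $H^\ast(X; \ZZ_{(p)}) \cong H^\ast(X; \ZZ) \otimes \ZZ_{(p)}$, and the $p$-torsion of $H^\ast(X; \ZZ)$ is exactly the torsion of $H^\ast(X; \ZZ_{(p)})$. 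The inductive hypothesis now reads that $H^\ast(X_{i-1}; \ZZ_{(p)})$ is a free $\ZZ_{(p)}$-module concentrated in even degrees; the vanishing connecting maps split the long exact sequence into short exact sequences of free even modules, which split over the PID $\ZZ_{(p)}$, so $H^\ast(X_i; \ZZ_{(p)})$ is again free and even. Taking $i = s$ yields that $H^\ast(X; \ZZ_{(p)})$ is torsion free, hence $H^\ast(X; \ZZ)$ has no $p$-torsion.

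The routine part is the homological bookkeeping in the last paragraph; the two places that require genuine care are the identification $X_i/X_{i-1} \cong S^{2k_i}/G_i$ within the $q$-CW framework and the transfer isomorphism $H^\ast(Y/G; M) \cong H^\ast(Y; M)^G$ for the non-free action of $G_i$ on $S^{2k_i}$ (the fixed point at infinity rules out a direct covering-space transfer). The latter holds whenever $|G|$ is invertible in $M$ and the action is sufficiently well behaved, but verifying the hypotheses for the compactified linear quotient, and checking that the isomorphism is compatible with the maps in the long exact sequence of the pair, is the main technical obstacle. I expect this to be the crux of the argument, with everything else being formal.
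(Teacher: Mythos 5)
The paper does not prove this statement: it is imported verbatim as \cite[Theorem 1.1]{BNSS}, so there is no in-paper argument to compare yours against. Judged on its own terms, your proof is correct and is the standard argument one would expect (and is close in spirit to the proof in the cited source): identify $X_i/X_{i-1}$ with $S^{2k_i}/G_i$, use the transfer for the finite (non-free, basepoint-fixing) $G_i$-action to see that $\tilde H^\ast(S^{2k_i}/G_i;M)\cong \tilde H^\ast(S^{2k_i};M)^{G_i}$ is a free $M$-module concentrated in degree $2k_i$ whenever $|G_i|$ is invertible in $M$, and then induct along the filtration with $\ZZ_p$ and $\ZZ_{(p)}$ coefficients, the connecting maps dying for parity reasons. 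Two small remarks. First, the step you single out as the crux is less delicate than you fear: the isomorphism $H^\ast(Y/G;M)\cong H^\ast(Y;M)^G$ for $|G|$ invertible in $M$ is standard for $G$-CW complexes (chain-level averaging/coinvariants, or Bredon's transfer for compact transformation groups), and the compactified linear sphere carries an evident $G$-CW structure; moreover you never need the transfer to be compatible with the maps in the long exact sequence of the pair, since you only use it to compute the relative groups as modules, after which the vanishing of the connecting homomorphisms is forced purely by the degree concentration and the inductive hypothesis. Second, your care about orientation-reversing elements (invariants being $M$ or $0$) is a genuine point in the generality of the stated proposition, though in the application in this paper the $G_i$ are cyclic groups acting complex-linearly, so they act trivially on $H^{2k_i}(S^{2k_i})$ and the invariants are all of $M$.
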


Recall the definition of $\sigma c_i$ from \eqref{eq_sigma_c_j} for $\WG(d,n)$ associated to the weight $W=(w_1,\dots,w_n)\in (\ZZ_{\geq0})^n$ and $1\leq a\in \ZZ$.
\begin{definition}\label{def_adm_per}
 A permutation $\sigma \in S_n$ is called admissible for a prime $p$ and ${\rm WGr}(d,n)$ if 
 $$\gcd\{p,\frac{\sigma c_i}{d_i}\}=1$$
 where  $\sigma c_i$ is defined in \eqref{eq_sigma_c_j} and $d_i=\gcd\{\sigma c_0,\sigma c_1,\dots,\sigma c_i\}$, for $i\in \{1,2,\dots, m\}$.
\end{definition}
Some examples of admissible permutations are discussed in Example \ref{ex:no_torsion2}.

\begin{remark}\label{rmk_adm_per}
 There may not always exist an admissible permutation $\sigma \in S_n$ for  a prime $p$ and ${\rm WGr}(d,n)$. However if $d=1$, then $m=n-1$ and there always exists an admissible permutation $\sigma \in S_n$ for every prime $p$. The admissible permutation $\sigma \in S_n$ may not be unique.
\end{remark}

Now we prove the following result which says when the integral cohomology of $\WG(d,n)$ have no $p$-torsion. 

\begin{theorem}\label{p torsion in permuted Grassmann}
 If there exists an admissible permutation $\sigma \in S_n$ for a prime $p$ and ${\rm WGr} (d,n)$, then $H^{*}({\rm WGr} (d,n);\mathbb{Z})$ has no $p$-torsion and $H^{odd}({\rm WGr} (d,n);\mathbb{Z}_p)$ is trivial. 
 \end{theorem}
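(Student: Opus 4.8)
The plan is to combine three ingredients that are already available: the permuted $q$-cell structure of Theorem \ref{lem_euiv_sn}, the local-group reduction of Theorem \ref{prop_cell_after_red}, and the torsion criterion of Proposition \ref{thm_bnss}. First I would feed the admissible permutation $\sigma$ into Theorem \ref{lem_euiv_sn}, which equips $\WG(d,n)$ with the $q$-cell structure
\[
\WG(d,n)\cong \frac{\sigma E(\lambda^0)}{G(\sigma c_0)}\sqcup \frac{\sigma E(\lambda^1)}{G(\sigma c_1)}\sqcup \cdots \sqcup \frac{\sigma E(\lambda^m)}{G(\sigma c_m)},
\]
together with the $(\CC^*)^n$-invariant filtration $\{pt\}=X_0\subset X_1\subset\cdots\subset X_m=\WG(d,n)$ in which $X_k$ is obtained from $X_{k-1}$ by attaching the single $q$-cell $\sigma E(\lambda^k)/G(\sigma c_k)$. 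Since each Schubert cell $\sigma E(\lambda^i)$ is homeomorphic to $\CC^{\ell(\lambda^i)}=\RR^{2\ell(\lambda^i)}$, every $q$-cell here is even-dimensional.

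Next I would reduce the local groups. Setting $d_i:=\gcd\{\sigma c_0,\ldots,\sigma c_i\}$ as in Definition \ref{def_adm_per} (with the convention $d_0=\sigma c_0$), the reduction argument of Lemma \ref{thm_red_loc_gp} and Theorem \ref{prop_cell_after_red}, carried out with the permuted weights $(\sigma c_0,\ldots,\sigma c_m)$ in place of $(c_0,\ldots,c_m)$, replaces the filtration above by one whose $i$-th attached cell is $\sigma E(\lambda^i)/G(\sigma c_i/d_i)$. That reduction only uses the weighted-projective structure through Lemma \ref{lem_subset_condition}, so it applies verbatim to the permuted weights; equivalently, one may apply Theorem \ref{prop_cell_after_red} directly to the weighted Grassmann orbifold $\WG'(d,n)$ associated to $(\sigma W,a)$, whose weights are exactly the $\sigma c_i$, and transport the resulting cell structure back along the weakly equivariant homeomorphism $f_\sigma$ of Theorem \ref{lem_euiv_sn}.

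Finally, the $i$-th reduced $q$-cell has local group of order $|G(\sigma c_i/d_i)|=\sigma c_i/d_i$, and admissibility of $\sigma$ is precisely the statement that $\gcd\{p,\sigma c_i/d_i\}=1$ for every $i$. Hence the reduced filtration is a filtration of a $q$-CW complex with only even-dimensional $q$-cells, attached one at a time, each with local group of order prime to $p$; this is exactly the hypothesis of Proposition \ref{thm_bnss}, which then yields that $H^*(\WG(d,n);\ZZ)$ has no $p$-torsion and that $H^{odd}(\WG(d,n);\ZZ_p)$ is trivial. I expect the only delicate point to be the bookkeeping in the middle step---checking that the reduction produces exactly the groups $G(\sigma c_i/d_i)$ whose orders match the gcd's appearing in the admissibility hypothesis---and the cleanest way to dispatch it is to quote Theorem \ref{prop_cell_after_red} for $\WG'(d,n)$ rather than re-deriving the reduction for the permuted cells. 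Everything else is formal once Proposition \ref{thm_bnss} is invoked.
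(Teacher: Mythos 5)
Your proposal is correct and follows essentially the same route as the paper: invoke the permuted $q$-cell structure from Theorem \ref{lem_euiv_sn}, reduce each local group to $G(\sigma c_i/d_i)$ via Lemma \ref{thm_red_loc_gp}, and then apply Proposition \ref{thm_bnss} using the admissibility hypothesis. The paper carries out the reduction directly on the permuted cells rather than transporting it from $\WG'(d,n)$, but this is only a cosmetic difference.
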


\begin{proof} 
Suppose $\sigma \in S_n$ be the admissible permutation for $p$ and $\WG(d,n)$. Then $$\gcd\{p,\frac{\sigma c_i}{d_i}\}=1 $$
 by Definition \ref{def_adm_per}, where $d_i=\gcd\{\sigma c_0, \sigma c_1, \dots, \sigma c_i\}$ for all $i\in \{1,2,\dots,m\}$. By Theorem \ref{lem_euiv_sn}, we have the following $q$-cell structure  $$\WG (d,n)\cong \frac{\sigma E(\lambda^0)}{G(\sigma c_0)}\sqcup \frac{\sigma E(\lambda^1)}{G(\sigma c_1)} \sqcup \dots \sqcup \frac{\sigma E(\lambda^m)}{G(\sigma c_m)},$$
where $\sigma E(\lambda^i)\cong E(\lambda^i)\cong \CC^{\ell(\lambda^i)}$. Let 
$$\sigma X_k = \sqcup_{i=0}^k \frac{\sigma E(\lambda^i)}{G(\sigma c_i)} \subseteq \WG(d,n) \text{ for } k =0, 1,\ldots, m.$$ 
Then $\sigma X_k$ is a subcomplex of $\WG(d,n)$ for $k=0,1,\dots,m$ and $\sigma X_m=\WG(d,n)$. This gives a filtration 

$$\{pt\}=\sigma X_0\subset \sigma X_1\subset \dots\subset \sigma X_m=\WG(d,n)$$
such that $\sigma X_i-\sigma X_{i-1}$ is homeomorphic to $\frac{\sigma E(\lambda^{i})}{G(\sigma c_i)}$.

 Using Lemma \ref{thm_red_loc_gp}, $\frac{\sigma E(\lambda^{i})}{G(\sigma c_i)}\cong \frac{\sigma E(\lambda^{i})}{G(\frac{\sigma c_i}{d_i})}$. That is $\sigma X_i-\sigma X_{i-1}$ is homeomorphic to $\frac{\CC^{\ell(\lambda^i)}}{G(\frac{\sigma c_i}{d_i})}$ for all $i=1,2,\dots,m$. Therefore, by Proposition \ref {thm_bnss}, $H^{*}(\WG(d,n);\ZZ)$ has no $p$-torsion and the group $H^{odd}(\WG (d,n);\mathbb{Z}_p)$ is trivial. This completes the proof.
\end{proof}

\begin{corollary}\cite{Ka}
$H^{*}(\WW P(c_0,c_1,\dots,c_m);\mathbb{Z})$ has no torsion.
\end{corollary}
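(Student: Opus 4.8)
The plan is to realize the weighted projective space as the $d=1$ case of a weighted Grassmann orbifold and then to invoke Theorem \ref{p torsion in permuted Grassmann} one prime at a time. By Remark \ref{rmk_wgt_pr_sp}, setting $n := m+1$, $a := 1$ and $w_{i+1} := c_i - 1 \ge 0$ (legitimate since each $c_i \ge 1$), we obtain ${\rm WGr}(1, m+1) = \WW P(c_0, \ldots, c_m)$, where the Schubert symbols for $d=1$ are $(1) < (2) < \cdots < (m+1)$ and $c_i = a + w_{\lambda_1^i} = a + w_{i+1}$. Since $\WW P(c_0, \ldots, c_m)$ is a compact orbifold, its integral cohomology is finitely generated; hence its torsion subgroup is finite, and it suffices to prove that $H^*(\WW P(c_0,\ldots,c_m); \mathbb{Z})$ has no $p$-torsion for every prime $p$.

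Fix a prime $p$. The essential point is to exhibit an admissible permutation $\sigma \in S_{m+1}$ for $p$ in the sense of Definition \ref{def_adm_per}. For $d=1$ the weight $\sigma c_i = a + w_{\sigma(i+1)}$ of \eqref{eq_sigma_c_j} is obtained merely by permuting the entries of $(c_0, \ldots, c_m)$, so as $\sigma$ runs over $S_{m+1}$ the tuple $(\sigma c_0, \ldots, \sigma c_m)$ runs over every rearrangement of $(c_0, \ldots, c_m)$. Writing $v_p$ for the $p$-adic valuation and $d_i = \gcd\{\sigma c_0, \ldots, \sigma c_i\}$, we have $v_p(d_i) = \min_{j \le i} v_p(\sigma c_j)$, so the admissibility condition $\gcd\{p, \sigma c_i / d_i\} = 1$ is equivalent to requiring $v_p(\sigma c_i) = \min_{j \le i} v_p(\sigma c_j)$ for every $i \in \{1, \ldots, m\}$.

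Accordingly I would choose $\sigma$ so that the weights appear in non-increasing order of $p$-adic valuation, i.e.\ $v_p(\sigma c_0) \ge v_p(\sigma c_1) \ge \cdots \ge v_p(\sigma c_m)$; such a $\sigma$ plainly exists. Then each $\sigma c_i$ attains the minimal $p$-valuation among $\sigma c_0, \ldots, \sigma c_i$, so the equivalent condition above holds and $\sigma$ is admissible for $p$. Theorem \ref{p torsion in permuted Grassmann} now yields that $H^*(\WW P(c_0,\ldots,c_m); \mathbb{Z})$ has no $p$-torsion, and since $p$ was arbitrary we conclude that $H^*(\WW P(c_0, \ldots, c_m); \mathbb{Z})$ is torsion-free.

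The only substantive step is the valuation-sorting construction of $\sigma$; the identification $\WW P(c_0,\ldots,c_m) = {\rm WGr}(1, m+1)$ and the passage from ``no $p$-torsion for all $p$'' to ``torsion-free'' are routine. I do not anticipate a genuine obstacle, since for $d=1$ an arbitrary permutation of the weights is realizable, which is precisely why Remark \ref{rmk_adm_per} asserts the existence of an admissible permutation for every prime in this case.
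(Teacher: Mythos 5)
Your proposal is correct and follows essentially the same route as the paper, whose proof is simply the citation of Theorem \ref{p torsion in permuted Grassmann}, Remark \ref{rmk_wgt_pr_sp} and Remark \ref{rmk_adm_per}. The only thing you add is an explicit justification of Remark \ref{rmk_adm_per} for $d=1$ (sorting the weights by non-increasing $p$-adic valuation to produce an admissible permutation), which the paper asserts without proof; your verification of that step is accurate.
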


\begin{proof}
	This follows from Theorem \ref{p torsion in permuted Grassmann}, Remark \ref{rmk_wgt_pr_sp} and \ref{rmk_adm_per}.
\end{proof}

\begin{example}\label{ex:no_torsion2}
Consider the weighted Grassmann orbifold $\WG(2,4)$ for $W=(1,1,3,4)$ and $a=2$.  Here  $n=4,~d=2,~{n \choose d}=6,~ m={n \choose d}-1=5$. So in this case, we have $6$ Schubert symbols which are 
\begin{equation*}
	\lambda^0=(1,2)<\lambda^1=(1,3)<\lambda^2=(1,4)<\lambda^3=(2,3)<\lambda^4=(2,4)<\lambda^5=(3,4)
\end{equation*}
	in the ordering as in Definition \ref{def:dict_order}. For the prime $p=3$, consider the permutation $\sigma\in S_4$ defined by $$\sigma_1=3, \sigma_2=4, \sigma_3=1, \sigma_4=2 .$$ Then $$\sigma c_0=9, \sigma c_1=6, \sigma c_2=6, \sigma c_3=7, \sigma c_4=7 \text{ and } \sigma c_5=4$$ using \eqref{eq_sigma_c_j}. This $\sigma$ is admissible for $p=3$ and $\WG(2,4)$. Thus $H^{*}(\WG(2,4);\mathbb{Z})$ has no $3$-torsion by Theorem \ref{p torsion in permuted Grassmann}.
	
For the prime $p=7$, consider  the permutation  $\sigma\in S_4$ defined by $$\sigma_1=4, \sigma_2=2, \sigma_3=1, \sigma_4=3.$$ 
Then
 $$ \sigma c_0=7, \sigma c_1=7, \sigma c_2=9, \sigma c_3=4, \sigma c_4=6 \text{ and } \sigma c_5=6$$ using \eqref{eq_sigma_c_j}. This $\sigma$ is admissible for $p=7$ and $\WG(2,4)$. Thus $H^{*}(\WG(2,4);\mathbb{Z})$ has no $7$-torsion by Theorem \ref{p torsion in permuted Grassmann}.

 To compute that it has no 2-torsion, we need to consider a different total order on the Schubert symbols given by 
 $$\lambda^0=(1,2)<\lambda^1=(1,3)<\lambda^2=(2,3)<\lambda^3=(1,4)<\lambda^4=(2,4)<\lambda^5=(3,4) $$
 which preserves the partial order in \eqref{eq_bru_ord}. In this case, $$c_0=4,c_1=6,c_2=6,c_3=7,c_4=7 \text{ and } c_5=9$$ using \eqref{eq_wli}. 
  The identity permutation in $S_4$ is admissible for $p=2$ and this $\WG(2,4)$. Then $H^{*}(\WG(2,4);\mathbb{Z})$ has no $2$-torsion by Theorem \ref{p torsion in permuted Grassmann}.
	
The only primes which divides the orders of the orbifold singularities of this $\WG(2,4)$ are $2,3$ and $7$. Hence the integral cohomology of $\WG(2,4)$ of this example has no torsion. \qed
\end{example}

\begin{remark}
	Considering the total order given in Definition \ref{def:dict_order} on the Schubert symbols, there may not exist an admissible permutation $\sigma$ for a prime. However, one can take another total order on the Schubert symbols for which one can find $\sigma$  satisfying the hypothesis in Theorem \ref{p torsion in permuted Grassmann} for this prime.
\end{remark}

The $q$-cell structure in Theorem \ref{prop_cell_after_red} leads us to introduce the following definition which generalizes the concept of divisive weighted projective spaces of \cite{HHRW}. 

\begin{definition}\label{def:div_wgs}
	A weighted Grassmann orbifold ${\rm WGr}(d,n)$ is called divisive if there exists $\sigma \in S_n$ such that ${\sigma c_{i}}$ divides ${\sigma c_{i-1}}$ for $i=1,2,\dots,m$ where $\sigma c_i$ is defined in \eqref{eq_sigma_c_j}. 
\end{definition}

\begin{example}
	Consider the weighted Grassmann orbifold $\WG (2,4)$ for the weight $W=(1,6,1,1)$ and $a=3$. We have the ordering on the $6$ Schubert symbols given by $$\lambda^0=(1,2)<\lambda^1=(1,3)<\lambda^2=(1,4)<\lambda^3=(2,3)<\lambda^4=(2,4)<\lambda^5=(3,4).$$ Consider the permutation $\sigma\in S_4$ defined by $$\sigma_1=2,\sigma_2=1,\sigma_3=3,\sigma_4=4.$$ Then $$\sigma c_{0}=10, \sigma c_{1}=10, \sigma c_{2}=10, \sigma c_{3}=5, \sigma c_{4}=5, \sigma c_{5}=5$$  using \eqref{eq_sigma_c_j}. Thus $\sigma c_{i}$ divides $\sigma c_{i-1}$ for $i=1,2,\dots,5$. So $\WG(2,4)$ of this example is divisive. \qed
\end{example}

\begin{example}
	Let  $\alpha$ and $\gamma$ be any two non-negative integers and $\beta$ be any positive integer such that $\beta>d\alpha$. Let $\WG(d,n)$ be the corresponding weighted Grassmann orbifold for $W=(\alpha+\gamma\beta,\alpha,\dots,\alpha)\in (\ZZ_{\geq0})^n$  
	and $a=\beta-d\alpha >0$. Consider the total order $\{\lambda^0,\lambda^1,\dots,\lambda^m\}$ on the Schubert symbol induced by the dictionary order. Then 
\begin{align*}
c_i= 
     \begin{cases}
       (\gamma+1)\beta  & \quad\text{if}~  i=0,1,\dots, {n-1 \choose d-1}-1 \\
           \beta  &\quad\text{if}~ i={n-1 \choose d-1},\dots,m.
     \end{cases}
\end{align*}
Then $c_i$ divides $c_{i-1}$ for all $i=1,2,\dots,m$. Therefore this $\WG (d,n)$ is a divisive weighted Grassmann orbifold. \qed
\end{example}

\begin{definition}
 Let $\lambda$ be a Schubert symbol  for $d < n$. Then a reversal of $\lambda$ is a pair  $(k, k')$ such that $k \in \lambda$, $k' \notin \lambda$ and $k' < k $. We denote the set of all reversals of $\lambda$ by $\rm{rev}(\lambda)$. If $(k,k')\in \rm{rev}(\lambda)$ then $(k,k')\lambda$ is the Schubert symbol obtained by replacing $k$ by $k'$ in $\lambda$ and ordering the later set. 	
\end{definition}

\begin{remark}
If $(k, k') \in \text{rev}(\lambda)$ then $(k, k')\lambda \prec \lambda$ and $\ell(\lambda)$ is the cardinality of the set $\text{rev}(\lambda)$  where $\ell(\lambda)$ is the length of $\lambda$. In \cite{KnTa, AbMa1} the authors defined an inversion of a Schubert symbol $\lambda$ is a pair $(k, k')$ such that $k \in \lambda, k' \notin \lambda$ and $k < k'$. In some sense, our  definition of reversal is dual to the definition of inversion. If $\rm{inv}(\lambda)$ be the set of all inversions of $\lambda$ and $\ell^{'}(\lambda)$ is the cardinality of the set $\rm{inv}(\lambda)$ then $\ell(\lambda)+\ell^{'}(\lambda)=d(n-d)$. Also If $(k,k')\in \rm{rev}(\lambda)$ and $(k,k')\lambda=\mu$ then $(k',k)\in \rm{inv}(\mu)$ and and $(k',k)\mu=\lambda$.	
\end{remark}

Next we discuss $(\CC^*)^n$-action on some cell structure of a divisive weighted Grassmann orbifold. Recall the $(\CC^*)^n$-action on $\WG(d,n)$ which is induced from \eqref{T_act_Gsm}. We adhere the notation from Section \ref{sec_weighted_grasm}.

\begin{theorem}\label{thm:tinv_cell}
If ${\rm WGr}(d,n)$ is a divisive weighted Grassmann orbifold then it has a $(\CC^*)^n$-invariant cell structure with only even dimensional cells. 
\end{theorem}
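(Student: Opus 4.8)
The plan is to use the divisibility chain supplied by the divisive hypothesis to force every local group in the (permuted) $q$-cell structure to collapse to the trivial group, so that the $q$-cells become genuine even-dimensional cells.

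First I would fix a permutation $\sigma \in S_n$ as in Definition \ref{def:div_wgs}, so that $\sigma c_i$ divides $\sigma c_{i-1}$ for each $i=1,\dots,m$. Iterating gives the divisibility chain $\sigma c_m \mid \sigma c_{m-1}\mid \cdots \mid \sigma c_0$, so that $\sigma c_i$ divides $\sigma c_j$ for every $j\le i$; hence $d_i:=\gcd\{\sigma c_0,\dots,\sigma c_i\}=\sigma c_i$ and $\sigma c_i/d_i=1$ for all $i$. By Theorem \ref{lem_euiv_sn} the orbifold $\WG(d,n)$ carries the $q$-cell decomposition $\bigsqcup_{i=0}^m \sigma E(\lambda^i)/G(\sigma c_i)$. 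Applying the successive singularity reduction of Lemma \ref{thm_red_loc_gp} to this decomposition exactly as in the proof of Theorem \ref{prop_cell_after_red} (with $r_i=d_i=\sigma c_i$), each $q$-cell is replaced by $\sigma E(\lambda^i)/G(\sigma c_i/d_i)=\sigma E(\lambda^i)/G(1)=\sigma E(\lambda^i)$. Since $G(1)$ is trivial, no orbifold singularity survives, and each cell is $\sigma E(\lambda^i)\cong E(\lambda^i)\cong \CC^{\ell(\lambda^i)}=\RR^{2\ell(\lambda^i)}$, which is even-dimensional.

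Next I would check that these genuine cells organize into a $(\CC^*)^n$-invariant CW structure, not merely a set-theoretic partition. The relevant filtration is $\sigma X_0\subset \sigma X_1\subset\cdots\subset \sigma X_m=\WG(d,n)$, where $\sigma X_k=\pi_w(\sigma\widetilde X_k)$ and $\sigma\widetilde X_k=\sqcup_{i=0}^k \sigma\widetilde E(\lambda^i)\subset M_d(n,d)$ is the $\sigma$-analogue of $\widetilde X_k$ in Remark \ref{rmk_bld_seq_mat}. Each $\sigma\widetilde X_k$ is cut out by the vanishing of a prescribed set of maximal minors of $A\in M_d(n,d)$; since scaling the rows of $A$ as in \eqref{T_act_Gsm} only multiplies each maximal minor by a nonzero scalar, these vanishing conditions are preserved, so $\sigma\widetilde X_k$, and hence $\sigma X_k$, is $(\CC^*)^n$-invariant. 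Thus the filtration is invariant, $\sigma X_k\setminus\sigma X_{k-1}\cong \sigma E(\lambda^k)$, and because all local groups are now trivial the $q$-CW structure of Theorem \ref{prop_cell_after_red} (in the sense of \cite[Section 4]{PS}) is an honest CW structure whose characteristic maps are the closures of the cells in $\WG(d,n)$.

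Finally, to make the $(\CC^*)^n$-action explicit on each cell I would parametrize $\sigma E(\lambda^i)$ by the free entries $a_{kl}$ of its echelon representative \eqref{mat_rep}, normalized so that the distinguished $d\times d$ minor equals $I_d$. Applying the row scaling of \eqref{T_act_Gsm} and then restoring this normalized form by the column operations and weighted scaling permitted by $\sim_w$, one reads off that the torus acts on each coordinate $a_{kl}$ through a monomial character in $(t_1,\dots,t_n)$ determined by $\sigma$ and the pivot positions of $\lambda^i$, giving an explicit linear $(\CC^*)^n$-action on $\CC^{\ell(\lambda^i)}$. I expect this bookkeeping of the normalization—tracking precisely which character acts on each free coordinate—to be the main obstacle of the proof, whereas the collapse of the local groups and the invariance of the filtration follow immediately from the divisibility chain and from the torus-invariance of the Plücker vanishing conditions.
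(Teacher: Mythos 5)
Your proposal follows the paper's proof essentially step for step: divisivity collapses each local group $G(\sigma c_i/\gcd\{\sigma c_0,\dots,\sigma c_i\})$ to the trivial group via Lemma \ref{thm_red_loc_gp}, leaving genuine even-dimensional cells $\sigma E(\lambda^i)\cong\CC^{\ell(\lambda^i)}$, and the explicit torus action is obtained exactly as you describe, by row-scaling the echelon representative and renormalizing via $\sim_w$. The only point your sketch leaves implicit is that the resulting character $t_{\sigma\lambda^{j}}(t_{\sigma\lambda^i})^{-\sigma c_j/\sigma c_i}$ on the coordinate indexed by a reversal $\lambda^j$ of $\lambda^i$ is single-valued (independent of the choice of root $s$ with $s^{\sigma c_i}=t_{\sigma\lambda^i}$) precisely because divisivity makes the exponent $\sigma c_j/\sigma c_i$ an integer --- the hypothesis is used a second time at this step of the paper's computation.
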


\begin{proof}
Let $\WG(d, n)$ be a divisive weighted Grassmann orbifold corresponding to $W = (w_1, \ldots, w_n)\in (\ZZ_{\geq 0})^n$ and $1\leq a \in \ZZ$. Then there exists $\sigma\in S_n$ such that $\sigma c_i$ divides $\sigma c_{i-1}$ for all $i=1,2,\dots,m$. Let us assume $\sigma=\text{Id}$ (the identity permutation in $S_n$). Then $c_i$ divides $c_{i-1}$ for all $i=1,2,\dots,m$. Then $\gcd\{c_0,c_1,\dots,c_i\}=c_i$ for all $i\in \{1,2,\dots,m\}$. Thus 
 $$\pi_{w}(\widetilde{E}(\lambda^i)) \cong \frac{E(\lambda^i)}{G(c_i)} \cong \frac{E(\lambda^i)}{G(c_i/c_i)}\cong E(\lambda^i) \text{ for all } i=1,2,\dots,m$$ 
 by Lemma \ref{thm_red_loc_gp}. Thus each element of $\pi_{w}(\widetilde{E}(\lambda^i))$ can be represented uniquely by the equivalence class of an $n \times d$ matrix defined in \eqref{mat_rep}.

Let $\lambda^i=(\lambda_1, \ldots, \lambda_d)$ be a Schubert symbol for $d <n$ and ${\bf z} \in \mathbb{C}^{\ell(\lambda^i)}$. Since $\ell(\lambda^i)=(\lambda_1-1)+(\lambda_2-2)+\dots +(\lambda_d-d)$, we can write
 $${\bf z}=({\bf z}_1, {\bf z}_2, \ldots, {\bf z}_d)$$
where ${\bf z}_l= (z^l_1, z^l_2, \dots, \widehat{z^l_{\lambda_1}}, \dots, \widehat{z^l_{\lambda_2}},\dots,\widehat{z^l_{\lambda_{l-1}}},\dots, z^l_{\lambda_l-1})$ for $l=1, \ldots, d$. 

For $(t_1, \ldots, t_n) \in (\CC^*)^{n}$, we define $s \in \CC^*$ such that $s^{c_i}= t_{\lambda_1} \cdots t_{\lambda_d} $. Define $T \in \mbox{GL}(d,\mathbb{C})$ by $$T=\mbox{diag}((\dfrac{t_{\lambda_1}}{s^{w_{\lambda_1}}}), (\dfrac{t_{\lambda_2}}{s^{w_{\lambda_2}}}), \dots,(\dfrac{t_{\lambda_d}}{s^{w_{\lambda_d}}})).$$ Then $\det(T)=s^a$. 

Define $g_{\lambda^i} \colon \mathbb{C}^{\ell(\lambda^i)} \to \pi_w(\widetilde{E}(\lambda^i))$ by 
 
\[g_{\lambda^i}({\bf z}):=
\begin{bmatrix} 
	z^1_1 & z^2_1 & \dots & z^d_1 \\
	\vdots & \vdots &   & \vdots \\ 
	z^1_{\lambda_1-1} &   z^2_{\lambda_1-1}  & \dots & z^d_{\lambda_1-1} \\
	1 & 0 & \dots & 0 \\
	0 & z^2_{\lambda_1+1} & \dots & z^d_{\lambda_1+1} \\
	\vdots & \vdots &   & \vdots \\ 
	0 & z^2_{\lambda_2-1}& \dots & z^d_{\lambda_2-1} \\
	0 & 1 & \dots & 0 \\
	0 & 0 & \dots & z^d_{\lambda_2+1} \\
	\vdots & \vdots &   & \vdots \\
	0 & 0 & \dots & z^d_{\lambda_d-1} \\
	0 & 0 & \dots & 1 \\
	0 & 0 & \dots & 0 \\
	\vdots & \vdots &   & \vdots \\
	0 & 0 & \dots & 0 \\
	
\end{bmatrix}.
\]

 Then $g_{\lambda^i}$ is a homeomorphism. Now we have
\[
(t_1,t_2,\dots,t_n)g_{\lambda^i}({\bf z}) =
\begin{bmatrix} 
	t_1z^1_1 & t_1z^2_1 & \dots & t_1z^d_1 \\
	\vdots & \vdots &   & \vdots \\ 
	t_{\lambda_1-1}z^1_{\lambda_1-1} & t_{\lambda_1-1} z^2_{\lambda_1-1}  & \dots & t_{\lambda_1-1}z^d_{\lambda_1-1} \\
	t_{\lambda_1} & 0 & \dots & 0 \\
	0 & t_{\lambda_1+1}z^2_{\lambda_1+1} & \dots & t_{\lambda_1+1}z^d_{\lambda_1+1} \\
	\vdots & \vdots &   & \vdots \\ 
	0 & t_{\lambda_2-1}z^2_{\lambda_2-1}& \dots & t_{\lambda_2-1}z^d_{\lambda_2-1} \\
	0 & t_{\lambda_2} & \dots & 0 \\
	0 & 0 & \dots & t_{\lambda_2+1}z^d_{\lambda_2+1} \\
	\vdots & \vdots &   & \vdots \\
	0 & 0 & \dots & t_{\lambda_d-1}z^d_{\lambda_d-1} \\
	0 & 0 & \dots & t_{\lambda_d} \\
	0 & 0 & \dots & 0 \\
	\vdots & \vdots &   & \vdots \\
	0 & 0 & \dots & 0 \\
\end{bmatrix}.
\]

Then

\[
 (t_1,t_2,\dots,t_n)g_{\lambda^i}({\bf z})=
\begin{bmatrix} 
	\frac{s^{w_{\lambda_1}}}{t_{\lambda_1}}t_1z^1_1 & \frac{s^{w_{\lambda_2}}}{t_{\lambda_2}}t_1z^2_1 & \dots & \frac{s^{w_{\lambda_d}}}{t_{\lambda_d}}t_1z^d_1 \\
	\vdots & \vdots &   & \vdots \\ 
	\frac{s^{w_{\lambda_1}}}{t_{\lambda_1}}t_{\lambda_1-1}z^1_{\lambda_1-1} & \frac{s^{w_{\lambda_2}}}{t_{\lambda_2}}t_{\lambda_1-1} z^2_{\lambda_1-1}  & \dots & \frac{s^{w_{\lambda_d}}}{t_{\lambda_d}}t_{\lambda_1-1}z^d_{\lambda_1-1} \\
	\frac{s^{w_{\lambda_1}}}{t_{\lambda_1}}t_{\lambda_1} & 0 & \dots & 0 \\
	0 & \frac{s^{w_{\lambda_2}}}{t_{\lambda_2}}t_{\lambda_1+1}z^2_{\lambda_1+1} & \dots & \frac{s^{w_{\lambda_d}}}{t_{\lambda_d}}t_{\lambda_1+1}z^d_{\lambda_1+1} \\
	\vdots & \vdots &   & \vdots \\ 
	0 &\frac{s^{w_{\lambda_2}}}{t_{\lambda_2}} t_{\lambda_2-1}z^2_{\lambda_2-1}& \dots & \frac{s^{w_{\lambda_d}}}{t_{\lambda_d}}t_{\lambda_2-1}z^d_{\lambda_2-1} \\
	0 & \frac{s^{w_{\lambda_2}}}{t_{\lambda_2}}t_{\lambda_2} & \dots & 0 \\
	0 & 0 & \dots & \frac{s^{w_{\lambda_d}}}{t_{\lambda_d}}t_{\lambda_2+1}z^d_{\lambda_2+1} \\
	\vdots & \vdots &   & \vdots \\
	0 & 0 & \dots & \frac{s^{w_{\lambda_d}}}{t_{\lambda_d}}t_{\lambda_d-1}z^d_{\lambda_d-1} \\
	0 & 0 & \dots & \frac{s^{w_{\lambda_d}}}{t_{\lambda_d}}t_{\lambda_d} \\
	0 & 0 & \dots & 0 \\
	\vdots & \vdots &   & \vdots \\
	0 & 0 & \dots & 0 \\
\end{bmatrix} \times T.
\]

\begin{align*}
 & \text{ Thus, } (t_1,t_2,\dots,t_n)g_{\lambda^i}({\bf z})\\ 
 & = D \times 
\begin{bmatrix} 
	\frac{s^{w_{\lambda_1}}}{t_{\lambda_1}s^{w_1}}t_1z^1_1 & \frac{s^{w_{\lambda_2}}}{t_{\lambda_2}s^{w_1}}t_1z^2_1 & \dots & \frac{s^{w_{\lambda_d}}}{t_{\lambda_d}s^{w_1}}t_1z^d_1 \\
	\vdots & \vdots &   & \vdots \\ 
	\frac{s^{w_{\lambda_1}}}{s^{w_{\lambda_1-1}}t_{\lambda_1}}t_{\lambda_1-1}z^1_{\lambda_1-1} & \frac{s^{w_{\lambda_2}}}{s^{w_{\lambda_1-1}}t_{\lambda_2}}t_{\lambda_1-1} z^2_{\lambda_1-1}  & \dots & \frac{s^{w_{\lambda_d}}}{s^{w_{\lambda_1-1}}t_{\lambda_d}}t_{\lambda_1-1}z^d_{\lambda_1-1} \\
	1 & 0 & \dots & 0 \\
	0 & \frac{s^{w_{\lambda_2}}}{s^{w_{\lambda_1+1}}t_{\lambda_2}}t_{\lambda_1+1}z^2_{\lambda_1+1} & \dots & \frac{s^{w_{\lambda_d}}}{s^{w_{\lambda_1+1}}t_{\lambda_d}}t_{\lambda_1+1}z^d_{\lambda_1+1} \\
	\vdots & \vdots &   & \vdots \\ 
	0 &\frac{s^{w_{\lambda_2}}}{s^{w_{\lambda_2-1}}t_{\lambda_2}} t_{\lambda_2-1}z^2_{\lambda_2-1}& \dots & \frac{s^{w_{\lambda_d}}}{s^{w_{\lambda_2-1}}t_{\lambda_d}}t_{\lambda_2-1}z^d_{\lambda_2-1} \\
	0 & 1  & \dots & 0 \\
	0 & 0 & \dots & \frac{s^{w_{\lambda_d}}}{s^{w_{\lambda_2+1}}t_{\lambda_d}}t_{\lambda_2+1}z^d_{\lambda_2+1} \\
	\vdots & \vdots &   & \vdots \\
	0 & 0 & \dots & \frac{s^{w_{\lambda_d}}}{s^{w_{\lambda_d-1}}t_{\lambda_d}}t_{\lambda_d-1}z^d_{\lambda_d-1} \\
	0 & 0 & \dots & 1 \\
	0 & 0 & \dots & 0 \\
	\vdots & \vdots &   & \vdots \\
	0 & 0 & \dots & 0 \\
\end{bmatrix} \times T\\
&= D M T,
\end{align*}
where $D = \mbox{diag}(s^{w_1}, \ldots, s^{w_n})$ is a diagonal matrix.
So by the equivalence relation $\sim_w$ as in Definition \ref{def_weighted_gras}, we get $$(t_1,t_2,\dots,t_n)g_{\lambda^i}({\bf z}) = M \in \pi_{w}(\widetilde{E}(\lambda^i))\subset\WG(d, n).$$

Let $a_{kl}$ be the coefficient of $z^l_k$ in the matrix $M$ for $1\leq l \leq d$, $1\leq k \leq \lambda_l-1$, $k\neq \lambda_1, \lambda_2, \dots,\lambda_{l-1}$. Then 
$$a_{kl}=\frac{s^{w_{\lambda_l}} t_k}{s^{w_k}t_{\lambda_l}}.$$
 
Now for $1\leq k \leq \lambda_l-1$, $k\neq \lambda_1, \lambda_2, \dots,\lambda_{l-1}$ we have $(\lambda_l,k)\in \rm{rev}(\lambda^i)$. Let  $\lambda^{j}=(\lambda_l,k)\lambda^i$. Note that $\lambda^j<\lambda^i$. Recall $c_i$ from \eqref{eq_wli}. 
So $$\frac{t_k s^{w_{\lambda_l}}}{s^{w_k}t_{\lambda_l}}
=\frac{t_{\lambda^{j}}} {t_{\lambda^i}}  s^{w_{\lambda_l} - w_k}
=\frac{t_{\lambda^{j}}} {t_{\lambda^i}}  s^{c_i - c_j}
=\frac{t_{\lambda^{j}}}{t_{\lambda^i}}  t_{\lambda^i}^{\frac{c_i - c_j}{c_i}}
= t_{\lambda^{j}}(t_{\lambda^i})^{-\frac{c_j}{c_i}},$$ since $s^{c_i}=t_{\lambda_1} \cdots t_{\lambda_d} = t_{\lambda^i}$
 and $t_{\lambda^{j}}=t_{\lambda_1} \cdots t_{\lambda_{l-1}} t_k t_{\lambda_{l+1}} \cdots t_{\lambda_d}$. Since $\WG (d,n)$ is divisive and $\lambda^{j} < \lambda^i$, we have $c_i$ divides $c_j$.

Define a $(\CC^{*})^n$-action on $\mathbb{C}^{\ell(\lambda^i)}$ by 
\begin{equation*}
(t_1,t_2,\dots,t_n)(z^l_k)=(t_{\lambda^{j}}(t_{\lambda^i})^{-\frac{c_j}{c_i}}z^l_k)
\end{equation*}
for $1\leq l \leq d; 1\leq k \leq \lambda_l-1; k\neq \lambda_1,\lambda_2,\dots,\lambda_{l-1}$. With this action of $(\CC^{*})^n$ on $\mathbb{C}^{\ell(\lambda^i)}$, the map $g_{\lambda^i}$ becomes $(\CC^{*})^n$-equivariant.

If $\sigma\neq \mbox{Id}$, then consider the cell $$\pi_{w}(\sigma\widetilde{E}(\lambda^i)) \cong \frac{\sigma E(\lambda^i)}{G(\sigma c_i)} \cong  \frac{\sigma E(\lambda^i)}{G(\sigma c_i/\sigma c_i)}\cong \sigma E(\lambda^i),\text{ for all } i=1,2,\dots,m $$ by Lemma \ref{thm_red_loc_gp}. Thus we get the map $\sigma g_{\lambda^i} \colon \mathbb{C}^{\ell(\lambda^i)} \to \pi_w(\sigma \widetilde{E}(\lambda^i))$ defined by ${\bf z}\to \sigma g_{\lambda^i}({\bf z})$. Then by similar arguments, we get the $(\CC^{*})^n$-action on $\mathbb{C}^{\ell(\lambda^i)}$ defined by 
\begin{equation}\label{eq:char_edge}
	(t_1,t_2,\dots,t_n)(z^l_k)=(t_{\sigma\lambda^{j}}(t_{\sigma\lambda^i})^{-\frac{\sigma c_j}{\sigma c_i}}z^l_k).
\end{equation}
\end{proof} 

\begin{corollary}\label{cor:cohom_wgs}
	If ${\rm WGr}(d,n)$ is divisive, then $H^{*}({\rm WGr}(d,n); \mathbb{Z})$ has no torsion and is concentrated in even degrees. 
\end{corollary}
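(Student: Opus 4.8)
The plan is to read this off directly from the cell structure constructed in Theorem \ref{thm:tinv_cell}, reducing everything to a standard fact about CW complexes whose cells occur only in even dimensions. Because $\WG(d,n)$ is divisive, that theorem provides a $(\CC^*)^n$-invariant decomposition in which each piece $\pi_w(\widetilde{E}(\lambda^i))$ is homeomorphic to $\CC^{\ell(\lambda^i)}\cong\RR^{2\ell(\lambda^i)}$: divisibility of the $c_i$ forces the local group $G(c_i/c_i)$ to be trivial, so every cell is an honest even-dimensional cell. Together with the filtration \eqref{filtration}, this exhibits $\WG(d,n)$ as a finite CW complex having one $2\ell(\lambda^i)$-cell for each Schubert symbol $\lambda^i$ and no cells in odd dimensions, with $X_k$ obtained from $X_{k-1}$ by attaching the cell through the characteristic map $g_{\lambda^k}$.

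First I would form the integral cellular chain complex $C_*$ of this CW structure. Since no cell is odd-dimensional, $C_{2j+1}=0$ for all $j$ while $C_{2j}$ is free abelian on the length-$j$ Schubert symbols; hence each boundary homomorphism $\partial_n\colon C_n\to C_{n-1}$ has vanishing source or target and is therefore zero. It follows immediately that $H_{2j}(\WG(d,n);\ZZ)\cong C_{2j}$ is free abelian and $H_{\mathrm{odd}}(\WG(d,n);\ZZ)=0$, so the integral homology is torsion-free and concentrated in even degrees. The universal coefficient theorem then transfers this to cohomology: its $\mathrm{Ext}$ and $\mathrm{Tor}$ contributions vanish because every homology group is free, giving $H^*(\WG(d,n);\ZZ)$ torsion-free and concentrated in even degrees.

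The computation is routine once Theorem \ref{thm:tinv_cell} is available, so the only delicate point will be confirming that the even-dimensional decomposition is a genuine CW structure — with continuous attaching maps and the correct weak topology — so that cellular homology may be invoked. I would handle this through the filtration \eqref{filtration}: each $X_k$ is compact, the restriction of $g_{\lambda^k}$ to the boundary sphere supplies the attaching map into $X_{k-1}$, and the finitely many characteristic maps assemble into the desired structure. Alternatively, one may bypass this by applying Proposition \ref{thm_bnss} with every $G_i$ trivial, for which $\gcd\{p,|G_i|\}=1$ holds for all primes $p$; this yields the absence of $p$-torsion for every $p$ together with $H^{\mathrm{odd}}(\WG(d,n);\ZZ_p)=0$, and torsion-freeness upgrades the latter mod-$p$ vanishing to integral concentration in even degrees.
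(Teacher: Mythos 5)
Your proposal is correct and follows essentially the same route as the paper: the corollary is stated as an immediate consequence of Theorem \ref{thm:tinv_cell} (a finite CW structure with only even-dimensional cells forces vanishing cellular boundary maps, hence free homology concentrated in even degrees), and the paper's subsequent remark notes exactly your alternative, namely deducing it from the proof of Theorem \ref{p torsion in permuted Grassmann} via Proposition \ref{thm_bnss} with all local groups trivial. Both of your arguments, including the care about the attaching maps and the universal coefficient step, match the paper's intent.
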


We remark that Corollary \ref{cor:cohom_wgs} also follows from the proof of Theorem \ref{p torsion in permuted Grassmann} and Definition \ref{def:div_wgs}. However, Theorem \ref{thm:tinv_cell} describes the representation of the $(\CC^{*})^n$-action on each invariant cell explicitly. We also get that a divisive weighted Grassmann orbifold is integrally equivariantly formal.

\section{Equivariant cohomology, cobordism and $K$-theory of weighted Grassmann orbifolds}\label{sec_kth_cobth}

	In this section, first we compute the equivariant $K$-theory ring of any weighted Grassmann orbifold with rational coefficients. Then we compute the equivariant cohomology ring, equivariant $K$-theory ring and equivariant cobordism ring of a divisive weighted Grassmann orbifold with integer coefficients. We discuss the computation of the equivariant Euler classes for some line bundles on a point. We also compute the integral equivariant cohomology ring of some non-divisive weighted Grassmann orbifolds. We adhere the notations of previous sections. 
	
We recall the $(\CC^*)^n$-action on $\WG(d,n)$ which is induced by \eqref{T_act_Gsm}. Consider the standard torus $T^n = (S^1)^n \subset (\CC^*)^n$.	So we have the restricted $T^n$-action on $\WG (d,n)$. For each Schubert symbol $\lambda= (\lambda_1, \lambda_2,\dots, \lambda_d)$, let
$C(\lambda)\in M_d(n, d)$ with column vectors given by $e_{\lambda_1}, e_{\lambda_2} ,\dots, e_{\lambda_d}$ where $\{e_1, e_2, \dots, e_n\}$ is the standard basis for $\CC^n$. Therefore $[C(\lambda)] \in \WG(d, n)$ and it is a fixed point of the $T^n$-action on $\WG(d,n)$.  

\begin{proposition}\label{thm:q-cw_srtucture}
Let ${\rm WGr}(d,n)$ be a weighted Grassmann orbifold corresponding to $W=(w_1,w_2,\dots,w_n)\in(\mathbb{Z}_{\geq0})^n~ \mbox{and}~ a\geq 1$. Then there is a $(\CC^*)^n$-invariant stratification 
\begin{equation*}
\{pt\} = X_0 \subset X_1 \subset X_2 \subset \cdots \subset X_{m}={\rm WGr}(d,n)
\end{equation*}
such that the quotient $X_{i}/X_{i-1}$ 
is homeomorphic to the Thom space $Th(\xi^{i})$
of an orbifold $(\CC^*)^n$-vector bundle 
\begin{equation}\label{eq_orb_vector_bdl_over_pt}
\xi^{i} \colon \CC^{\ell(\lambda^i)}/G(c_i) \to [C(\lambda^i)],
\end{equation}
where $G(c_i)$ is the cyclic group of the $c_i$-th roots of unity,  for $i=1, \dots, m$. 
\end{proposition}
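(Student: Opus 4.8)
The plan is to read the stratification directly off the $(\CC^*)^n$-invariant filtration \eqref{filtration}, and to identify each quotient $X_i/X_{i-1}$ with a one-point compactification. The topological principle I will invoke is standard: if $Y$ is compact Hausdorff and $Z\subset Y$ is closed, then the collapse $Y/Z$ is homeomorphic to the one-point compactification $(Y\setminus Z)^+$, with the class of $Z$ as the point at infinity. I would first verify the hypotheses: $\WG(d,n)$ is compact, being a finite $q$-CW complex by Proposition \ref{lem_q_cell_grasm} (equivalently, a closed subspace of the compact $\WW P(c_0,\dots,c_m)$ via $Pl_w$); each $X_k$ is a closed, hence compact, $(\CC^*)^n$-invariant subcomplex; and $X_{i-1}$ is closed in $X_i$. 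Thus the principle applies to $Y=X_i$ and $Z=X_{i-1}$.

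Next I would make the open stratum and its orbifold bundle structure explicit. By Proposition \ref{lem_q_cell_grasm} and Remark \ref{rmk_bld_seq_mat},
$$X_i\setminus X_{i-1}=\pi_w(\widetilde{E}(\lambda^i))\cong \frac{E(\lambda^i)}{G(c_i)}\cong \frac{\CC^{\ell(\lambda^i)}}{G(c_i)},$$
where the last homeomorphism uses the matrix normal form \eqref{mat_rep} to identify $E(\lambda^i)$ with $\CC^{\ell(\lambda^i)}$ via the free entries $z^l_k$. The $G(c_i)$-action on $\CC^{\ell(\lambda^i)}$ is the residual ambiguity in the normalization: a choice of $s\in\CC^*$ with $s^{c_i}=1$ scales $z^l_k$ by $s^{w_{\lambda_l}-w_k}$, exactly as in the computation of $a_{kl}$ in the proof of Theorem \ref{thm:tinv_cell}. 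This presents $\CC^{\ell(\lambda^i)}/G(c_i)$ as the total space of the orbifold vector bundle $\xi^i$ over the single point $[C(\lambda^i)]$, which is the image $g_{\lambda^i}(0)$ of the origin and is a $(\CC^*)^n$-fixed point, as one verifies directly by normalizing $(t_1,\dots,t_n)\cdot C(\lambda^i)$.

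I would then pin down the $(\CC^*)^n$-action on the fiber. For $(t_1,\dots,t_n)\in(\CC^*)^n$, normalizing the matrix $(t_1,\dots,t_n)\cdot g_{\lambda^i}({\bf z})$ back to the form \eqref{mat_rep} requires an $s\in\CC^*$ with $s^{c_i}=t_{\lambda^i}$ together with a suitable $T\in\mbox{GL}(d,\CC)$, precisely the computation performed in Theorem \ref{thm:tinv_cell}. Since $s$ is only determined up to $G(c_i)$, this action is well defined on $\CC^{\ell(\lambda^i)}/G(c_i)$ and makes $\xi^i$ a $(\CC^*)^n$-equivariant orbifold vector bundle. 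Because its base is a point, the Thom space $Th(\xi^i)$ is by definition the fiberwise one-point compactification, namely $(\CC^{\ell(\lambda^i)}/G(c_i))^+$ with $\infty$ as basepoint and the $(\CC^*)^n$-action extended by fixing $\infty$.

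Finally I would assemble the pieces: the collapse principle gives $X_i/X_{i-1}\cong (X_i\setminus X_{i-1})^+=(\CC^{\ell(\lambda^i)}/G(c_i))^+=Th(\xi^i)$, and since the $(\CC^*)^n$-action preserves the invariant subcomplex $X_{i-1}$ it descends to the quotient, matching the action on $Th(\xi^i)$ and fixing the basepoint; hence the homeomorphism is $(\CC^*)^n$-equivariant. The main obstacle I anticipate is not the abstract collapse but the careful bookkeeping establishing the orbifold $(\CC^*)^n$-vector bundle $\xi^i$: one must check that the matrix normalization descends to a genuine continuous $(\CC^*)^n$-action on $\CC^{\ell(\lambda^i)}/G(c_i)$ independent of the choice of $s$, that this action extends continuously across the point at infinity, and that the identification of $X_i\setminus X_{i-1}$ with the fiber is equivariant. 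These are exactly the places where the weights $w_{\lambda_l}-w_k$ and the finite isotropy $G(c_i)$ must be tracked with care.
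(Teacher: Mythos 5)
Your proposal is correct and follows essentially the same route as the paper's proof: recall the invariant filtration \eqref{filtration}, identify $X_i/X_{i-1}$ with the one-point compactification of the open stratum $E(\lambda^i)/G(c_i)\cong\CC^{\ell(\lambda^i)}/G(c_i)$, and recognize this as the Thom space of the orbifold bundle over the fixed point $[C(\lambda^i)]$. You simply supply more of the justification (compactness via the Pl\"ucker embedding, the collapse-to-compactification principle, and the equivariance bookkeeping borrowed from the proof of Theorem \ref{thm:tinv_cell}) than the paper's terse argument makes explicit.
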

\begin{proof}
Recall the  $(\CC^*)^n$-invariant stratification $$\{pt\} = X_0 \subset X_1 \subset X_2 \subset \cdots \subset X_{m}=\WG(d,n)$$ from \eqref{filtration} which is obtained from the $q$-cell structure of $\WG(d,n)$ as in Lemma \ref{lem_q_cell_grasm}.   
Note that $X_i/X_{i-1}$ is the one point compactification of $\frac{E(\lambda^i)}{G(c_i)}$ which is the Thom space of the orbifold $(\CC^*)^n$-vector bundle $$\frac{E(\lambda^i)}{G(c_i)} \to [C(\lambda^i)]$$ where $[C(\lambda^i)]$ is the $(\CC^*)^n$-fixed point corresponding to the Schubert symbol $\lambda^i$ for $i=1, \ldots, m$. It remains to note that $E(\lambda^i)$ is $(\CC^*)^n$-equivariantly homeomorphic to $\CC^{\ell(\lambda^i)}$, see \eqref{mat_rep}. 
\end{proof}

Now corresponding to $\text{rev}(\lambda^i)$, one can define a subset of Schubert symbols as follows 
\begin{equation}\label{eq_rev_lbd}
	R(\lambda^i):=\{\lambda^j~|~\lambda^j=(k,k')\lambda^i \text{ for } (k,k')\in \text{rev}(\lambda^i)\}.
\end{equation}
 Then the cardinality of the set $R(\lambda^i)$ is $\ell(\lambda^i)$ for every $i\in \{0,1,\dots,m\}$. 
Note that the bundle in \eqref{eq_orb_vector_bdl_over_pt} is also an orbifold $T^n$-bundle.
\begin{proposition}\label{prop:cond_hhh}
The orbifold $T^n$-bundle in \eqref{eq_orb_vector_bdl_over_pt} has a decomposition $$\xi^{i} \colon \frac{\CC^{\ell(\lambda^i)}}{G(c_i)} \to [C(\lambda^i)] \cong \bigoplus_{j: \lambda^j\in R(\lambda^i)}(\xi^{ij} \colon \frac{\CC_{ij}}{G(c_{ij})}\to [C(\lambda^i)]).$$ 
\end{proposition}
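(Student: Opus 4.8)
The plan is to read off the decomposition directly from the explicit coordinates on the fiber $\CC^{\ell(\lambda^i)}$ used in the proof of Theorem~\ref{thm:tinv_cell}. Recall that a point of $E(\lambda^i)\cong\CC^{\ell(\lambda^i)}$ is represented by the normalized matrix of \eqref{mat_rep}, whose free entries $z^l_k$ are indexed by pairs $(k,l)$ with $1\le l\le d$, $1\le k\le \lambda_l-1$ and $k\notin\{\lambda_1,\dots,\lambda_{l-1}\}$. First I would observe that such a pair is precisely a reversal $(\lambda_l,k)\in\text{rev}(\lambda^i)$, so $(k,l)\mapsto \lambda^j:=(\lambda_l,k)\lambda^i$ defines a map from the set of free entries into $R(\lambda^i)$. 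This map is a bijection: the symbol $\lambda^j$ recovers both the deleted index $\lambda_l$ (from $\lambda^i\setminus\lambda^j$) and the inserted index $k$ (from $\lambda^j\setminus\lambda^i$), and both sets have cardinality $\ell(\lambda^i)$. Writing $\CC_{ij}$ for the coordinate line spanned by $z^l_k$, this gives $\CC^{\ell(\lambda^i)}=\bigoplus_{j:\lambda^j\in R(\lambda^i)}\CC_{ij}$.

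Next I would verify that this coordinate decomposition is preserved by every action in sight. Repeating the computation of the proof of Theorem~\ref{thm:tinv_cell}, but now without assuming divisibility so that the isotropy group $G(c_i)$ survives, the residual $(\CC^*)^n$-action scales each entry $z^l_k$ separately; hence every $\CC_{ij}$ is $(\CC^*)^n$-invariant and in particular $T^n$-invariant. As for the isotropy group of the base point, an element $s\in\CC^*$ acts on a normalized representative by $A\mapsto DAT$ with $D=\text{diag}(s^{w_1},\dots,s^{w_n})$; preserving $A_{\lambda^i}=I_d$ forces $T=\text{diag}(s^{-w_{\lambda_1}},\dots,s^{-w_{\lambda_d}})$, and the defining relation $\det T=s^a$ of $\sim_w$ then becomes $s^{a-c_i}=s^a$, i.e.\ $s^{c_i}=1$, which is exactly $s\in G(c_i)$. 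This action is again diagonal, scaling $z^l_k$ by $s^{w_k-w_{\lambda_l}}$, so $G(c_i)$ also respects the decomposition.

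The one computation to carry out carefully is the $G(c_i)$-weight on each line. Since $\lambda^j$ is obtained from $\lambda^i$ by replacing $\lambda_l$ with $k$, formula \eqref{eq_wli} gives $c_j=c_i-w_{\lambda_l}+w_k$, so the scaling factor $s^{w_k-w_{\lambda_l}}$ equals $s^{c_j-c_i}$. The homomorphism $G(c_i)\to\CC^*$, $s\mapsto s^{c_j-c_i}$, has image the cyclic group $G(c_{ij})$ with $c_{ij}=c_i/\gcd(c_i,c_j-c_i)=c_i/\gcd(c_i,c_j)$, and $G(c_i)$ acts on $\CC_{ij}$ effectively through this quotient. (Note that when $\WG(d,n)$ is divisive one has $c_i\mid c_j$ and hence $c_{ij}=1$, recovering the smooth summands of Theorem~\ref{thm:tinv_cell}.) Thus each summand is the orbifold $T^n$-line bundle $\xi^{ij}\colon\CC_{ij}/G(c_{ij})\to[C(\lambda^i)]$.

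The remaining point, which I regard as the only genuinely delicate one, is to make precise the assertion that $\xi^i$ is the direct sum of the $\xi^{ij}$ even though the effective isotropy groups $G(c_{ij})$ differ. I would phrase this by viewing $\xi^i$ as the $T^n$-equivariant $G(c_i)$-representation $\bigoplus_j\CC_{ij}$ and defining $\xi^{ij}$ as the orbifold bundle associated to the one-dimensional subrepresentation $\CC_{ij}$, on which $G(c_i)$ acts through its effective quotient $G(c_{ij})$. The Whitney sum of the $\xi^{ij}$ is then the direct sum of these $G(c_i)$-representations, which is literally $\bigoplus_j\CC_{ij}=\CC^{\ell(\lambda^i)}$ with the diagonal $G(c_i)$-action; taking the global quotient gives back $\xi^i\colon\CC^{\ell(\lambda^i)}/G(c_i)\to[C(\lambda^i)]$, completing the proof.
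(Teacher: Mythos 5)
Your proof is correct, and it reaches the same decomposition as the paper but by a more explicit route. The paper's proof is short: it invokes the fact that $T^n$ is abelian to decompose $E(\lambda^i)$ into one-dimensional representations indexed by $R(\lambda^i)$, and then cites a result (a finite covering $q\colon T^n\to T^n$ through which the projection $E(\lambda^i)\to E(\lambda^i)/G(c_i)$ becomes equivariant) to conclude that the quotient splits as $\bigoplus_j \CC_{ij}/G(c_{ij})$ with $c_{ij}\mid c_i$, without identifying the $c_{ij}$. You instead work entirely in the matrix coordinates of \eqref{mat_rep}: you make explicit the bijection between the free entries $z^l_k$ and the reversals of $\lambda^i$ (which the paper leaves implicit when it indexes the summands by $R(\lambda^i)$), you compute the isotropy group of $[C(\lambda^i)]$ directly and verify it is $G(c_i)$ acting diagonally, and you determine the weight on each line as $s\mapsto s^{c_j-c_i}=s^{c_j}$, giving the sharper statement $c_{ij}=c_i/\gcd(c_i,c_j)$ rather than merely $c_{ij}\mid c_i$. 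This is consistent with Remark \ref{prop_div_cond} (in the divisive case $c_i\mid c_j$ for $\lambda^j\in R(\lambda^i)$, so $c_{ij}=1$), and your direct computation of the isotropy action substitutes for the cited covering-map lemma. Your final paragraph on what ``direct sum'' means when the effective local groups differ addresses a point the paper glosses over; the formulation via subrepresentations of the single $G(c_i)$-representation is the right one. The only caveat is cosmetic: the phrase ``repeating the computation of Theorem \ref{thm:tinv_cell} without assuming divisibility'' should be read as only asserting that the residual action preserves each coordinate line, since the exponents $c_j/c_i$ appearing there need not be integers in general; but that is all your argument uses.
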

\begin{proof}
	Observe that $X_i\setminus X_{i-1}=\frac{E(\lambda^i)}{G(c_i)}\cong\frac{\CC^{\ell(\lambda^i)}}{G(c_i)}$. Since $T^n$ is abelian, the $T^n$ action on $E(\lambda^i)\cong \CC^{\ell(\lambda^i)}$ determines the following decomposition
	$$E(\lambda^i)\cong \bigoplus_{j: \lambda^j \in R(\lambda^i)}\CC_{ij}$$ for some irreducible representation $\CC_{ij}$ of $T^n$. By \cite[Proposition 2.8]{GGKRW} there exists a finite covering map $q\colon T^n\to T^n$ such that the projection map $\phi\colon {E(\lambda^i)} \to \frac{E(\lambda^i)}{G(c_i)} $ is equivariant via the map $q$ (i.e., $\phi(tx)=q(t)\phi(x)$).
	  Therefore,
	   $$\frac{E(\lambda^i)}{G(c_i)}\cong \bigoplus_{j:\lambda^j\in R(\lambda^i)}\frac{\CC_{ij}}{G(c_{ij})}$$ for some positive integers $c_{ij}$ which divides $c_i$. Hence the proof follows.	
\end{proof}

\begin{remark}\label{rem_eq_strati}
\begin{enumerate}
\item The attaching map $\eta_{i} \colon S(\xi^i) \to X_{i-1}$ for the $q$-cell structure in \eqref{filtration} satisfies $\eta_i|_{S(\xi^{ij})}=f_{ij}.\xi^{ij}$ where $f_{ij}\colon [C(\lambda^i)]\to [C(\lambda^j)]$ is the constant map.

\item The equivariant Euler classes $\{e_{T^n}(\xi^{ij})~|~j<i\}$ are non zero divisors. They are pairwise prime by \cite[Lemma 5.2]{HHH} and the $T^n$-action on $E(\lambda^i)$ discussed in the proof of Theorem \ref{thm:tinv_cell}.
\end{enumerate}
\end{remark}

\begin{theorem}\label{prop_GKM-description}
	Let ${\rm WGr}(d,n)$ be a weighted Grassmann orbifold for $d <n$ corresponding to $W=(w_1,w_2,\dots,w_n)\in(\mathbb{Z}_{\geq0})^n~ \mbox{and}~ a\geq 1$.  
Then the generalized $T^n$-equivariant cohomology $\mathcal{E}^\ast_{T^n}({\rm WGr}(d,n);\QQ)$ 
	 can be given by
	\begin{equation*}
 \Big \{(f_i) \in \bigoplus_{i=0}^{m} \mathcal{E}^*_{T^n}(pt;\QQ) ~ \big{|} ~ e_{T^n}(\xi^{ij })~
		\mbox{divides} ~ f_i - f_j ~\mbox{for }~ j < i ~\mbox{and}~  |\lambda^j\cap \lambda^i|=d-1 \Big\}
	\end{equation*}
for $\mathcal{E}^\ast_{T^n}=K^*_{T^n}$, $H^*_{T^n}$, where $e_{T^n}(\xi^{ij})$ represents the equivariant Euler class of $\xi^{ij}$.
\end{theorem}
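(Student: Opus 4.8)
The plan is to assemble the statement from the $(\CC^*)^n$-invariant stratification already produced and the abstract Thom-space machinery of Harada--Henriques--Holm \cite{HHH}. Concretely, I would take the filtration $\{pt\}=X_0\subset X_1\subset\cdots\subset X_m=\WG(d,n)$ of Proposition \ref{thm:q-cw_srtucture}, in which each quotient $X_i/X_{i-1}$ is the Thom space of the orbifold bundle $\xi^i$ of \eqref{eq_orb_vector_bdl_over_pt}, and run the induction that computes $\mathcal{E}^*_{T^n}(X_i;\QQ)$ from $\mathcal{E}^*_{T^n}(X_{i-1};\QQ)$ via the cofibre sequence $X_{i-1}\hookrightarrow X_i\to Th(\xi^i)$. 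The outcome to establish is that restriction to the $T^n$-fixed points $\{[C(\lambda^i)]\}_{i=0}^m$ is injective with image the prescribed ring; I would carry this out uniformly for $\mathcal{E}^*_{T^n}=H^*_{T^n}$ and $K^*_{T^n}$, since both admit the equivariant Thom isomorphism and Euler classes required to feed \cite{HHH}.

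The first technical input is the reduced cohomology of each Thom space. Here the choice of rational coefficients is what makes the orbifold setting tractable: the finite cyclic groups $G(c_i)$ and $G(c_{ij})$ appearing in $\xi^i$ and its summands are rationally invisible (for a finite group $G$ one has $\widetilde{\mathcal{E}}^{>0}(BG;\QQ)=0$ in both $H^*$ and $K^*$), so $\xi^i$ behaves like a genuine $T^n$-bundle and $\widetilde{\mathcal{E}}^*_{T^n}(Th(\xi^i);\QQ)$ is the free rank-one module over $\mathcal{E}^*_{T^n}(pt;\QQ)$ on the Thom class, whose restriction to $[C(\lambda^i)]$ is the Euler class $e_{T^n}(\xi^i)=\prod_{\lambda^j\in R(\lambda^i)}e_{T^n}(\xi^{ij})$, using the splitting of Proposition \ref{prop:cond_hhh}. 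Because each $e_{T^n}(\xi^{ij})$ is a non--zero divisor (Remark \ref{rem_eq_strati}), the long exact sequence of the pair $(X_i,X_{i-1})$ collapses to a short exact sequence at each stage; inducting from $X_0=\{pt\}$ then shows $\mathcal{E}^*_{T^n}(\WG(d,n);\QQ)$ is free over $\mathcal{E}^*_{T^n}(pt;\QQ)$ and that restriction to the fixed points is injective.

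It remains to identify the image with the GKM ring. At the $i$-th stage the short exact sequence says that, once a class $\alpha\in\mathcal{E}^*_{T^n}(X_{i-1};\QQ)$ with fixed-point data $(f_0,\dots,f_{i-1})$ is given, the values $f_i$ realized by extensions of $\alpha$ to $X_i$ form a single coset of the ideal $(e_{T^n}(\xi^i))$; and by the description of the attaching map in Remark \ref{rem_eq_strati}(1) (it factors through the constant maps to $[C(\lambda^j)]$ on each sphere summand $S(\xi^{ij})$) a distinguished extension restricts on each edge so that this coset is $\{f_i : f_i\equiv f_j \pmod{e_{T^n}(\xi^{ij})}\ \text{for every } \lambda^j\in R(\lambda^i)\}$. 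This is where the pairwise primeness of the $\{e_{T^n}(\xi^{ij})\}$ from Remark \ref{rem_eq_strati}(2) (via \cite[Lemma 5.2]{HHH} and the explicit characters computed in the proof of Theorem \ref{thm:tinv_cell}) is essential: a Chinese Remainder argument turns divisibility of $f_i-f_j$ by the single product $e_{T^n}(\xi^i)$ into the family of individual congruences by the factors $e_{T^n}(\xi^{ij})$. Finally I would record that, by the definition of a reversal \eqref{eq_rev_lbd}, $\lambda^j\in R(\lambda^i)$ holds exactly when $\lambda^j$ arises from $\lambda^i$ by changing a single entry, that is $j<i$ and $|\lambda^j\cap\lambda^i|=d-1$, which is precisely the index set in the congruences.

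I expect the surjectivity half --- that \emph{every} tuple satisfying the displayed divisibility conditions lifts all the way up the filtration --- to be the main obstacle, and it is exactly the Chinese Remainder step that carries it: it is the pairwise-prime hypothesis on the Euler classes, rather than their mere non-vanishing, that guarantees the per-edge congruences can be met simultaneously at each stage. The injectivity half, by contrast, is routine once the Euler classes are known to be non--zero divisors and the successive quotients are free and evenly graded.
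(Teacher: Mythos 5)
Your proposal is correct and follows essentially the same route as the paper: the paper's proof simply invokes \cite[Proposition 2.3]{SaSo} together with Proposition \ref{thm:q-cw_srtucture}, Proposition \ref{prop:cond_hhh} and Remark \ref{rem_eq_strati}, and your argument is precisely an unwinding of that cited result --- the inductive Thom-space computation over the filtration, with rational coefficients rendering the finite isotropy groups invisible, and the non-zero-divisor plus pairwise-prime properties of the Euler classes feeding the Chinese Remainder step for surjectivity. Nothing essential is missing.
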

\begin{proof}
	 This follows from  \cite[Proposition 2.3]{SaSo} using Proposition \ref{thm:q-cw_srtucture}, \ref{prop:cond_hhh} and Remark \ref{rem_eq_strati}. 
\end{proof}

We note that equivariant cohomology ring of $\WG(d,n)$ with rational coefficients is discussed in \cite{AbMa1}. In the rest, we give a description of the equivariant cohomology ring, equivariant $K$-theory ring and equivariant cobordism ring of a divisive weighted Grassmann orbifold with integer coefficients.

\begin{proposition}\label{thm:cw_srtucture}
	Let ${\rm WGr}(d,n)$ be a divisive weighted Grassmann orbifold for $d < n$ corresponding to $W=(w_1, w_2, \dots, w_n) \in (\mathbb{Z}_{\geq 0})^n$ and $ a\geq 1$. Then there is a $T^n$-invariant stratification 
	\begin{equation*}
		\{pt\} = X_0 \subset X_1 \subset \cdots \subset X_{m}={\rm WGr}(d,n)
	\end{equation*}
	such that the quotient $X_{i}/X_{i-1}$ 
	is homeomorphic to the Thom space $Th(\xi^{i})$
	of the $T^n$-vector bundle 
	\begin{equation*}
		\xi^{i} \colon \CC^{\ell(\lambda^i)} \to [C(\lambda^i)],
	\end{equation*}
	 for $i=1, \dots, m$. 
\end{proposition}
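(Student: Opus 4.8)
The plan is to obtain this statement as the group-free refinement of Proposition \ref{thm:q-cw_srtucture}, using the divisive hypothesis precisely to collapse each orbifold local group $G(c_i)$ to the trivial group. I would reuse the $(\CC^*)^n$-invariant filtration \eqref{filtration}, restrict the action to $T^n = (S^1)^n \subset (\CC^*)^n$, and then identify each successive quotient $X_i/X_{i-1}$ as a Thom space, but now of an honest $T^n$-vector bundle rather than an orbifold bundle.

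First I would reduce to the case $\sigma = \mathrm{Id}$, exactly as in the proof of Theorem \ref{thm:tinv_cell}; the general case follows by the same argument applied to the permuted cell structure $\{\sigma E(\lambda^i)\}$. Under $\sigma = \mathrm{Id}$ the divisive condition of Definition \ref{def:div_wgs} reads $c_i \mid c_{i-1}$ for all $i$, hence $c_m \mid c_{m-1} \mid \cdots \mid c_0$. Consequently $r_i := \gcd\{c_0, c_1, \dots, c_i\} = c_i$ for every $i$, since $c_i$ divides each of $c_0, \dots, c_{i-1}$. This is the crux of the argument: Lemma \ref{thm_red_loc_gp} then yields
$$\pi_w(\widetilde{E}(\lambda^i)) \cong \frac{E(\lambda^i)}{G(c_i)} \cong \frac{E(\lambda^i)}{G(c_i/r_i)} = \frac{E(\lambda^i)}{G(1)} \cong E(\lambda^i) \cong \CC^{\ell(\lambda^i)},$$
where the last homeomorphism is the matrix parametrization \eqref{mat_rep}. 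Thus each $q$-cell is in fact a genuine cell carrying no orbifold singularity.

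Next I would assemble the Thom space structure exactly as in Proposition \ref{thm:q-cw_srtucture}. The quotient $X_i/X_{i-1}$ is the one-point compactification of $X_i \setminus X_{i-1} \cong E(\lambda^i) \cong \CC^{\ell(\lambda^i)}$. Since the base $[C(\lambda^i)]$ is a single $T^n$-fixed point, this compactification is precisely the Thom space $Th(\xi^i)$ of the $T^n$-equivariant vector bundle $\xi^i \colon \CC^{\ell(\lambda^i)} \to [C(\lambda^i)]$. The relevant $T^n$-action on the fiber $\CC^{\ell(\lambda^i)}$ is the one computed in Theorem \ref{thm:tinv_cell}, namely \eqref{eq:char_edge}, which confirms both that $\xi^i$ is a legitimate $T^n$-vector bundle and that the filtration is $T^n$-invariant.

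I do not expect a serious obstacle here, since the essential work was already carried out in Theorem \ref{thm:tinv_cell} and Lemma \ref{thm_red_loc_gp}. The only point demanding care is the bookkeeping that the divisive condition forces $r_i = c_i$, so that $G(c_i/r_i)$ is trivial; once this is verified, the construction is formally identical to the orbifold case of Proposition \ref{thm:q-cw_srtucture}, with the orbifold bundles replaced throughout by the ordinary $T^n$-vector bundles obtained above.
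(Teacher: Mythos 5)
Your proposal is correct and follows essentially the same route as the paper: both use the divisive condition to deduce $\gcd\{c_0,\dots,c_i\}=c_i$ (resp.\ $\sigma c_i$), invoke Lemma \ref{thm_red_loc_gp} to trivialize each local group $G(c_i)$ so that every $q$-cell becomes an honest cell $\CC^{\ell(\lambda^i)}$, and then import the Thom-space identification of $X_i/X_{i-1}$ verbatim from the proof of Proposition \ref{thm:q-cw_srtucture}. The only cosmetic difference is that you reduce to $\sigma=\mathrm{Id}$ up front while the paper carries the permutation $\sigma$ through the argument via Theorem \ref{lem_euiv_sn}; the content is identical.
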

\begin{proof}
Since $\WG(d, n)$ is divisive, there exists $\sigma \in S_n$ such that $\sigma c_i$ divides $\sigma c_{i-1}$ for $i=1,2,\dots,m$. Then $\gcd\{\sigma c_0,\sigma c_1,\dots,\sigma c_i\}=\sigma c_i$ for all $i$. By Theorem \ref{lem_euiv_sn}, one can write $\WG(d,n)=\sqcup_{i=0}^m\frac{\sigma E(\lambda^i)}{G(\sigma c_i)}$. By Lemma \ref{thm_red_loc_gp}, the $q$-cell $\sigma E(\lambda^i)/G(\sigma c_i)$ is homeomorphic to $\sigma E(\lambda^i)/G(\frac{\sigma c_i}{\sigma c_i})\cong \CC^{\ell(\lambda^i)}$ for $i=1, \ldots, m$. Let $X_k=\sqcup_{i=0}^k\frac{\sigma E(\lambda^i)}{G(\sigma c_i)}$ for $i=0,1, \ldots, m$. Remaining follows from the proof of Proposition \ref{thm:q-cw_srtucture}. 	
\end{proof}

\begin{remark}\label{prop_div_cond}
For a divisive weighted Grassmann orbifold, Proposition \ref{prop:cond_hhh} and Remark \ref{rem_eq_strati} hold with $c_{ij}=1$ for every $j<i$.
\end{remark}

\begin{theorem}\label{prop_GKM-description_Z}
		Let ${\rm WGr}(d,n)$ be a divisive weighted Grassmann orbifold for $d <n$.  
Then the generalized $T^n$-equivariant cohomology $\mathcal{E}^\ast_{T^n}({\rm WGr}(d,n); \ZZ)$ 
	 can be given by
	\begin{equation*}
 \Big \{(f_i) \in \bigoplus_{i=0}^{m} \mathcal{E}^*_{T^n}(pt; \ZZ) ~ \big{|} ~ e_{T^n}(\xi^{ij})~
		\mbox{divides} ~ f_i - f_j ~\mbox{for }~ j < i~\mbox{and}~  |\lambda^j\cap \lambda^i|=d-1\Big\}
	\end{equation*}
for $\mathcal{E}^\ast_{T^n}= H^\ast_{T^n}, K^*_{T^n}$ and $MU^*_{T^n}$.
\end{theorem}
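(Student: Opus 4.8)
The plan is to run the same argument as in Theorem \ref{prop_GKM-description}, but to exploit the stronger structure available in the divisive case so as to upgrade the conclusion from rational to integral coefficients and to include the cobordism theory $MU^*_{T^n}$. The decisive simplification is that, for a divisive ${\rm WGr}(d,n)$, the $q$-cells become honest cells: by Proposition \ref{thm:cw_srtucture} the filtration $\{pt\}=X_0\subset\cdots\subset X_m={\rm WGr}(d,n)$ has successive quotients $X_i/X_{i-1}$ homeomorphic to the Thom space of an honest $T^n$-vector bundle $\xi^i\colon\CC^{\ell(\lambda^i)}\to[C(\lambda^i)]$, and by Remark \ref{prop_div_cond} this bundle splits as a sum of honest line bundles $\xi^{ij}$ (that is, with $c_{ij}=1$) indexed by $\lambda^j\in R(\lambda^i)$. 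The indexing set $\{\lambda^j\in R(\lambda^i)\}$ is precisely $\{j<i : |\lambda^j\cap\lambda^i|=d-1\}$, since a reversal alters a single entry of $\lambda^i$.

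First I would record that the relevant hypotheses of the general structure theorem \cite[Proposition 2.3]{SaSo}, itself modeled on \cite{HHH}, are now met integrally. By Remark \ref{rem_eq_strati}, upgraded via Remark \ref{prop_div_cond} to the divisive setting, the equivariant Euler classes $e_{T^n}(\xi^{ij})$ are non-zero divisors in $\mathcal{E}^*_{T^n}(pt;\ZZ)$ and are pairwise prime. Because $\mathcal{E}^*_{T^n}$ is complex-oriented for each of $H^*_{T^n}$, $K^*_{T^n}$ and $MU^*_{T^n}$, each honest line bundle $\xi^{ij}$ carries a Thom class, and the Thom isomorphism identifies $\widetilde{\mathcal{E}}^*_{T^n}(Th(\xi^i))$ with $\mathcal{E}^*_{T^n}(pt;\ZZ)$ up to the appropriate degree shift.

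Next I would feed this data into the inductive argument on the filtration. For each $i$ one has the cofiber sequence $X_{i-1}\hookrightarrow X_i\to Th(\xi^i)$ and the associated long exact sequence in $\mathcal{E}^*_{T^n}(-;\ZZ)$. The key point, and the reason integrality now holds, is that the divisive hypothesis forces the whole complex to be concentrated in even dimensions (Corollary \ref{cor:cohom_wgs} together with the proof of Theorem \ref{thm:tinv_cell}), so the connecting homomorphisms vanish and each long exact sequence breaks into a short exact sequence. Inductively this shows that the restriction $\mathcal{E}^*_{T^n}({\rm WGr}(d,n);\ZZ)\to\bigoplus_{i=0}^m\mathcal{E}^*_{T^n}(pt;\ZZ)$ to the $T^n$-fixed points is injective, and that an element $(f_i)$ lies in the image exactly when, at each stage, $f_i-f_j$ is divisible by $e_{T^n}(\xi^{ij})$ for every edge $\lambda^j\in R(\lambda^i)$, i.e.\ for every $j<i$ with $|\lambda^j\cap\lambda^i|=d-1$. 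This is the asserted description, valid simultaneously for $\mathcal{E}^*_{T^n}=H^*_{T^n}$, $K^*_{T^n}$ and $MU^*_{T^n}$.

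The main obstacle is the integral splitting of the filtration sequences. Over $\QQ$ (Theorem \ref{prop_GKM-description}) the finite isotropy groups $G(c_i)$ contribute no torsion and the degeneration is automatic; over $\ZZ$ one genuinely needs the cells to be honest, which is exactly what the divisive hypothesis supplies through Proposition \ref{thm:cw_srtucture}. The second, more routine, point to verify is that the general theorem \cite[Proposition 2.3]{SaSo}, \cite{HHH} applies to $MU^*_{T^n}$: this reduces to checking that $MU^*_{T^n}$ is complex-oriented with Euler classes that remain non-zero divisors and pairwise prime, which follows from the explicit $T^n$-representations on the cells computed in \eqref{eq:char_edge}.
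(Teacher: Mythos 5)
Your proposal is correct and follows essentially the same route as the paper: the paper's proof simply cites Proposition \ref{thm:cw_srtucture}, Remark \ref{prop_div_cond} and \cite[Theorem 2.3]{HHH}, which is exactly the combination you use (honest $T^n$-cells from the divisive hypothesis, splitting of $\xi^i$ into line bundles $\xi^{ij}$ with $c_{ij}=1$ indexed by $R(\lambda^i)$, and the non-zero-divisor/pairwise-prime Euler class conditions feeding into the HHH-type GKM theorem). The extra detail you supply about the cofiber sequences and Thom classes is just an unpacking of what the cited general theorem does internally.
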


\begin{proof}
	This follows from Proposition \ref{thm:cw_srtucture}, Remark \ref{prop_div_cond} and \cite[Theorem 2.3]{HHH}.
\end{proof}

\begin{remark}\label{rem:div_cond}
Let $\lambda^i$ and $\lambda^j$ be two Schubert symbols with $j<i$. If ${\rm WGr} (d,n)$ is a divisive weighted Grassmann orbifold then there exists a permutation $\sigma\in S_n$ such that $\sigma c_i$ divides $\sigma c_j$. We denote $\sigma d_{ij} :=\frac{\sigma c_j}{\sigma c_i} \in \ZZ$. 
\end{remark}

 Next we discuss how to compute $e_{T^n}(\xi^{ij})$. We recall that $$H_{T^n}^*(pt;\ZZ)=H^{*}(BT^n;\ZZ)\cong \ZZ[y_1, y_2, \dots, y_n]$$ where $y_1, y_2, \dots, y_n$ be the standard basis of $H^2(BT^n;\ZZ)$. 
 Using \eqref{eq:char_edge} the character of the one-dimensional representation for the bundle $\xi^{ij}$ is given by
 \begin{equation}\label{eq_one_dim_rep}
 	(t_1,t_2,\dots,t_n)\to t_{\sigma\lambda^{j}}(t_{\sigma\lambda^i})^{-\frac{\sigma c_j}{\sigma c_i}}.
 \end{equation}
 
 Also
 $$K^*_{T^n} (pt) \cong R(T^n)[z, z^{-1}]$$ where $R(T^n)$ is the complex representation ring of $T^n$ and $z$ is the Bott element in $K^{-2}(pt)$. Note that $R(T^n)$ is isomorphic to the ring of Laurent polynomials with $n$-variables, that is $R(T^n) \cong \ZZ[\alpha_1, \ldots, \alpha_n]_{(\alpha_1 \cdots \alpha_n)}$, where  $\alpha_i$ is the irreducible representation corresponding to the projection on the $i$-th factor, see \cite{Hus}. Therefore, using \eqref{eq_one_dim_rep} one has the following.

\begin{align}\label{eq_eu_cl}
e_{T^n}(\xi^{ij}) = 
     \begin{cases}
       1-  \alpha_{\sigma \lambda^{j}} \alpha_{\sigma \lambda^i}^{-\sigma d_{ij}} & \quad\text{in}~  K^0_{T^n} \\
           e_{T^n}(\alpha_{\sigma \lambda^{j}} \alpha_{\sigma\lambda^i}^{-\sigma d_{ij}})  &\quad\text{in}~ MU^2_{T^n}\\
Y_{\sigma \lambda^j} - \sigma d_{ij}Y_{\sigma \lambda^i} & \quad\text{in}~ H^2_{T^n}
     \end{cases}
\end{align}
for $j < i$ and  $|\lambda^j\cap \lambda^i|=d-1$,
where $Y_{\lambda}:=\sum_{i=1}^{d} y_{\lambda_i}$ and $\alpha_{\lambda} = \alpha_{\lambda_1} \cdots \alpha_{\lambda_d}$ for a Schubert symbol $\lambda=(\lambda_1, \ldots, \lambda_d)$.
 
We remark that the structure of  $MU_{T^n}^\ast(pt)$ is unknown, however 
it is referred as the ring of \emph{$T^n$-cobordism forms} in \cite{HHRW}.

\begin{example}
	Consider the weighted Grassmann orbifold $\WG (2,4)$ for $W=(12,2,2,2)$ and $a=6$. We have the ordering on the $6$ Schubert symbols given by 
	$$\lambda^0=(1,2)<\lambda^1=(1,3)<\lambda^2=(1,4)<\lambda^3= (2,3)< \lambda^4=(2,4)<\lambda^5=(3,4).$$
	 Then $c_0=20, c_1=20, c_2=20, c_3=10, c_4=10, c_5=10$ from \eqref{eq_wli}. Here $c_i$ divides $c_{i-1}$ for all $i=1,2,3,4,5$. Thus $\WG(2,4)$ is divisive for the identity permutation in $S_4$. 
	Then $d_{ij}=\frac{c_j}{c_i}$ in Remark \ref{rem:div_cond} gives
	\begin{align*}
		d_{ij} = 
		\begin{cases}
			1 &  \quad\text{if } j<i \text{ and both}~ i,j\in\{0,1,2\} ~\mbox{or},~\{3,4,5\}   \\
			2  &\quad \mbox{if}~ j\in\{0,1,2\}~\mbox{and}~ i\in \{3,4,5\}.			
		\end{cases}
	\end{align*}
Then one can calculate the equivariant Euler class $e_{T^n}(\xi^{ij})$ from \eqref{eq_eu_cl}. The generalized integral equivariant cohomology ring $\mathcal{E}_{T^n}^*(\WG(2,4);\ZZ)$ of this divisive weighted Grassmann orbifold $\WG (2,4)$ can be described by Theorem \ref{prop_GKM-description_Z}. \qed
\end{example}

The following result gives equivariant cohomology ring of some non-divisive weighted Grassmann orbifolds  with integer coefficients.
\begin{theorem}\label{th_eq_coh_non_div}
	Let ${\rm WGr} (d,n)$ be a weighted Grassmann orbifold corresponding to the order $\lambda^0 < \cdots < \lambda^m$ such that $c_i | c_k$ for $k \leq i$ and $i\geq 2$ but $c_1$ does not divide $c_0$. Then the integral equivariant cohomology ring of  ${\rm WGr} (d,n)$ is given by 
	\begin{align*}
		& H_{T^n}^{*}({\rm WGr} (d,n);\mathbb{Z})\\ & =\{(f_i)\in \bigoplus_{i=0}^{m}\mathbb{Z}[y_1,y_2,\dots, y_n]~\big{|}~(Y_{\lambda^j}-d_{ij}Y_{\lambda^i})~\mbox{divides}~(f_i-f_j) \text{ if } j<i, \nonumber \\ & |\lambda^j\cap \lambda^i|=d-1; (i,j)\neq(0,1)~\mbox{and}~ c_1Y_{\lambda^0}-c_0Y_{\lambda^1}~\text{divides}~(f_1-f_0)\}.  
	\end{align*}
\end{theorem}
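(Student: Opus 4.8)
The plan is to mimic the proof of Theorem~\ref{prop_GKM-description_Z}, isolating the single non-divisive step and treating it separately. First I would record how the hypotheses constrain the $q$-cell structure of the filtration \eqref{filtration}. Since $c_i \mid c_k$ for every $k \le i$ with $i \ge 2$, we get $\gcd\{c_0,\dots,c_i\} = c_i$, so Lemma~\ref{thm_red_loc_gp} reduces each $q$-cell $E(\lambda^i)/G(c_i)$ with $i \ge 2$ to an honest cell $E(\lambda^i) \cong \CC^{\ell(\lambda^i)}$. Because $c_1 \nmid c_0$, however, $\gcd\{c_0,c_1\} < c_1$ and the cell attached at the first stage retains nontrivial isotropy; concretely $X_1 \cong \WW P(c_0,c_1)$ is a (singular) weighted projective line. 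Thus $X_0 \subset X_1 \subset \dots \subset X_m$ is built from honest even-dimensional cells apart from the one orbifold cell inside $X_1$, which is exactly the feature that prevents a direct appeal to \cite[Theorem~2.3]{HHH}.

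Next I would compute $H^*_{T^n}(X_1;\ZZ)$ by hand. The space $X_1 \cong \WW P(c_0,c_1)$ has the two $T^n$-fixed points $[C(\lambda^0)]$ and $[C(\lambda^1)]$, and by Proposition~\ref{thm:q-cw_srtucture} the quotient $X_1/X_0$ is the Thom space of the orbifold $T^n$-line bundle $\xi^1 \colon \CC^{\ell(\lambda^1)}/G(c_1) \to [C(\lambda^1)]$. The crux is its integral equivariant Euler class. Using the orbifold chart of $\WW P(c_0,c_1)$ around $[C(\lambda^1)]$ together with the action \eqref{T_act}, the $T^n$-weight on the uniformizing coordinate is $t_{\lambda^0}(t_{\lambda^1})^{-c_0/c_1}$, as in \eqref{eq_one_dim_rep}; passing to the associated honest line bundle to clear the fractional exponent yields $e_{T^n}(\xi^1) = c_1 Y_{\lambda^0} - c_0 Y_{\lambda^1}$ in $H^2_{T^n}(pt;\ZZ)$. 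Since the Thom space has cohomology concentrated in even degrees, the connecting maps in the long exact sequence of $(X_1,X_0)$ vanish, the Thom isomorphism applies, and I would conclude that a pair $(f_0,f_1)$ lies in $H^*_{T^n}(X_1;\ZZ)$ if and only if $c_1 Y_{\lambda^0} - c_0 Y_{\lambda^1}$ divides $f_1 - f_0$.

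I would then run the inductive argument of \cite[Theorem~2.3]{HHH} on the remaining stages $X_1 \subset X_2 \subset \dots \subset X_m$, which now attach only honest even cells. As in the divisive case (Proposition~\ref{thm:cw_srtucture} and Remark~\ref{prop_div_cond}), each $\xi^i$ splits into $T^n$-line bundles $\xi^{ij}$ indexed by $R(\lambda^i)$, and by \eqref{eq:char_edge} and \eqref{eq_eu_cl} their Euler classes are $e_{T^n}(\xi^{ij}) = Y_{\lambda^j} - d_{ij} Y_{\lambda^i}$, with $d_{ij} = c_j/c_i \in \ZZ$ an integer precisely because $c_i \mid c_j$ for $j \le i$, $i \ge 2$. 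I would verify that these classes, together with the anomalous class $c_1 Y_{\lambda^0} - c_0 Y_{\lambda^1}$, remain pairwise coprime non-zero-divisors (via \cite[Lemma~5.2]{HHH} and the explicit weights), so that restriction to the fixed point set is injective with image cut out by the divisibility conditions over the edges $j<i$ with $|\lambda^j \cap \lambda^i| = d-1$. Splicing the $(1,0)$ condition from the previous step into the generic edge conditions then produces the claimed presentation of $H^*_{T^n}(\WG(d,n);\ZZ)$.

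The main obstacle is the integral Euler-class computation for the orbifold cell in $X_1$: unlike every other cell, $\xi^1$ is a genuine orbifold line bundle, so the validity of the Thom isomorphism over $\ZZ$ and the identification $e_{T^n}(\xi^1) = c_1 Y_{\lambda^0} - c_0 Y_{\lambda^1}$ must be argued directly rather than read off from \eqref{eq_eu_cl}. A secondary point that needs care is confirming that replacing the divisive edge $Y_{\lambda^0} - d_{10}Y_{\lambda^1}$ by $c_1 Y_{\lambda^0} - c_0 Y_{\lambda^1}$ does not disturb the coprimality hypotheses required by \cite[Theorem~2.3]{HHH}, so that the inductive description over the honest cells still goes through unchanged.
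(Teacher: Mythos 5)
Your strategy runs parallel to the paper's up to the treatment of the first stage: both arguments use the hypothesis $c_i\mid c_k$ for $k\le i$, $i\ge 2$, together with Lemma \ref{thm_red_loc_gp}, to turn every $q$-cell $E(\lambda^i)/G(c_i)$ with $i\ge 2$ into an honest cell $\CC^{\ell(\lambda^i)}$, and both isolate $X_1\cong \WW P(c_0,c_1)$ as the single problematic piece. The divergence is in how that piece is handled. The paper never touches the orbifold bundle $\xi^1$ directly: it observes that the weighted projective line $\WW P(c_0,c_1)$ admits a $T^n$-invariant \emph{honest} cell structure, so the entire filtration of ${\rm WGr}(d,n)$ becomes a genuine invariant even-cell complex, and then the whole GKM-type presentation --- including the anomalous edge condition $c_1Y_{\lambda^0}-c_0Y_{\lambda^1}\mid f_1-f_0$ --- is quoted from \cite[Theorem 2.9]{DKS}. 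You instead keep the orbifold $q$-cell at stage one, propose an integral Thom isomorphism and Euler class $c_1Y_{\lambda^0}-c_0Y_{\lambda^1}$ for the orbifold line bundle $\xi^1$, and then run the \cite[Theorem 2.3]{HHH} induction over the remaining honest cells. Your target formulas are the right ones, so the route is viable in principle and is arguably more self-contained.

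However, the step you flag as ``the main obstacle'' is a genuine gap rather than a deferred routine verification. An integral Thom isomorphism for an orbifold cell $\CC/G\to pt$ is exactly what fails in general --- it is the reason \cite{HHH} insists on honest cells and the reason this paper introduces divisiveness in the first place --- and ``passing to the associated honest line bundle to clear the fractional exponent'' does not by itself produce a free rank-one $H^*_{T^n}(pt;\ZZ)$-module structure on $\widetilde{H}^*_{T^n}(X_1/X_0;\ZZ)$ with the stated generator. To close this you would need to either compute $H^*_{T^n}(\WW P(c_0,c_1);\ZZ)$ directly (for instance via the piecewise-polynomial description of \cite{BFR}, or via an explicit invariant CW structure on $\WW P(c_0,c_1)$ as the paper does before citing \cite{DKS}) and then splice the resulting condition into the inductive step, or reproduce the paper's substitution of the orbifold $q$-cell by honest invariant cells. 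A secondary caution, which you correctly note: when $\gcd(c_0,c_1)>1$ the class $c_1Y_{\lambda^0}-c_0Y_{\lambda^1}$ is not primitive, so the ``pairwise relatively prime'' hypothesis of \cite[Theorem 2.3]{HHH} must be re-examined rather than assumed to carry over from the divisive case.
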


\begin{proof}
	By the given condition $\gcd\{c_0,c_1,\dots,c_i\}=c_i$ for $i\geq 2$. So, by Lemma \ref{thm_red_loc_gp}, we have $E(\lambda^i)/G(c_i)$ is homeomorphic to $E(\lambda^i)/G(c_i/c_i)\cong \CC^{\ell(\lambda^i)}$  
	for  $i=1, \ldots, m$.
	When $i=1,$ we have $X_1$ is equivariantly homeomorphic to $\WW P(c_0,c_1)$. Therefore it has a $T^n$-invariant cell structure. Thus by \cite[Theorem 2.9]{DKS}, we get the result.
\end{proof}

Next we discuss the equivariant cohomology ring of the weighted projective space $\WW P(b_0,b_1 \ldots, b_m)$ where $(b_0,b_1 \ldots, b_m)\in (\ZZ_{\geq 0})^{m+1}$ for several torus actions. By remark \ref{rmk_wgt_pr_sp}, $\WW P(b_0,b_1 \ldots, b_m)=\WG(1,m+1)$ associated to the weight $W=(b_0-1,\dots,b_m-1)$ and $a=1$. The Schubert symbols for $1<m+1$ are $\{1\},\ldots,\{m\} \text{ and } \{m+1\}$. Assume that $\WG(1,m+1)$ is divisive corresponding to this order (i.e $b_i$ divides $b_{i-1}$, for $i=1,2,\dots,m$). Then $$E(i+1)\cong \{[(u_0,u_1,\dots,u_{i-1},1,0,\dots,0)]\in \WW P(b_0,b_1 \ldots, b_m)\}\cong \CC^i$$
for $i=0,1,\dots,m$.

Let $(n, d)$ be the pair such that $d<n$ and ${n \choose{d}}=m+1$. Then \eqref{T_act} gives a $T^{n}$- action on $\WW P(b_0,b_1 \ldots, b_m)$. Recall $t_{\lambda^i}$ from \eqref{T_act} for the Schubert symbols $\lambda^0, \lambda^1,\dots,\lambda^m$ corresponding to $d<n$. Therefore we have the following,
\begin{align*}
	&(t_1,t_2,\dots ,t_n)[(u_0,u_1,\dots,u_{i-1},1,0,\dots,0)]\\ 
	&=[(t_{\lambda^0} u_0, t_{\lambda^1} u_1, \dots,t_{\lambda^{i-1}} u_{i-1}, {t_{\lambda^i}}, 0,\dots,0)]\\ 
	&=[((t_{\lambda^i})^{-\dfrac{b_0}{b_i}} t_{\lambda^0}u_{0},(t_{\lambda^i})^{-\dfrac{b_1}{b_i}}t_{\lambda^1}u_{1},\dots,(t_{\lambda^i})^{-\dfrac{b_{i-1}}{b_{i}}} t_{\lambda^{i-1}}u_{i-1},1,0,\dots,0)].
\end{align*}

Then $E(i+1)$ is $T^n$-invariant as well as $T^{m+1}$-invariant.
   Let $$X_i:=[(u_0,u_1,\dots,{u_i},0,\dots,0)]\in \WW P(b_0,b_1 \ldots, b_m)\}.$$ Then $X_i$ gives a filtration 
   \begin{equation}\label{eq_fil_wgt_pr}
   \{pt\} = X_0 \subset X_1 \subset \cdots \subset X_{m}=\WW P(b_0,b_1,\dots,b_m).
   \end{equation}
    Note that the filtration in \eqref{eq_fil_wgt_pr}
   satisfies Proposition \ref{thm:cw_srtucture} and Remark \ref{prop_div_cond}. 
   Thus in this case $$\xi^i\colon E(i+1)\to [e_{i+1}]\cong \bigoplus_{j=0}^i(\xi^{ij}:\CC_{ij}\to [e_{i+1}])$$ for some irreducible representation $\CC_{ij}$. Then one can get the following result using the proof of \cite[Theorem 2.3]{HHRW}.

\begin{theorem}\label{thm_eq_coh_pr_sp}
If $\WW P(b_0, \ldots, b_m)$ is divisive, then the generalized $T^{n}$-equivariant cohomology $\mathcal{E}^\ast_{T^{n}}(\WW P(b_0, \ldots, b_m); \ZZ)$ 
	 can be given by
	\begin{equation*}
 \Big \{(f_i) \in \bigoplus_{i=0}^{m} \mathcal{E}^*_{T^{n}}(pt; \ZZ) ~ \big{|} ~ e_{T^{n}}(\xi^{ij})~
		\mbox{divides} ~ f_i - f_j ~\mbox{for all }~ j<i \Big\}
	\end{equation*}
for $\mathcal{E}^\ast_{T^{n}}= H^\ast_{T^{n}}, K^*_{T^{n}}$ and $MU^*_{T^{n}}$.
\end{theorem}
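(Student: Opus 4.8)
The plan is to reduce the statement to the GKM-type description of \cite[Theorem 2.3]{HHH}, specialized as in the proof of \cite[Theorem 2.3]{HHRW}, exactly parallel to the proof of Theorem \ref{prop_GKM-description_Z}; the only genuinely new point is the bookkeeping of which line bundles occur in each $\xi^i$. First I would record the geometric input already assembled before the statement: the filtration \eqref{eq_fil_wgt_pr} is $T^n$-invariant, each quotient $X_i/X_{i-1}$ is the Thom space of the $T^n$-vector bundle $\xi^i\colon E(i+1)\to[e_{i+1}]$ over the fixed point $[e_{i+1}]$, and since $\WW P(b_0,\dots,b_m)$ is divisive this filtration meets the hypotheses of Proposition \ref{thm:cw_srtucture} and Remark \ref{prop_div_cond}. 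The explicit $T^n$-action computed just above the statement shows that $E(i+1)\cong\CC^i$ splits $T^n$-equivariantly as $\bigoplus_{j<i}\CC_{ij}$, where $\CC_{ij}$ is the one-dimensional representation with character $(t_1,\dots,t_n)\mapsto t_{\lambda^j}(t_{\lambda^i})^{-b_j/b_i}$; these summands are the line bundles $\xi^{ij}$. In contrast to the weighted Grassmann situation of Theorem \ref{prop_GKM-description_Z}, where $\xi^i$ splits only along reversals so that only the edges with $|\lambda^j\cap\lambda^i|=d-1$ appear, here every index $j<i$ contributes a summand, so the complete set $\{j : j<i\}$ indexes the summands of $\xi^i$.

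Next I would verify the two algebraic conditions needed to invoke the cited theorems: that each Euler class $e_{T^n}(\xi^{ij})$ is a non-zero-divisor in $\mathcal{E}^*_{T^n}(pt;\ZZ)$, and that for fixed $i$ the classes $\{e_{T^n}(\xi^{ij})\}_{j<i}$ are pairwise prime. The divisibility chain $b_m\mid\cdots\mid b_0$ forces $b_j/b_i\in\ZZ_{\geq 1}$ for $j<i$, so the characters above, and hence their Euler classes, are honest elements of $\mathcal{E}^*_{T^n}(pt;\ZZ)$; the non-zero-divisor and pairwise-primality properties then follow from \cite[Lemma 5.2]{HHH} together with these explicit characters, precisely as in Remark \ref{rem_eq_strati}. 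With these in hand, the mechanism of \cite[Theorem 2.3]{HHH}, in the form used in the proof of \cite[Theorem 2.3]{HHRW}, identifies $\mathcal{E}^*_{T^n}(\WW P(b_0,\dots,b_m);\ZZ)$ for all three theories $H^*_{T^n}$, $K^*_{T^n}$ and $MU^*_{T^n}$ simultaneously with the subring of $\bigoplus_i\mathcal{E}^*_{T^n}(pt;\ZZ)$ cut out by demanding that $e_{T^n}(\xi^{ij})$ divide $f_i-f_j$ for each summand $\xi^{ij}$ of $\xi^i$, which by the previous paragraph is exactly the asserted condition for all $j<i$.

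The main obstacle is not the invocation of the cited theorems but checking their hypotheses uniformly across the three theories. The delicate point is pairwise primality: one must confirm that the $i$ characters $t_{\lambda^j}(t_{\lambda^i})^{-b_j/b_i}$, $0\leq j<i$, yield pairwise coprime Euler classes after passing from the polynomial ring $H^*_{T^n}(pt)$ to the representation-ring-based $K^*_{T^n}(pt)$ and to the less explicit $MU^*_{T^n}(pt)$, where primality must be read in the appropriate ring. This is handled exactly as in \cite[Lemma 5.2]{HHH} and the divisive reduction underlying Theorem \ref{thm:tinv_cell}, so no new estimate is required; it is, however, the step where the divisiveness of $\WW P(b_0,\dots,b_m)$ is genuinely used, since it is what makes each $\xi^{ij}$ an honest line bundle rather than an orbifold line bundle.
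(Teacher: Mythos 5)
Your proposal is correct and follows essentially the same route as the paper: the paper's argument consists precisely of the filtration \eqref{eq_fil_wgt_pr}, the observation that it satisfies Proposition \ref{thm:cw_srtucture} and Remark \ref{prop_div_cond}, the splitting of $\xi^i$ into the line bundles $\xi^{ij}$ with characters $(t_1,\dots,t_n)\mapsto t_{\lambda^j}(t_{\lambda^i})^{-b_j/b_i}$, and an appeal to the proof of \cite[Theorem~2.3]{HHRW}. Your only additions are explicit checks (non-zero-divisor and pairwise-primality of the Euler classes, and the remark that every $j<i$ indexes a summand rather than only the ``adjacent'' pairs of the Grassmann case) that the paper leaves implicit, and your indexing $j<i$ for the $i$ summands of $E(i+1)\cong\CC^i$ is in fact the correct reading of the paper's $\bigoplus_{j=0}^{i}$.
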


We note that there are several pairs $(n,d)$ such that $d<n$ and ${n\choose d}=m+1>2$. Now we discuss how to calculate the equivariant Euler class $e_{T^{n}}(\xi^{ij})$ in Theorem \ref{thm_eq_coh_pr_sp}. The corresponding one dimensional representation on the bundle $\xi^{ij}$ for $j<i$ is determined by the character 
\begin{equation*}
	(t_1, \ldots, t_n) \to (t_{\lambda^i})^{-\dfrac{b_j}{b_i}}t_{\lambda^j}.
\end{equation*}
Thus, similar to, \eqref{eq_eu_cl} one can calculate the equivariant Euler class $e_{T^n}(\xi^{ij})$ of the bundle $\xi^{ij}$ for $j<i$.

\begin{example}
	For $m=2$, we have ${3\choose 1}={3 \choose 2}=3$. Thus corresponding to two different pairs $(3,1)$ and $(3,2)$ we have two different $T^3$ action on $\WW P(b_0,b_1,b_2)$. The map $f\colon T^3 \to T^3$ defined by $(t_1,t_2,t_3)\to (t_1t_2, t_1t_3, t_2t_3)$ is not an automorphism. So these actions are not equivalent. However, using Theorem \ref{thm_eq_coh_pr_sp}, one can calculate the equivariant cohomology of $\WW P(b_0,b_1,b_2)$ for both the actions if $b_i$ divides $b_{i-1}$ for $i=1,2$. \qed 
\end{example}

\section{Equivariant Schubert calculus for divisive weighted Grassmann orbifolds}\label{sec:schub_calc}
In this section, we show that there exist equivariant Schubert classes which form a basis for the equivariant cohomology ring of a divisive weighted Grassmann orbifold with integer coefficients. Moreover, we compute the weighted structure constants corresponding to this equivariant Schubert basis with integer coefficients.

For $\text{x}\in H_{T^n}^{*}(\WG (d,n);\mathbb{Z})$, the support of $\text{x}$ denoted by `$\rm{supp}(x)$' is the set of all Schubert symbols $\lambda^i$ such that $x|_{\lambda^i} \neq 0$. Recall the partial order `$\preceq$' on the Schubert symbols defined in \eqref{eq_bru_ord}. We follow this partial order `$\preceq$' and we call an element $x \in H_{T^n}^{*}(\WG (d,n);\mathbb{Z})$ is supported above by $\lambda^i$ if $\lambda^i \preceq \lambda^{k}$ for all $\lambda^{k}\in \rm{supp}(x)$.

 Let $\WG(d,n)$ be a divisive weighted Grassmann orbifold. Then there exists $\sigma\in S_n$ such that 
 \begin{equation}\label{div_cond}
  \sigma c_i
  	 ~\text{divides } \sigma c_{i-1}~\text{for}~i=1,2,\dots,m.
\end{equation}

Using Theorem \ref{lem_euiv_sn}, it is sufficient to consider $\sigma=\rm{Id}$ (the identity permutation on $S_n$). For $\sigma=\rm{Id}$, \eqref{div_cond} transforms to $$c_i~\text{divides } c_{i-1} ~\text{for}~i=1,2,\dots,m.$$

 Recall the definition of $R(\lambda^i)$ from \eqref{eq_rev_lbd}. We introduce the following definition. 
   
\begin{definition}\label{def_schub_class}
	An element $x \in H_{T^n}^{*}({\rm WGr} (d,n);\mathbb{Z})$ is said to be  an equivariant Schubert class corresponding to a Schubert symbol $\lambda^i$ if the following conditions are satisfied.
	\begin{enumerate}
		\item  $x|_{\lambda^k}\neq 0  \implies \lambda^i \preceq \lambda^k$ (say that $x$ is supported above $\lambda^i$).
		\item  $x|_{\lambda^i}=\prod_{\lambda^j\in R(\lambda^i)}(Y_{\lambda^j}-\dfrac{c_j}{c_i}Y_{\lambda^i})$.
		\item $x|_{\lambda^k}$ is a homogeneous polynomial of $y_1,y_2,\dots,y_n$ of degree $\ell({\lambda^i})$. 
	\end{enumerate}
\end{definition}

\begin{proposition}[Uniqueness]
	For each Schubert symbol $\lambda^i$, there is at most one equivariant Schubert class $x$ corresponding to $\lambda^i$.
\end{proposition}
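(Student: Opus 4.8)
The plan is to prove uniqueness by a \emph{minimal support} argument applied to the difference of two candidate classes. Suppose $x$ and $x'$ are both equivariant Schubert classes corresponding to the same $\lambda^i$, and set $y := x - x'$. From condition (2) I get $y|_{\lambda^i} = x|_{\lambda^i} - x'|_{\lambda^i} = 0$; from condition (1), since $\operatorname{supp}(y) \subseteq \operatorname{supp}(x) \cup \operatorname{supp}(x')$, the element $y$ is again supported above $\lambda^i$; and from condition (3) each restriction $y|_{\lambda^k}$ is a homogeneous polynomial in $y_1, \dots, y_n$ of degree $\ell(\lambda^i)$ (a difference of two such, hence homogeneous of the same degree or zero). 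Thus it suffices to show $y = 0$, i.e. $y|_{\lambda^k} = 0$ for every $k$.

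I would argue by contradiction: assume $y \neq 0$ and pick $\lambda^k \in \operatorname{supp}(y)$ that is minimal with respect to the partial order $\preceq$ of \eqref{eq_bru_ord}. Since $y$ is supported above $\lambda^i$ we have $\lambda^i \preceq \lambda^k$, and since $y|_{\lambda^i}=0$ we have $\lambda^k \neq \lambda^i$; hence $\lambda^i \prec \lambda^k$ strictly, which forces $\ell(\lambda^i) < \ell(\lambda^k)$ because $\lambda^i_s \le \lambda^k_s$ for all $s$ with strict inequality somewhere. The next step is to bring in the reversals of $\lambda^k$: for each $\lambda^j \in R(\lambda^k)$ one has $\lambda^j \prec \lambda^k$ and $|\lambda^j \cap \lambda^k| = d-1$. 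Because the total order refines $\preceq$, the relation $\lambda^j \prec \lambda^k$ gives $j < k$; and by minimality of $\lambda^k$ the symbol $\lambda^j$ is not in $\operatorname{supp}(y)$, so $y|_{\lambda^j}=0$. The divisibility description of $H^*_{T^n}(\WG(d,n);\ZZ)$ from Theorem \ref{prop_GKM-description_Z}, together with the Euler class formula \eqref{eq_eu_cl}, then yields that $Y_{\lambda^j} - \tfrac{c_j}{c_k} Y_{\lambda^k} = e_{T^n}(\xi^{kj})$ divides $y|_{\lambda^k} - y|_{\lambda^j} = y|_{\lambda^k}$ (note $c_k \mid c_j$ by divisiveness, so the coefficient is an integer).

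Collecting these, every factor $Y_{\lambda^j} - \tfrac{c_j}{c_k} Y_{\lambda^k}$, ranging over the $\ell(\lambda^k)$ reversals $\lambda^j \in R(\lambda^k)$, divides $y|_{\lambda^k}$. Since these Euler classes are pairwise coprime by Remark \ref{rem_eq_strati}(2) and $\ZZ[y_1,\dots,y_n]$ is a UFD, their product, of degree $|R(\lambda^k)| = \ell(\lambda^k)$, must divide $y|_{\lambda^k}$. But $y|_{\lambda^k}$ is homogeneous of degree $\ell(\lambda^i) < \ell(\lambda^k)$, so the only possibility is $y|_{\lambda^k}=0$, contradicting $\lambda^k \in \operatorname{supp}(y)$. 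Hence $y=0$ and $x = x'$. The step I expect to demand the most care is precisely the passage from ``each Euler-class factor divides $y|_{\lambda^k}$'' to ``their product divides $y|_{\lambda^k}$'': this is where the pairwise-prime property of Remark \ref{rem_eq_strati}(2) and the unique factorization of $\ZZ[y_1,\dots,y_n]$ are genuinely needed, and where I would verify that the reversals of $\lambda^k$ account for exactly $\ell(\lambda^k)$ distinct, non-associate linear forms.
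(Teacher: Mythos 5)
Your argument is correct and is essentially the paper's own proof: take the difference of two candidate classes, pick a $\preceq$-minimal element of its support, use the GKM divisibility relations over the reversals of that symbol to force the full product $\prod_{\lambda^j\in R(\lambda^k)}\bigl(Y_{\lambda^j}-\tfrac{c_j}{c_k}Y_{\lambda^k}\bigr)$ of degree $\ell(\lambda^k)$ to divide a homogeneous class of strictly smaller degree $\ell(\lambda^i)$, and conclude it vanishes. The only difference is that you spell out the passage from ``each Euler class divides'' to ``the product divides'' via pairwise coprimality and unique factorization, which the paper asserts without comment; this is a welcome amount of extra care, not a deviation.
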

\begin{proof}
Suppose that there were two distinct  equivariant Schubert classes $x, x^{'}$ corresponding to $\lambda^i$. Let $\lambda^j$ be the minimal Schubert symbol such that ${(x-x^{'})|_{\lambda^j} \neq 0}$. By Definition \ref{def_schub_class} (1) and (2), we get $\lambda^i \prec \lambda^j$. Then from the condition in the expression of the equivariant cohomology ring in Theorem \ref{prop_GKM-description_Z}, we get that $(x-x^{'})|_{\lambda^j}$ is a multiple of $\prod_{\lambda^k\in R(\lambda^j)}(Y_{\lambda^k}-\dfrac{c_k}{c_j}Y_{\lambda^j})$ which is of degree $\ell(\lambda^j)$. This contradicts the fact that $x-x^{'}$ is homogeneous of degree $\ell(\lambda^i)<\ell(\lambda^j)$. 
\end{proof}

Let us denote the  equivariant Schubert class corresponding to the Schubert symbol $\lambda^i$ by $w\widetilde{S}_{\lambda^i}$ for $i=0,1,\dots,m$. We remark that the existence of $w\widetilde{S}_{\lambda^i}$ follows from \cite[Proposition 4.3]{HHH} and Theorem \ref{prop_GKM-description_Z}. Using the arguments in the proof of \cite[Proposition 1]{KnTa}, one gets the following. 

\begin{proposition}\label{prop_schub_bas}
The equivariant Schubert classes $\{w\widetilde{S}_{\lambda^i}\}_{i=0}^{m}$  is a basis for the module $H^*_{T^n}({\rm WGr} (d,n);\ZZ)$ over $H_{T^n}^{*}(pt; \ZZ)$. Moreover, any $x\in H_{T^n}^{*}({\rm WGr} (d,n);\mathbb{Z})$ can be written uniquely as an $H_{T^n}^{*}(pt; \ZZ)$ linear combination of $w\widetilde{S}_{\lambda^i}$ using only those $\lambda^i$ such that $\lambda^j \preceq \lambda^i$ for some $\lambda^j \in \mbox{supp}(x)$.	 
\end{proposition}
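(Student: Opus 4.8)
The plan is to establish linear independence and spanning separately, exploiting the upper-triangular shape of the restriction matrix $\big(w\widetilde{S}_{\lambda^i}|_{\lambda^k}\big)$ together with the presentation of Theorem \ref{prop_GKM-description_Z}. The crucial structural inputs are: (i) the restriction map $x \mapsto (x|_{\lambda^i})_{i=0}^m$ is injective, since Theorem \ref{prop_GKM-description_Z} identifies $H^*_{T^n}(\WG(d,n);\ZZ)$ with a submodule of $\bigoplus_i H^*_{T^n}(pt;\ZZ)$; and (ii) with respect to the total order $<$ refining $\preceq$, condition (1) of Definition \ref{def_schub_class} gives $w\widetilde{S}_{\lambda^i}|_{\lambda^k}=0$ whenever $k<i$, while the diagonal entry $w\widetilde{S}_{\lambda^i}|_{\lambda^i}=\prod_{\lambda^j\in R(\lambda^i)}\big(Y_{\lambda^j}-\tfrac{c_j}{c_i}Y_{\lambda^i}\big)$ is a nonzero homogeneous element of the integral domain $H^*_{T^n}(pt;\ZZ)=\ZZ[y_1,\dots,y_n]$ (note $c_i\mid c_j$ for $j<i$ by divisiveness, so each factor is a genuine polynomial).

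For linear independence I would suppose $\sum_i P_i\,w\widetilde{S}_{\lambda^i}=0$ with coefficients $P_i\in H^*_{T^n}(pt;\ZZ)$ not all zero, and take $\lambda^i$ to be $<$-minimal with $P_i\neq 0$. Restricting to $\lambda^i$ annihilates every term with $k>i$ (by the triangularity in (ii)) and every term with $k<i$ (by minimality), leaving $P_i\,w\widetilde{S}_{\lambda^i}|_{\lambda^i}=0$; since the diagonal entry is a nonzero element of an integral domain, $P_i=0$, a contradiction. The same computation yields uniqueness of any expansion.

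For spanning I would induct on the $<$-minimal index of the support, which strictly increases at each step. Given $x$, let $\lambda^i$ be $<$-minimal in $\mathrm{supp}(x)$. The set $\{\lambda^j: j<i,\ |\lambda^j\cap\lambda^i|=d-1\}$ equals $R(\lambda^i)$, because a single-box neighbour of $\lambda^i$ is always $\preceq$-comparable to $\lambda^i$ and lies below it precisely when the replaced entry is decreased, i.e.\ when it is a reversal. For each such $\lambda^j$ we have $x|_{\lambda^j}=0$ by minimality, so the divisibility conditions of Theorem \ref{prop_GKM-description_Z} force $e_{T^n}(\xi^{ij})=Y_{\lambda^j}-\tfrac{c_j}{c_i}Y_{\lambda^i}$ to divide $x|_{\lambda^i}$; as these Euler classes are pairwise coprime by Remark \ref{rem_eq_strati}(2), unique factorization upgrades ``each factor divides'' to ``the product divides,'' giving $x|_{\lambda^i}=P_i\,w\widetilde{S}_{\lambda^i}|_{\lambda^i}$ for some $P_i$. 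Replacing $x$ by $x-P_i\,w\widetilde{S}_{\lambda^i}$ kills the value at $\lambda^i$ and, since $w\widetilde{S}_{\lambda^i}$ is supported above $\lambda^i$, leaves all values at $\lambda^k$ with $k<i$ unchanged; hence the minimal index of the support strictly increases. After finitely many steps the remainder restricts to $0$ everywhere and so is $0$ by injectivity, whence $x=\sum_i P_i\,w\widetilde{S}_{\lambda^i}$. To obtain the final assertion I would maintain the invariant that every index $\lambda^i$ introduced satisfies $\lambda^j\preceq\lambda^i$ for some $\lambda^j\in\mathrm{supp}(x)$: this holds at the first step, and subtracting a class supported above $\lambda^i$ can only enlarge the support by symbols $\succeq\lambda^i$, so the invariant propagates through the recursion.

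The main obstacle is the spanning step, specifically the passage from the pointwise divisibility of $x|_{\lambda^i}$ by the individual Euler classes to divisibility by their full product $\prod_{\lambda^j\in R(\lambda^i)}(Y_{\lambda^j}-\tfrac{c_j}{c_i}Y_{\lambda^i})$. This rests on two facts that must be nailed down: the identification of the downward single-box neighbours of $\lambda^i$ with $R(\lambda^i)$, and the pairwise coprimality of the corresponding equivariant Euler classes in $\ZZ[y_1,\dots,y_n]$. Everything else is bookkeeping with the total order and the support invariant.
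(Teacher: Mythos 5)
Your argument is correct and is essentially the paper's: the paper simply defers to the proof of Proposition 1 of Knutson--Tao, which is exactly this upper-triangularity-plus-downward-induction-on-support argument, using injectivity of restriction to the fixed points, the vanishing $w\widetilde{S}_{\lambda^i}|_{\lambda^k}=0$ for $k<i$, and the GKM divisibility conditions together with pairwise coprimality of the Euler classes to produce each coefficient. The two points you flag as needing care (identifying the downward single-box neighbours with $R(\lambda^i)$, and upgrading individual divisibility to divisibility by the product in the UFD $\ZZ[y_1,\dots,y_n]$) are handled correctly.
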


\begin{example}
In Figure \ref{Fig_weig_sch_cls}, we compute the equivariant Schubert class $w\widetilde{S}_{(2,3)}\in H_{T^4}^{*}(\WG (2,4);\ZZ)$ where $\WG (2,4)$ is a divisive weighted Grassmann orbifold for some $W=(\alpha+\gamma\beta,\alpha,\alpha,\alpha)\in (\ZZ_{\geq 0})^4$ and $ a=\beta-2\alpha\in \ZZ_{>0}$. Figure \ref{Fig_weig_sch_cls}(a) is the lattice of the Schubert symbols for $2<4$. Figure \ref{Fig_weig_sch_cls}(b) gives the equivariant Schubert class corresponding to the Schubert symbol $(2,3)$. 
 \begin{figure}
 	\begin{tikzpicture}[scale=.42]
 		
 		\draw (0,0)--(2,-2);
 		\draw (6,0)--(2,-2);
 		\draw [dashed] (0,0)--(4,2);
 		\draw [dashed] (6,0)--(4,2);
 		\draw[dashed] (3.5,5)--(4,2);
 		\draw (3.5,5)--(0,0);
 		\draw (3.5,5)--(2,-2);
 		\draw (3.5,5)--(6,0);
 		\draw[dashed] (2.5,-5)--(4,2);
 		\draw (2.5,-5)--(0,0);
 		\draw (2.5,-5)--(2,-2);
 		\draw (2.5,-5)--(6,0);
 		\node at (3.2,-2.3) {$(1,3)$};
 		\node at (2.5,-5.8) {$(1,2)$};
 		\node at (3, -7) {$(a)$};
 		\node at (-1,0) {$(1,4)$};
 		\node at (7,0) {$(2,3)$};
 		\node at (3,2.1) {$(2,4)$};
 		\node at (3.5,5.5) {$(3,4)$};

 		\begin{scope}[xshift=300]
 				\draw (0,0)--(2,-2);
 			\draw (6,0)--(2,-2);
 			\draw [dashed] (0,0)--(4,2);
 			\draw [dashed] (6,0)--(4,2);
 			\draw[dashed] (3.5,5)--(4,2);
 			\draw (3.5,5)--(0,0);
 			\draw (3.5,5)--(2,-2);
 			\draw (3.5,5)--(6,0);
 			\draw[dashed] (2.5,-5)--(4,2);
 			\draw (2.5,-5)--(0,0);
 			\draw (2.5,-5)--(2,-2);
 			\draw (2.5,-5)--(6,0);
 			\node at (2.5,-2.2) {$0$};
 			\node at (2,-5.3) {$0$};
 			\node at (3,-6.5) {$(b)$};
 			\node at (-0.5,0) {$0$};
 			\node at (12.2,-.1) {$(Y_{(1,3)}-(\gamma+1)Y_{(2,3)})(Y_{(1,2)}-(\gamma+1)Y_{(2,3)})$};
 		\node at (11, 2.5) {$(Y_{(1,4)}-(\gamma+1)Y_{(2,4)})(Y_{(1,2)}-(\gamma+1)Y_{(2,4)})$};
 		\node at (5,5.8) {$(Y_{(1,4)}-(\gamma+1)Y_{(3,4)})(Y_{(1,3)}-(\gamma+1)Y_{(3,4)})$};	
 		\end{scope}
  	\end{tikzpicture}
 	\caption{}
 	\label{Fig_weig_sch_cls}
 \end{figure} \qed
\end{example}

In the rest of this section, we compute the weighted structure constants for the equivariant cohomology of a divisive weighted Grassmann orbifold.
Since the set $\{w\widetilde{S}_{\lambda^i}\}_{i=0}^{m}$ form a $H_{T^n}^{*}(\{pt\};\ZZ)$-basis for $H_{T^n}^{*}(\WG (d,n);\mathbb{Z})$, for any two $\lambda^i$ and $\lambda^j$,  one has the following
\begin{equation}\label{eq_wei_st_con}
w\widetilde{S}_{\lambda^i}~w\widetilde{S}_{\lambda^j}=\sum_{\lambda^k}wc_{ij}^{k}~w\widetilde{S}_{\lambda^k}
\end{equation}
where $\lambda^k\in \{\lambda^0,\lambda^1,\dots,\lambda^m\}$. The constant $wc_{ij}^{k}\in H_{T^n}^{*}(pt; \ZZ) $ in the formula is called `weighted structure constant'.

\begin{lemma}\label{lem_st_con}
   The weighted structure constant $wc_{ij}^{k}$ have the following properties.
	\begin{enumerate}
		\item The weighted structure constant $wc_{ij}^{k}$ has degree $\ell(\lambda^i)+\ell(\lambda^j)-\ell(\lambda^k)$. 
		\item $wc_{ij}^{k}=0$ unless $\ell(\lambda^k)\leq \ell(\lambda^i)+\ell(\lambda^j)$ and $\lambda^k\succeq \lambda^i, \lambda^j$.   
		\item When $i=k$ we have $wc_{ij}^{i}={w\widetilde{S}_{\lambda^j}}|_{\lambda^i}$.
	\end{enumerate}
\end{lemma}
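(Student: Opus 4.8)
The plan is to prove each of the three properties by combining the degree constraints from Lemma~\ref{lem_st_con}'s setup with the localization description of $H^*_{T^n}(\WG(d,n);\ZZ)$ in Theorem~\ref{prop_GKM-description_Z} and the defining conditions of the equivariant Schubert classes in Definition~\ref{def_schub_class}. Throughout I would work with the restriction maps $x \mapsto x|_{\lambda^k}$, which are ring homomorphisms into $H^*_{T^n}(pt;\ZZ) = \ZZ[y_1,\dots,y_n]$, since these restrictions see the product structure multiplicatively and the Schubert basis expansion additively.

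\textbf{Property (1).} First I would observe that each $w\widetilde{S}_{\lambda^i}$ is homogeneous of degree $\ell(\lambda^i)$; this is forced by Definition~\ref{def_schub_class}(2)--(3), since $w\widetilde{S}_{\lambda^i}|_{\lambda^i}$ is a product of $\ell(\lambda^i)$ linear forms and all other nonzero restrictions are homogeneous of the same degree. Hence the product $w\widetilde{S}_{\lambda^i}\,w\widetilde{S}_{\lambda^j}$ is homogeneous of degree $\ell(\lambda^i)+\ell(\lambda^j)$. In the expansion \eqref{eq_wei_st_con}, the term $wc_{ij}^k\,w\widetilde{S}_{\lambda^k}$ must therefore be homogeneous of total degree $\ell(\lambda^i)+\ell(\lambda^j)$, and since $w\widetilde{S}_{\lambda^k}$ contributes degree $\ell(\lambda^k)$, the coefficient $wc_{ij}^k$ must have degree $\ell(\lambda^i)+\ell(\lambda^j)-\ell(\lambda^k)$.

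\textbf{Property (2).} The degree count from (1) immediately gives $wc_{ij}^k = 0$ when $\ell(\lambda^k) > \ell(\lambda^i)+\ell(\lambda^j)$, because a graded module element cannot have negative degree. For the support condition $\lambda^k \succeq \lambda^i,\lambda^j$, I would restrict \eqref{eq_wei_st_con} to a fixed $\lambda^s$ and use the triangularity furnished by Proposition~\ref{prop_schub_bas}: the class $w\widetilde{S}_{\lambda^i}$ is supported above $\lambda^i$, so $(w\widetilde{S}_{\lambda^i}\,w\widetilde{S}_{\lambda^j})|_{\lambda^s} = 0$ unless $\lambda^s \succeq \lambda^i$ and $\lambda^s \succeq \lambda^j$. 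The plan is to argue inductively on the partial order $\preceq$: evaluating at the minimal $\lambda^s$ in the support shows any $wc_{ij}^k$ with $\lambda^k \not\succeq \lambda^i$ or $\lambda^k \not\succeq \lambda^j$ must vanish, using that the matrix of restrictions $(w\widetilde{S}_{\lambda^k}|_{\lambda^s})$ is upper-triangular with respect to $\preceq$ with nonzero diagonal entries (the product in Definition~\ref{def_schub_class}(2), which is a nonzero-divisor).

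\textbf{Property (3).} For $i=k$, I would restrict \eqref{eq_wei_st_con} to the vertex $\lambda^i$. Since each $w\widetilde{S}_{\lambda^k}$ with $\lambda^k \succ \lambda^i$ satisfies $w\widetilde{S}_{\lambda^k}|_{\lambda^i}=0$ by the support condition, only the term $k=i$ survives on the right, giving
\begin{equation*}
(w\widetilde{S}_{\lambda^i}\,w\widetilde{S}_{\lambda^j})|_{\lambda^i} = wc_{ij}^i\,(w\widetilde{S}_{\lambda^i}|_{\lambda^i}).
\end{equation*}
On the left, the restriction is multiplicative, so the left side equals $(w\widetilde{S}_{\lambda^i}|_{\lambda^i})(w\widetilde{S}_{\lambda^j}|_{\lambda^i})$. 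Cancelling the nonzero-divisor $w\widetilde{S}_{\lambda^i}|_{\lambda^i}$ yields $wc_{ij}^i = w\widetilde{S}_{\lambda^j}|_{\lambda^i}$. The main obstacle I anticipate is justifying the triangularity and the cancellation rigorously: I must confirm that $w\widetilde{S}_{\lambda^i}|_{\lambda^i}$ is genuinely a non-zero-divisor in $\ZZ[y_1,\dots,y_n]$ (it is a product of nonzero linear forms $Y_{\lambda^j}-\tfrac{c_j}{c_i}Y_{\lambda^i}$, which is an integral-domain argument) and that Proposition~\ref{prop_schub_bas} indeed supplies the upper-triangular structure needed to isolate the diagonal coefficient; the degree bookkeeping in (1) is routine once homogeneity of the basis is established.
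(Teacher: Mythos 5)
Your proposal is correct and follows essentially the same route as the paper's proof: degree bookkeeping from the homogeneity of the Schubert classes for (1), the support condition on $w\widetilde{S}_{\lambda^i}w\widetilde{S}_{\lambda^j}$ combined with the triangularity in Proposition \ref{prop_schub_bas} for (2), and restriction of \eqref{eq_wei_st_con} to $\lambda^i$ followed by cancellation of the nonzero element $w\widetilde{S}_{\lambda^i}|_{\lambda^i}$ in the integral domain $\ZZ[y_1,\dots,y_n]$ for (3). Your added justifications (explicit induction on $\preceq$ and the non-zero-divisor check) merely spell out details the paper leaves implicit.
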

\begin{proof}
	(1) The degree of $w\widetilde{S}_{\lambda^i}$ is $\ell(\lambda^i)$. So the degree of the weighted structure constant $wc_{ij}^{k}$ is given by 
	\begin{align*}
	\text{deg}(wc_{ij}^{k})&=\text{deg}(w\widetilde{S}_{\lambda^i})+\text{deg}(w\widetilde{S}_{\lambda^j})-\text{deg}(w\widetilde{S}_{\lambda^k})\\&=\ell(\lambda^i)+\ell(\lambda^j)-\ell(\lambda^k).
	\end{align*}
	  
	(2) The weighted structure constant $wc_{ij}^{k}=0$ if $\ell(\lambda^i)+\ell(\lambda^j)-\ell(\lambda^k)<0$. Also 
	 $${(w\widetilde{S}_{\lambda^i}~w\widetilde{S}_{\lambda^j})}|_{\lambda^{s}}\neq 0\implies \lambda^{s} \succeq \lambda^i, \lambda^j .$$
	 Thus by Proposition \ref{prop_schub_bas}, $wc_{ij}^{k}\neq 0\implies \lambda^k \succeq \lambda^i, \lambda^j$.
	
	(3) Compare the $\lambda^i$-th component of the both side in  \eqref{eq_wei_st_con} we get $${w\widetilde{S}_{\lambda^i}}|_{\lambda^i}~{w\widetilde{S}_{\lambda^j}}|_{\lambda^i}=wc_{ij}^{i}~{w\widetilde{S}_{\lambda^i}}|_{\lambda^i}+\sum_{k\neq i}wc_{ij}^{k}~{w\widetilde{S}_{\lambda^k}}|_{\lambda^i}.$$
	Since,  $wc_{ij}^{k}=0$ unless $\lambda^k\succeq \lambda^i$. But ${w\widetilde{S}_{\lambda^k}}|_{\lambda^i}=0$ for $\lambda^k\succeq \lambda^i$, and $\lambda^k\neq \lambda^i$. Thus all the terms in the summation vanish. So the claim follows, since ${w\widetilde{S}_{\lambda^i}}|_{\lambda^i}\neq 0$.  
\end{proof}

Now we introduce equivariant Schubert divisor class. Note that $\ell(\lambda^i)=0$ if and only if $i=0$ and $\ell(\lambda^i)=1$ if and only if $i=1$.

\begin{lemma}\label{lem_sch_div_cls}
The equivariant Schubert divisor class $w\widetilde{S}_{\lambda^1}\in H_{T^n}^{*}({\rm WGr} (d,n);\ZZ)$ is given by $${w\widetilde{S}_{\lambda^1}}{|_{\lambda^i}}=Y_{\lambda^0}-\dfrac{c_0}{c_i}Y_{\lambda^i}.$$ 
\end{lemma}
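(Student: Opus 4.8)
The plan is to write down the candidate class $x=(f_i)_{i=0}^m$ with $f_i := Y_{\lambda^0}-\frac{c_0}{c_i}Y_{\lambda^i}$, show that it is a legitimate element of $H^*_{T^n}(\WG(d,n);\ZZ)$ satisfying the three defining conditions of the equivariant Schubert class attached to $\lambda^1$, and then invoke the uniqueness proposition proved above to conclude $x=w\widetilde{S}_{\lambda^1}$, which yields the stated restriction formula. Before anything else I would record the two integrality facts that divisiveness provides: from $c_m\mid c_{m-1}\mid\cdots\mid c_0$ one gets $\frac{c_0}{c_i}\in\ZZ$ for every $i$, so each $f_i$ is a genuine homogeneous degree-one element of $H^*_{T^n}(pt;\ZZ)\cong\ZZ[y_1,\dots,y_n]$, and for $j<i$ one gets $\frac{c_j}{c_i}\in\ZZ$, so the Euler class $e_{T^n}(\xi^{ij})=Y_{\lambda^j}-\frac{c_j}{c_i}Y_{\lambda^i}$ of \eqref{eq_eu_cl} is itself integral.

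Next I would verify membership in $H^*_{T^n}(\WG(d,n);\ZZ)$ through the GKM description of Theorem \ref{prop_GKM-description_Z}. The only computation needed is, for each pair $j<i$ with $|\lambda^j\cap\lambda^i|=d-1$,
\[
f_i-f_j=\frac{c_0}{c_j}Y_{\lambda^j}-\frac{c_0}{c_i}Y_{\lambda^i}=\frac{c_0}{c_j}\Big(Y_{\lambda^j}-\frac{c_j}{c_i}Y_{\lambda^i}\Big)=\frac{c_0}{c_j}\,e_{T^n}(\xi^{ij}),
\]
using $\frac{c_0}{c_j}\cdot\frac{c_j}{c_i}=\frac{c_0}{c_i}$. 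Since $\frac{c_0}{c_j}\in\ZZ$, the Euler class divides $f_i-f_j$, so $x$ satisfies the divisibility condition and is a legitimate class.

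Finally I would check the conditions of Definition \ref{def_schub_class} for $\lambda^1$. Because $\ell(\lambda^1)=1$, the unique length-one symbol is $\lambda^1=(1,2,\dots,d-1,d+1)$, so $\lambda^1\not\preceq\lambda^i$ holds only for $i=0$; as $f_0=Y_{\lambda^0}-\frac{c_0}{c_0}Y_{\lambda^0}=0$, condition (1) (support above $\lambda^1$) holds, and condition (3) (homogeneity of degree $\ell(\lambda^1)=1$) is immediate. For condition (2) I would compute $\mathrm{rev}(\lambda^1)=\{(d+1,d)\}$, whence $R(\lambda^1)=\{\lambda^0\}$ and $\prod_{\lambda^j\in R(\lambda^1)}\big(Y_{\lambda^j}-\frac{c_j}{c_1}Y_{\lambda^1}\big)=Y_{\lambda^0}-\frac{c_0}{c_1}Y_{\lambda^1}=f_1$. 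There is no serious obstacle here; the only genuinely load-bearing points are the integrality of $\frac{c_0}{c_j}$ — which is precisely where the divisive hypothesis enters — and the identification $R(\lambda^1)=\{\lambda^0\}$ that makes the prescribed value $f_1$ agree with condition (2).
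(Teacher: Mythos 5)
Your proposal is correct and follows essentially the same route as the paper: the paper's (very terse) proof likewise reads the value at $\lambda^1$ off condition (2) of Definition \ref{def_schub_class} (via $R(\lambda^1)=\{\lambda^0\}$) and asserts that the remaining components are forced by the definition, which is exactly what your explicit verification of the GKM divisibility $f_i-f_j=\frac{c_0}{c_j}\bigl(Y_{\lambda^j}-\frac{c_j}{c_i}Y_{\lambda^i}\bigr)$ together with the uniqueness proposition makes precise. Your write-up simply supplies the details (integrality from divisiveness, the support check $\lambda^1\preceq\lambda^i$ for $i\geq 1$, and the appeal to uniqueness) that the paper leaves implicit.
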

\begin{proof}
Note that ${w\widetilde{S}_{\lambda^1}}|_{\lambda^1} =Y_{\lambda^0}-\dfrac{c_0}{c_1}Y_{\lambda^1}$. For other Schubert symbol $\lambda^i$, it follows from Definition \ref{def_schub_class}. 
\end{proof}

Let  $\lambda^i$ and $\lambda^j$ be two Schubert symbols such that $\lambda^j\leq \lambda^i$. Then Lemma \ref{lem_sch_div_cls} gives
 \begin{equation*}
	{w\widetilde{S}_{\lambda^1}}{|_{\lambda^i}}-{w\widetilde{S}_{\lambda^1}}{|_{\lambda^j}}=\dfrac{c_0}{c_j}(Y_{\lambda^j}-\dfrac{c_j}{c_i}Y_{\lambda^i}).
\end{equation*}

For any two Schubert symbol $\lambda^i$ and $\lambda^j$ we denote $\lambda^i\to\lambda^j$ if $\ell(\lambda^i)=\ell(\lambda^j)+1$ and $\lambda^j\preceq \lambda^i$.

\begin{proposition}[Weighted Pieri rule]\label{prop_pieri_rull}
	$$w\widetilde{S}_{\lambda^1}~w\widetilde{S}_{\lambda^j}=({w\widetilde{S}_{\lambda^1}}|_{\lambda^j})~w\widetilde{S}_{\lambda^j}+\sum_{\lambda^i\to\lambda^j}\dfrac{c_0}{c_j}~w\widetilde{S}_{\lambda^i} .$$
\end{proposition}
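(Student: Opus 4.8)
The plan is to expand the product $w\widetilde{S}_{\lambda^1}\,w\widetilde{S}_{\lambda^j}$ in the Schubert basis via \eqref{eq_wei_st_con} and to pin down every coefficient $wc_{1j}^k$. First I would use Lemma \ref{lem_st_con} to cut down the sum: part (1) forces $\deg wc_{1j}^k=\ell(\lambda^1)+\ell(\lambda^j)-\ell(\lambda^k)=1+\ell(\lambda^j)-\ell(\lambda^k)$, and part (2) forces $\lambda^k\succeq\lambda^j$. Hence $wc_{1j}^k=0$ unless $\ell(\lambda^k)\in\{\ell(\lambda^j),\ell(\lambda^j)+1\}$, which together with $\lambda^k\succeq\lambda^j$ leaves only $\lambda^k=\lambda^j$ and those $\lambda^k$ with $\lambda^k\to\lambda^j$. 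For the diagonal term, commutativity of the ring gives $wc_{1j}^j=wc_{j1}^j$, and Lemma \ref{lem_st_con}(3) identifies this with $w\widetilde{S}_{\lambda^1}|_{\lambda^j}$, the first term of the asserted formula. It then remains to show $wc_{1j}^i=\frac{c_0}{c_j}$ for every cover $\lambda^i\to\lambda^j$; here $wc_{1j}^i$ has degree $0$, hence is an integer, and $\frac{c_0}{c_j}\in\ZZ$ because divisiveness gives $c_m\mid\cdots\mid c_1\mid c_0$.

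To evaluate $wc_{1j}^i$ I would restrict the reduced expansion to the fixed point $\lambda^i$. By the support condition Definition \ref{def_schub_class}(1), $w\widetilde{S}_{\lambda^k}|_{\lambda^i}\neq 0$ forces $\lambda^k\preceq\lambda^i$; among the surviving indices this kills every cover $\lambda^k\to\lambda^j$ with $\lambda^k\neq\lambda^i$ (such $\lambda^k$ has the same length as $\lambda^i$, so $\lambda^k\preceq\lambda^i$ would force $\lambda^k=\lambda^i$). Only $\lambda^k=\lambda^j$ and $\lambda^k=\lambda^i$ remain, so restriction gives
\[
w\widetilde{S}_{\lambda^1}|_{\lambda^i}\;w\widetilde{S}_{\lambda^j}|_{\lambda^i}=\big(w\widetilde{S}_{\lambda^1}|_{\lambda^j}\big)\,w\widetilde{S}_{\lambda^j}|_{\lambda^i}+wc_{1j}^i\,w\widetilde{S}_{\lambda^i}|_{\lambda^i}.
\]
Rearranging and inserting the divisor difference computed after Lemma \ref{lem_sch_div_cls}, namely $w\widetilde{S}_{\lambda^1}|_{\lambda^i}-w\widetilde{S}_{\lambda^1}|_{\lambda^j}=\frac{c_0}{c_j}\big(Y_{\lambda^j}-\frac{c_j}{c_i}Y_{\lambda^i}\big)$, yields
\[
\frac{c_0}{c_j}\Big(Y_{\lambda^j}-\frac{c_j}{c_i}Y_{\lambda^i}\Big)\,w\widetilde{S}_{\lambda^j}|_{\lambda^i}=wc_{1j}^i\,w\widetilde{S}_{\lambda^i}|_{\lambda^i}.
\]
Since a cover $\lambda^i\to\lambda^j$ decreases exactly one entry of $\lambda^i$ by one, it is a single reversal and $\lambda^j\in R(\lambda^i)$, so Definition \ref{def_schub_class}(2) factors $w\widetilde{S}_{\lambda^i}|_{\lambda^i}=\big(Y_{\lambda^j}-\frac{c_j}{c_i}Y_{\lambda^i}\big)\prod_{\lambda^t\in R(\lambda^i)\setminus\{\lambda^j\}}\big(Y_{\lambda^t}-\frac{c_t}{c_i}Y_{\lambda^i}\big)$. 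As $\ZZ[y_1,\dots,y_n]$ is an integral domain I would cancel the common nonzero factor $Y_{\lambda^j}-\frac{c_j}{c_i}Y_{\lambda^i}$, reducing everything to the single restriction identity
\[
w\widetilde{S}_{\lambda^j}|_{\lambda^i}=\prod_{\lambda^t\in R(\lambda^i)\setminus\{\lambda^j\}}\Big(Y_{\lambda^t}-\frac{c_t}{c_i}Y_{\lambda^i}\Big),
\]
after which $wc_{1j}^i=\frac{c_0}{c_j}$ is immediate.

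The hard part will be this last restriction formula: it asserts that the value at the adjacent fixed point $\lambda^i$ of the Schubert class attached to the covered symbol $\lambda^j$ is obtained from the diagonal product $w\widetilde{S}_{\lambda^i}|_{\lambda^i}$ by deleting precisely the factor indexed by $\lambda^j$. This is the weighted analogue of the classical one-step localization relation for Schubert classes on Grassmannians, and I would prove it by the inductive GKM argument of Knutson--Tao invoked after Proposition \ref{prop_schub_bas}: reconstruct $w\widetilde{S}_{\lambda^j}$ from its prescribed diagonal value $w\widetilde{S}_{\lambda^j}|_{\lambda^j}$ using the edge-divisibility conditions of Theorem \ref{prop_GKM-description_Z} (each edge weight $e_{T^n}(\xi^{ab})$ divides the difference of restrictions at its endpoints) together with the uniqueness and degree constraints of Definition \ref{def_schub_class}, and read off the value at the covering symbol $\lambda^i$. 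The combinatorial input is that the edges out of $\lambda^i$ are indexed by $R(\lambda^i)$ with weights $Y_{\lambda^t}-\frac{c_t}{c_i}Y_{\lambda^i}$, so dropping the single edge toward $\lambda^j$ produces exactly the claimed product; the delicate point is to check that the divisibility and degree conditions force this precise value rather than merely a proportional one, and throughout divisiveness is what keeps the coefficients $\frac{c_t}{c_i}$ and $\frac{c_0}{c_j}$ integral.
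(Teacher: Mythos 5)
Your proposal follows the paper's own proof essentially step for step: the same use of Lemma \ref{lem_st_con} to cut the expansion down to $\lambda^j$ and its covers, the same identification $wc^{j}_{1j}={w\widetilde{S}_{\lambda^1}}|_{\lambda^j}$, and the same localization at a cover $\lambda^i$ combined with the divisor-difference identity following Lemma \ref{lem_sch_div_cls}. The only place you go beyond the paper is in isolating the one-step restriction identity $w\widetilde{S}_{\lambda^j}|_{\lambda^i}=\prod_{\lambda^t\in R(\lambda^i)\setminus\{\lambda^j\}}\bigl(Y_{\lambda^t}-\tfrac{c_t}{c_i}Y_{\lambda^i}\bigr)$ as the real content of the last step --- the paper passes from $(wc^{i}_{1j})\,w\widetilde{S}_{\lambda^i}|_{\lambda^i}=\tfrac{c_0}{c_j}\bigl(Y_{\lambda^j}-\tfrac{c_j}{c_i}Y_{\lambda^i}\bigr)\,w\widetilde{S}_{\lambda^j}|_{\lambda^i}$ directly to $wc^{i}_{1j}=\tfrac{c_0}{c_j}$, which silently uses exactly this identity --- and while your GKM-induction argument for it is only sketched, correctly flagging it as the delicate point is an improvement on the paper's silence rather than a deviation from its method.
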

\begin{proof}
	Using the fact that deg$(w\widetilde{S}_{\lambda^1})=1$, we have
$$w\widetilde{S}_{\lambda^1}~w\widetilde{S}_{\lambda^j}=(wc^{j}_{1j})~w\widetilde{S}_{\lambda^j}+ \sum_{\lambda^i\to\lambda^j}(wc^{i}_{1j})~w\widetilde{S}_{\lambda^i}. $$
	From Lemma \ref{lem_st_con}, we get $wc^{j}_{1j}={w\widetilde{S}_{\lambda^1}}|_{\lambda^j}$. 
Fix $\lambda^i$ such that $\lambda^i\to\lambda^j$ and compare $\lambda^i$-th component of both side we get
	\begin{align*}
	{w\widetilde{S}_{\lambda^1}}|_{\lambda^i}~{w\widetilde{S}_{\lambda^j}}|_{\lambda^i}&=(wc^{j}_{1j})~{w\widetilde{S}_{\lambda^j}}|_{\lambda^i}+(wc^{i}_{1j})~{w\widetilde{S}_{\lambda^i}}|_{\lambda^i}\\  
	\implies(wc^{i}_{1j})~{w\widetilde{S}_{\lambda^i}}|_{\lambda^i} &=({w\widetilde{S}_{\lambda^1}}|_{\lambda^i}-{w\widetilde{S}_{\lambda^1}}|_{\lambda^j})~{w\widetilde{S}_{\lambda^j}}|_{\lambda^i} \\ 
		\implies (wc^{i}_{1j})~{w\widetilde{S}_{\lambda^i}}|_{\lambda^i} &=\dfrac{c_0}{c_j}~(Y_{\lambda^j}-\dfrac{c_j}{c_i}Y_{\lambda^i})~ {w\widetilde{S}_{\lambda^j}}|_{\lambda^i}.
	\end{align*}
Thus $wc^{i}_{1j}=\dfrac{c_0}{c_j}$, if $\lambda^i\to\lambda^j$. Hence we get the proof.
\end{proof}

 By applying Proposition $\ref{prop_pieri_rull}$ repeatedly we can compute the following as well as the higher products.
 \begin{align*}
 	(w\widetilde{S}_{\lambda^1})^{2}~w\widetilde{S}_{\lambda^j}
 	&=w\widetilde{S}_{\lambda^1}~(({w\widetilde{S}_{\lambda^1}}|_{\lambda^j})~w\widetilde{S}_{\lambda^j}+\sum_{\lambda^i\to\lambda^j}\dfrac{c_0}{c_j}~w\widetilde{S}_{\lambda^i})\\
 	&=({w\widetilde{S}_{\lambda^1}}|_{\lambda^j})^{2}~w\widetilde{S}_{\lambda^j}+\sum_{\lambda^i\to\lambda^j}({w\widetilde{S}_{\lambda^1}}|_{\lambda^j})~\dfrac{c_0}{c_j}~w\widetilde{S}_{\lambda^i}\\
&\quad +\sum_{\lambda^i\to\lambda^j}\dfrac{c_0}{c_j}~({w\widetilde{S}_{\lambda^1}}|_{\lambda^i})~w\widetilde{S}_{\lambda^i}
 +\sum_{\lambda^k\to\lambda^i\to \lambda^j}\dfrac{c_0}{c_j}\dfrac{c_0}{c_i}~w\widetilde{S}_{\lambda^k}.
\end{align*}

\begin{proposition}\label{prop_pieri_rull2}
	For any three Schubert symbols $\lambda^i,\lambda^j \text{ and }\lambda^k$, we have the following recurrence relation $$({w\widetilde{S}_{\lambda^1}}|_{\lambda^k}-{w\widetilde{S}_{\lambda^1}}|_{\lambda^i})wc_{ij}^{k}=(\sum _{\lambda^s\to\lambda^i}\dfrac{c_0}{c_i}wc_{sj}^{k}-\sum _{\lambda^k\to \lambda^t}\dfrac{c_0}{c_t}wc_{ij}^{t}).$$
\end{proposition}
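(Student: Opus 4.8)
The plan is to exploit associativity of the cup product in $H^*_{T^n}(\WG(d,n);\ZZ)$, computing the triple product $w\widetilde{S}_{\lambda^1}\,w\widetilde{S}_{\lambda^i}\,w\widetilde{S}_{\lambda^j}$ in two different bracketings and comparing the coefficient of $w\widetilde{S}_{\lambda^k}$ in each expansion. Throughout I would use that $\{w\widetilde{S}_{\lambda^i}\}_{i=0}^m$ is an $H_{T^n}^*(pt;\ZZ)$-basis (Proposition \ref{prop_schub_bas}), so that comparing coefficients of a fixed basis element is legitimate, the defining relation \eqref{eq_wei_st_con} of the weighted structure constants, and the Weighted Pieri rule of Proposition \ref{prop_pieri_rull}, which applies since $w\widetilde{S}_{\lambda^1}$ has degree one.

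First I would bracket the product as $(w\widetilde{S}_{\lambda^1}\,w\widetilde{S}_{\lambda^i})\,w\widetilde{S}_{\lambda^j}$. Applying Proposition \ref{prop_pieri_rull} to $w\widetilde{S}_{\lambda^1}\,w\widetilde{S}_{\lambda^i}$ gives $(w\widetilde{S}_{\lambda^1}|_{\lambda^i})\,w\widetilde{S}_{\lambda^i}+\sum_{\lambda^s\to\lambda^i}\frac{c_0}{c_i}\,w\widetilde{S}_{\lambda^s}$. Multiplying on the right by $w\widetilde{S}_{\lambda^j}$ and expanding each $w\widetilde{S}_{\lambda^i}\,w\widetilde{S}_{\lambda^j}$ and $w\widetilde{S}_{\lambda^s}\,w\widetilde{S}_{\lambda^j}$ via \eqref{eq_wei_st_con}, the coefficient of $w\widetilde{S}_{\lambda^k}$ is $(w\widetilde{S}_{\lambda^1}|_{\lambda^i})\,wc_{ij}^k+\sum_{\lambda^s\to\lambda^i}\frac{c_0}{c_i}\,wc_{sj}^k$.

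Next I would bracket the same product as $w\widetilde{S}_{\lambda^1}\,(w\widetilde{S}_{\lambda^i}\,w\widetilde{S}_{\lambda^j})$. Expanding the inner product by \eqref{eq_wei_st_con} as $\sum_t wc_{ij}^t\,w\widetilde{S}_{\lambda^t}$ and applying the Pieri rule to each $w\widetilde{S}_{\lambda^1}\,w\widetilde{S}_{\lambda^t}$, exactly two kinds of terms contribute a copy of $w\widetilde{S}_{\lambda^k}$: the diagonal term coming from $t=k$, contributing $wc_{ij}^k\,(w\widetilde{S}_{\lambda^1}|_{\lambda^k})$, and the raising terms coming from those $t$ with $\lambda^k\to\lambda^t$, each contributing $\frac{c_0}{c_t}\,wc_{ij}^t$. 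Hence the coefficient of $w\widetilde{S}_{\lambda^k}$ in this bracketing equals $wc_{ij}^k\,(w\widetilde{S}_{\lambda^1}|_{\lambda^k})+\sum_{\lambda^k\to\lambda^t}\frac{c_0}{c_t}\,wc_{ij}^t$.

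Finally, associativity forces the two coefficients to coincide, giving
\[
(w\widetilde{S}_{\lambda^1}|_{\lambda^i})\,wc_{ij}^k+\sum_{\lambda^s\to\lambda^i}\tfrac{c_0}{c_i}\,wc_{sj}^k
= wc_{ij}^k\,(w\widetilde{S}_{\lambda^1}|_{\lambda^k})+\sum_{\lambda^k\to\lambda^t}\tfrac{c_0}{c_t}\,wc_{ij}^t,
\]
and transposing the two diagonal terms yields precisely the asserted recurrence. The step demanding the most care is the bookkeeping in the second bracketing: one must read off correctly which indices $t$ send a summand to $w\widetilde{S}_{\lambda^k}$ under the Pieri rule, i.e. recognize that it is the covering relation $\lambda^k\to\lambda^t$ (and not $\lambda^t\to\lambda^k$) that selects the contributing terms, keeping the convention $\ell(\lambda^k)=\ell(\lambda^t)+1$, $\lambda^t\preceq\lambda^k$ consistent on both sides. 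Everything else is direct substitution; the sums are finite and each structure constant lies in $H_{T^n}^*(pt;\ZZ)$, so no convergence or degree subtleties intervene.
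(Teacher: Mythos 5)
Your proposal is correct and follows essentially the same route as the paper: expand $w\widetilde{S}_{\lambda^1}\,w\widetilde{S}_{\lambda^i}\,w\widetilde{S}_{\lambda^j}$ in the two bracketings via the Weighted Pieri rule and the defining relation of the structure constants, then compare the coefficient of $w\widetilde{S}_{\lambda^k}$, which is legitimate by Proposition \ref{prop_schub_bas}. Your bookkeeping of which summands contribute (the diagonal term $t=k$ and the covering terms $\lambda^k\to\lambda^t$) matches the paper's computation exactly.
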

\begin{proof}
	We use the associativity of the multiplication in $H_{T^n}^{*}(\WG (d,n);\ZZ)$ and weighted Pieri rule to expand $w\widetilde{S}_{\lambda^1}w\widetilde{S}_{\lambda^i}w\widetilde{S}_{\lambda^j}$ in two different ways.
 \begin{align}\label{1st_eq}
(w\widetilde{S}_{\lambda^1}w\widetilde{S}_{\lambda^i})w\widetilde{S}_{\lambda^j} 
&=(({w\widetilde{S}_{\lambda^1}}|_{\lambda^i})w\widetilde{S}_{\lambda^i}+\sum_{\lambda^{s}\to \lambda^i}\dfrac{c_0}{c_i}w\widetilde{S}_{\lambda^s})w\widetilde{S}_{\lambda^j}\\
&=({w\widetilde{S}_{\lambda^1}}|_{\lambda^i})\sum_{\lambda^l}wc_{ij}^{l}w\widetilde{S}_{\lambda^l}+\sum_{\lambda^s\to \lambda^i}\dfrac{c_0}{c_i}\sum_{\lambda^l}wc_{sj}^{l}w\widetilde{S}_{\lambda^l}.\nonumber
\end{align}	

 \begin{align}\label{2nd_eq}
w\widetilde{S}_{\lambda^1}(w\widetilde{S}_{\lambda^i}w\widetilde{S}_{\lambda^j}) &=w\widetilde{S}_{\lambda^1}\sum_{\lambda^l}wc_{ij}^{l}w\widetilde{S}_{\lambda^l}\\
&=\sum_{\lambda^l}wc_{ij}^{l}(({w\widetilde{S}_{\lambda^1}}|_{\lambda^l})w\widetilde{S}_{\lambda^l}+\sum_{\lambda^r\to \lambda^l}\dfrac{c_0}{c_l}w\widetilde{S}_{\lambda^r}).\nonumber
\end{align}	
	Comparing the coefficient of $w\widetilde{S}_{\lambda^k}$ in \eqref{1st_eq} and \eqref{2nd_eq} we get
	$$({w\widetilde{S}_{\lambda^1}}|_{\lambda^i})wc_{ij}^{k}+\sum _{\lambda^s\to\lambda^i}\dfrac{c_0}{c_i}wc_{sj}^{k}=wc_{ij}^{k}({w\widetilde{S}_{\lambda^1}}|_{\lambda^k})+\sum _{\lambda^k\to \lambda^t}\dfrac{c_0}{c_t}wc_{ij}^{t}.$$
\end{proof}

\vspace{1cm} 

 \noindent  {\bf Acknowledgment.} 
 The first author thanks `Indian Institute of Technology Madras' for PhD fellowship. The second author thanks `Indian Institute of Technology Madras' for SEED research grant.

\bibliographystyle{amsalpha}
\bibliography{ref-Wedge1}

\end{document}